\documentclass[11pt]{article}

\usepackage[a4paper,margin=1in]{geometry}
\usepackage{thmtools}
\usepackage{amsmath,amssymb,amsthm,mathtools,mathrsfs}
\usepackage{enumitem}
\usepackage{microtype}
\usepackage[colorlinks=true,linkcolor=blue,citecolor=blue,urlcolor=blue]{hyperref}

\numberwithin{equation}{section}

\newtheorem{theorem}{Theorem}[section]

\newtheorem{conjecture}{Conjecture}
\newtheorem{proposition}[theorem]{Proposition}
\newtheorem{lemma}[theorem]{Lemma}
\newtheorem{corollary}[theorem]{Corollary}
\newtheorem{definition}[theorem]{Definition}
\newtheorem{remark}{Remark}[section]
\newtheorem{example}{Example}

\newcommand{\R}{\mathbb R}
\newcommand{\M}{\mathcal M}
\newcommand{\E}{\mathcal E}
\newcommand{\Per}{\mathcal P_{\rm er}}
\newcommand{\Lip}{\operatorname{Lip}}
\newcommand{\supp}{\operatorname{supp}}

\newcommand{\cL}{\mathscr L}
\newcommand{\cZ}{\mathscr Z}

\newcommand{\Max}{\M_{\max}}
\newcommand{\dist}{\operatorname{dist}}
\newcommand{\cl}{\operatorname{cl}}
\newcommand{\interior}{\operatorname{int}}

\newcommand{\cA}{\mathcal A}

\newcommand{\cS}{\mathcal S}
\newcommand{\cE}{\mathcal E}
\newcommand{\cAA}{\mathcal{AA}}
\newcommand{\gZ}{\mathcal Z}
\newcommand{\gPer}{\operatorname{Per}}

\makeatletter
\let\aa@address\@empty
\let\aa@emails\@empty
\newcommand{\address}[2][]{\gdef\aa@address{#2}}
\newcommand{\email}[1]{%
  \g@addto@macro\aa@emails{\noindent\mbox{\href{mailto:#1}{\texttt{#1}}}\par}}
\newcommand{\printauthorcontact}{%
  \par\bigskip
  \noindent\textit{Author address: }\textsuperscript{1}\,\aa@address\par
  \smallskip
  \noindent\textit{E-mail addresses:}\par
  \aa@emails}
\newcommand{\keywords}[1]{%
  \begingroup
  \renewcommand{\thefootnote}{}
  \begin{NoHyper}
  \footnotetext{\textit{Key words and phrases:} #1}
  \end{NoHyper}
  \endgroup}
\makeatother

\title{Maximizing Families and Typical Periodic Optimization\\
for Almost Additive Sequences}
\author{XiaoYu Zhang$^1$, Ercai Chen$^1$ and 
Xiaoyao Zhou$^1$\thanks{Corresponding author. E-mail address: \href{mailto:zhouxiaoyaodeyouxian@126.com}{\nolinkurl{zhouxiaoyaodeyouxian@126.com}}.}}
\address
{School of Mathematical Sciences and Institute of Mathematics,
Key Laboratory of NSLSCS, Ministry of Education, Nanjing Normal University,
Nanjing 210023, Jiangsu, P.R. China}
\email{zxiaoyu202405@163.com}
\email{ecchen@njnu.edu.cn}
\email{zhouxiaoyaodeyouxian@126.com}

\date{}
\begin{document}

\maketitle

\keywords{Almost additive potentials;  typical periodic optimization; maximizing families; orbit Lipschitz sequences.}

\begin{abstract}
In this paper, we study typical periodic optimization (TPO) for almost
additive potentials in two perturbation spaces. Our main setting is the
Banach quotient $\mathcal E_{\rm orb}(X,T)$ of orbit-Lipschitz almost
additive potentials, where we extend the theory of maximizable sets and countable maximizable
families developed by W. Huang, O. Jenkinson, L. Xu and Y. Zhang
[Typical periodic optimization for dynamical systems: symbolic dynamics,
Invent. Math. 245 (2026), 1--63], and establish a global structural theorem.	
For a countable maximizable family, global TPO holds if every non boundary
member has $X$-extendable TPO and the boundary region has empty interior. 
As an application, we construct a compact system with global TPO for which
$\mathcal E_{\rm orb}(X,T)$ is infinite-dimensional and the maximizing
periods in open locking regions are unbounded. For a fixed almost additive
potential $\Phi$, we also develop relative TPO theory on its Lipschitz leaf. When $\Phi=0$, this framework reduces
to classical Lipschitz TPO. We prove the corresponding
leafwise structural theorem and give a non additive rank-one matrix example
on a full shift.
\end{abstract}

\section{Introduction}
Let $(X,T)$ be a topological dynamical system, where $X$ is a compact metric
space and $T:X\to X$ is continuous. Let $C(X,\mathbb R)$ be 
the Banach space of real-valued continuous functions on $X$, equipped
with the uniform norm
$
\|\cdot\|_\infty.
$ Denote by $\M(X,T)$   the set of
$T$-invariant Borel probability measures.  For any  $f\in C(X,\mathbb{R})$, the maximum
ergodic average is
$$
\beta(f):=\max_{\mu\in\M(X,T)}\int_X f\,d\mu,
$$
and the set of maximizing measures is
$$
\Max(f):=
\left\{
\mu\in\M(X,T):\int_X f\,d\mu=\beta(f)
\right\}.
$$
Ergodic optimization studies the  measures in $\Max(f)$ and their relation to
the dynamics of $T$.  Basic questions concern uniqueness, support, entropy,
and periodicity of maximizing measures; see
\cite{Jenkinson2006Survey,Jenkinson2019Survey,Garibaldi2017,Bochi2018}. For several classical
chaotic systems, maximizing measures of typical regular observables 
often have low dynamical complexity. In particular, they are often
supported on periodic orbits. This phenomenon leads to the periodic
optimization problem.

We say that $f$ has the \emph{periodic optimization property} if 
$
\Max(f)=\{\mu_Q\}
$ 
for some periodic orbit $Q$, where $\mu_Q$ denotes the invariant probability
measure supported on $Q$. Although periodic measures form a topologically
small subset of $\M(X,T)$, Hunt and
Ott \cite{HuntOtt1996PRL,HuntOtt1996PRE},  first pointed out that, for
chaotic systems,  that regular functions (e.g. smooth, or Lipschitz) often \emph{typically}
have this property. 
Hunt and Ott formulated this phenomenon in a probabilistic form. Yuan and
Hunt later proposed the following topological conjecture.
\begin{conjecture}[{\cite{YuanHunt1999}}]
	Let $(X,T)$ be an Axiom~A or uniformly expanding system. Then a
	topologically typical smooth function admits a maximizing measure supported
	on a periodic orbit.
\end{conjecture}

Progress towards these conjecture was made in several settings.
Bousch \cite{Bousch2000,Bousch2001,Bousch2008} developed calibrated
subactions for expanding maps and shift spaces. Contreras, Lopes and
Thieullen \cite{ContrerasLopesThieullen2001} studied minimizing measures
for expanding circle maps. Morris \cite{Morris2008,Morris2009} obtained
generic zero-entropy results for H\"older maximizing measures and a
Ma\~n\'e cohomology lemma for intermittent maps. Quas and Siefken
\cite{QuasSiefken2012} studied periodic optimization for supercontinuous
functions on shift spaces. A major breakthrough was made by Contreras \cite{Contreras2016}.
Building on earlier advances
\cite{ContrerasLopesThieullen2001,QuasSiefken2012,YuanHunt1999},
he proved \emph{typical periodic optimization} (TPO) for open
distance-expanding maps: there exists an open and dense subset
$$
\mathcal P\subset\Lip(X)
$$
such that every $f\in\mathcal P$ has the periodic optimization
property. Huang, Lian,
Ma, Xu and Zhang \cite{HuangLianMaXuZhang} later extended this result to
uniformly hyperbolic systems. Li and Zhang \cite{LiZhang2025} obtained a
related result for expanding Thurston maps. A common feature of these
approaches is a combination of a closing lemma, shadowing, and
perturbations based on a Ma\~n\'e cohomology lemma.

Huang, Jenkinson, Xu and Zhang
\cite{HuangJenkinsonXuZhang2026} recently developed the theory of
maximizable families. This theory was developed to study
TPO for weakly hyperbolic
systems in which the Ma\~n\'e cohomology lemma fails. It reduces the
periodic optimization problem to periodic optimization on the members
of a countable maximizable family. In particular, it proves TPO for
many symbolic systems, including all sofic shifts and all $S$-gap
shifts.

For comparison, consider $C(X,\mathbb R)$ endowed with the uniform
norm. The uniform topology induced on $\Lip(X)$ is weaker than the
Lipschitz topology. Jenkinson \cite{Jenkinson2006Unique} showed that
an ergodic measure can be realized as the unique maximizing measure
of a   continuous function, while Morris \cite{Morris2010}
proved a generic uniqueness result for continuous potentials. These
results concern uniqueness rather than TPO. Indeed, generic
uniqueness in the uniform topology does not provide an open and dense
set in the Lipschitz topology, and it does not ensure that the unique
maximizing measure is periodic. Thus it does not imply TPO on
$\Lip(X)$. This difference is particularly important for non-additive
potentials, where the ambient space and its norm must be specified
before  ``open'', ``dense'', and ``typical'' have a definite
meaning.

Many quantities arising in non-conformal dynamics and matrix products cannot
be represented by the Birkhoff sums of a single continuous function. Instead,
they are described by a sequence
$
\Phi=\{\phi_n\}_{n\geq1}\subset C(X,\mathbb R),
$
which need not satisfy the additive relation
$
\phi_{n+m}=\phi_n+\phi_m\circ T^n.
$
The first general form of non-additive thermodynamic formalism was introduced
by Falconer \cite{Falconer1988} for subadditive potentials arising from
non-conformal repellers.  Barreira \cite{Barreira1996} developed a broader
non-additive formalism and applied it to dimension theory.  The almost
additive formalism was then developed by Barreira \cite{Barreira2006} and
Mummert \cite{Mummert2006}; see also the survey
\cite{Barreira2010}.  Earlier matrix examples were studied by Feng and Lau
\cite{FengLau2002}.  The thermodynamic formalism for subadditive potentials
was further developed by Cao, Feng and Huang \cite{CaoFengHuang2008}.
Feng and Huang \cite{FengHuang2010} introduced asymptotically additive
potentials in their study of Lyapunov spectra.

An almost additive sequence $
\Phi=\{\phi_n\}_{n\geq1}\subset C(X,\mathbb{R})$  satisfies
$$
\left\|
\phi_{n+m}-\phi_n-\phi_m\circ T^n
\right\|_\infty\leq C_\Phi
$$
for all $n,m\geq1$.  A sequence is asymptotically additive if it can be approximated by Birkhoff sums of continuous functions with sublinear error, and every almost additive sequence is asymptotically additive. The set of asymptotically additive sequences of observables is much larger than
those formed by sequences of Birkhoff averages. Yet, Cuneo \cite{Cuneo2020} showed that every asymptotically additive sequence can be approximated by the Birkhoff sums of some $g\in C(X,\mathbb{R})$ with sublinear error:
$$
\lim_{n\to\infty}\frac1n\|\phi_n-S_ng\|_\infty=0,
$$ 
where $S_ng:=\sum_{j=0}^{n-1}g(T^jx)$. 
This result reduces many continuous non-additive questions to the additive
case.  

Along with the development of non-additive thermodynamic formalism,
ergodic optimization for non-additive potentials has also received
considerable attention.  Morris
\cite{Morris2013} considered Mather sets for matrix sequences.  Garibaldi and
Gomes \cite{GaribaldiGomes2016} studied Aubry sets for asymptotically
subadditive potentials.  Zhao \cite{Zhao2019} studied constrained ergodic
optimization for asymptotically additive potentials.  Bomfim, Huo, Varandas
and Zhao \cite{BomfimHuoVarandasZhao2023} introduced a Banach quotient
$\cS$ of asymptotically additive potentials.  They proved generic uniqueness,
and obtained generic results on entropy and support. 

This leads to a natural question: how should TPO be defined for almost
additive potentials? Since TPO concerns open and dense sets, one must
first choose a perturbation space and a norm. In this paper we use two
spaces with different roles.
The first is the global quotient $\cE_{\rm orb}$.  TPO on $\cE_{\rm orb}$ is a global non-additive
statement and is the main form of TPO studied in this paper.

Let $\cA(X,T)$ be the Banach space of asymptotically additive sequences with
the strong sequence norm $\|\cdot\|_{\mathcal A}$.  For
$\Phi=\{\phi_n\}_{n\geq1}$, let $D(\Phi)$ be its almost additivity bound and
let $L_{\rm orb}(\Phi)$ be its orbit Lipschitz seminorm.  We consider
$$
\mathcal{AA}_{\rm orb}(X,T)
:=
\left\{
\Phi\in\cA(X,T):D(\Phi)<\infty,
L_{\rm orb}(\Phi)<\infty
\right\}
$$
with the norm
$$
\|\Phi\|_{\mathcal{AA}_{\rm orb}}
:=
\|\Phi\|_{\cA}+D(\Phi)+L_{\rm orb}(\Phi).
$$
Let $\mathcal N$ be the space of uniformly sublinear sequences and define
$$
\cE_{\rm orb}(X,T)
:=
\mathcal{AA}_{\rm orb}(X,T)
\big/
\bigl(\mathcal{AA}_{\rm orb}(X,T)\cap\mathcal N\bigr).
$$
If $\eta=[\Phi]\in\cE_{\rm orb}(X,T)$ and
$\mu\in\M(X,T)$, then
$
\eta_*(\mu)
:=
\Phi_*(\mu)
=
\lim_{n\to\infty}\frac1n\int_X\phi_n\,d\mu
$
is well defined.  We write
$
\Max(\eta)
:=
\left\{
\mu\in\M(X,T):
\eta_*(\mu)=\max_{\nu\in\M(X,T)}\eta_*(\nu)
\right\}.
$

Our first result gives the global Banach space used in the paper.
\begin{restatable}
	{introtheorem}{globalspace}
	\label{thm:intro-global-space}
	The space
	\((\mathcal{AA}_{\rm orb}(X,T),
	\|\cdot\|_{\mathcal{AA}_{\rm orb}})\) is a Banach space, and it is continuously and densely embedded into
	\((\mathcal A(X,T),\|\cdot\|_{\mathcal A})\).  In
	particular,
	\begin{equation*}\label{eq:orb-dense-A}
		\overline{\mathcal{AA}_{\rm orb}(X,T)}^{\,
			\|\cdot\|_{\mathcal A}}
		=\mathcal A(X,T).
	\end{equation*}
	The quotient
	$\cE_{\rm orb}(X,T)$ is Banach, and
	its natural map into the asymptotic quotient $\cS$ is continuous and
	injective, with dense image.
\end{restatable}

We say that $(X,T)$ has \emph{global TPO} if an open dense subset of
$\cE_{\rm orb}(X,T)$ consists of classes with a unique periodic maximizing
measure.  The global structural theorem uses countable maximizable families.  For a
closed invariant set $Z\subset X$, put
$$
\cE_Z
:=
\{\eta\in\cE_{\rm orb}(X,T):
\Max(\eta)\cap\M(Z,T)\neq\varnothing\}.
$$
A countable family $\{Z_i\}$ is $\cE_{\rm orb}$-maximizable if
$$
\cE_{\rm orb}(X,T)=\bigcup_i\cE_{Z_i}.
$$
For a subsystem $Z\subset X$, the restriction of global classes gives the
extendable space $\cE_{\rm orb}^{X}(Z,T)$.  TPO in this space is called
$X$-extendable TPO. We give the following Global structural theorem.
\begin{restatable}
		{introtheorem}{globalstructural}
		\label{thm:intro-global-structural}
	Let
	$
	\mathcal Z=\{Z_0\}\cup\{Z_i:i\geq1\}
	$
	be a countable $\cE_{\rm orb}(X,T)$-maximizable family.  Suppose that
	$(Z_i,T)$ has $X$-extendable TPO for every $i\geq1$.  Let
	$$
	\mathcal P
	:=
	\interior
	\{\eta\in\cE_{\rm orb}(X,T):
	\Max(\eta)=\{\mu_Q\}
	\text{ for some periodic orbit }Q\}.
	$$
	Then
	$$
	\mathcal P\cup\interior(\cE_{Z_0})
	$$
	is open and dense in $\cE_{\rm orb}(X,T)$.  In particular, if
	$
	\interior(\cE_{Z_0})=\varnothing,
	$
	then $(X,T)$ has global TPO.
\end{restatable}

The global theory is not only formal.  We give an infinite-dimensional example
with global TPO.  It is built from a countable family of periodic orbits whose
periods tend to infinity.

\begin{example}
	\label{thm:intro-bouquet}
	There exists a compact metric dynamical system $(X,T)$ with periodic orbits
	$
	Z_0,Z_1,Z_2,\ldots,
	$
	where $Z_0$ is a fixed point and $Z_m$ has prime period $m+1$ for
	$m\geq1$, such that the following statements hold:
	\begin{enumerate}[label=\textup{(\roman*)}]
		\item Every invariant measure has a unique form
		$$
		\mu=\sum_{m=0}^{\infty}w_m\mu_m,
		\qquad
		w_m\geq0,
		\qquad
		\sum_{m=0}^{\infty}w_m=1,
		$$
		where $\mu_m$ is the periodic measure on $Z_m$.
		\item The family $\{Z_m:m\geq0\}$ is a countable
		$\cE_{\rm orb}$-maximizable family.
		\item The set
		$$
		\bigcup_{m=0}^{\infty}\interior(\cE_{Z_m})
		$$
		is open and dense in $\cE_{\rm orb}(X,T)$.  Every class in this set has a
		unique periodic maximizing measure.
		\item For every $m\geq0$, there is a non-empty open set on which $\mu_m$
		is the unique maximizing measure.  Hence the periods in these open sets are
		not bounded.
		\item The Banach space $\cE_{\rm orb}(X,T)$ is infinite-dimensional.
		Moreover, for every $m\geq0$ there is an orbit Lipschitz almost additive
		sequence which is not additive and whose class has $\mu_m$ as a locked
		unique maximizing measure.
	\end{enumerate}
\end{example}

The second space is a fixed Lipschitz leaf.  For a fixed almost additive
sequence $\Phi$, set
$
\cL_\Phi(X)
:=
\{\Phi+S_\bullet f:f\in\Lip(X)\}.
$
The leaf is identified with $\Lip(X)$.  Only the additive Lipschitz part
$S_\bullet f$ varies.  This gives a relative TPO theory for the fixed
 $\Phi$.     If
$\Phi=0$, relative TPO is exactly classical TPO.  A leafwise result is not a
global result in $\cE_{\rm orb}$, since openness and density are taken in
different spaces.

We now fix an almost additive sequence
$\Phi=\{\phi_n\}_{n\geq1}$ and  $f\in\Lip(X)$.  Set 
$$
\Phi^f:=\Phi+S_\bullet f. 
$$
We say that $(X,T)$ has \emph{relative $\Phi$-TPO} if there is an open dense
set $\mathcal P^\Phi\subset\Lip(X)$ such that
$$
\Max(X,T,\Phi^f)=\{\mu_Q\}
$$
for every $f\in\mathcal P^\Phi$ and some periodic orbit $Q$ depending on
$f$.  The topology is the Lipschitz topology of the fixed leaf
$\cL_\Phi(X)$.

\begin{restatable}
	{introtheorem}{leafwisestructural}
	\label{thm:intro-leafwise-structural}
	Let
	$
	\mathcal Z=\{Z_0\}\cup\{Z_i:i\geq1\}
	$
	be a countable $\Phi$-leafwise maximizable family.  Suppose that
	$(Z_i,T)$ has relative $(\Phi|_{Z_i})$-TPO for every $i\geq1$.  Then
	$$
	\mathcal P^\Phi\cup\interior(\Lip_{Z_0}^\Phi)
	$$
	is open and dense in $\Lip(X)$.  Hence $(X,T)$ has relative $\Phi$-TPO if
	$$
	\interior(\Lip_{Z_0}^\Phi)=\varnothing.
	$$
	The same conclusion holds when the family has no boundary set $Z_0$.
\end{restatable}

The leafwise subsystem reduction uses the McShane extension theorem.  A small
Lipschitz perturbation on $Z$ extends to a small Lipschitz perturbation on
$X$.  
If there exists $f_0\in\Lip(Z)$ such that
$$
\lim_{n\to\infty}\frac1n
\|\phi_n|_Z-S_nf_0\|_{\infty,Z}=0,
$$
then relative $(\Phi|_Z)$-TPO is the same as classical Lipschitz TPO on
$Z$, after the translation $f\mapsto f_0+f$.  This gives a direct way to
obtain leafwise examples from classical ones.

\begin{example}
	\label{thm:intro-matrix}
	Let $(\Sigma_k,\sigma)$ be the one-sided full shift on $k\geq2$ symbols.
	Let $C=(c_{ij})_{1\leq i,j\leq k}$ be a positive matrix.  Put
	$$
	u_i=e_i,
	\qquad
	v_i=(c_{i1},\ldots,c_{ik})^{\mathsf T},
	\qquad
	A_i=u_iv_i^{\mathsf T}.
	$$
	For $x=(x_j)_{j\geq0}\in\Sigma_k$, define
	$$
	A^{(n)}(x):=A_{x_{n-1}}\cdots A_{x_0},
	\qquad
	\phi_n^C(x):=\log\|A^{(n)}(x)\|,
	$$
	and write $\Phi_C=\{\phi_n^C\}_{n\geq1}$.  Then $\Phi_C$ is orbit
	Lipschitz and almost additive, but it is not additive.  Moreover,
	$(\Sigma_k,\sigma)$ has relative $\Phi_C$-TPO.  Hence there exists an open
	dense set $G\subset\Lip(\Sigma_k)$ such that, for every
	$f\in G$,
	$$
	\Max(\Sigma_k,\sigma,\Phi_C+S_\bullet f)=\{\mu_Q\}
	$$
	for some periodic orbit $Q$.
\end{example}

The paper is arranged as follows.  We first introduce almost additive
potentials, maximizing measures, Lipschitz leaves, and the global spaces
$\mathcal{AA}_{\rm orb}$ and $\cE_{\rm orb}$.  We then extend the theory of
maximizable sets, minimax sets, completely maximizing sets, and the subset
closing property.  The global TPO theory is developed next. We then give an infinite-dimensional example of global TPO.  Finally, we develop relative TPO on a
fixed Lipschitz leaf and give the  example.

\section{Preliminary}
In this section we  introduces almost additive potentials and their maximizing
measures,  followed by a comparison of three related spaces:  the
Banach space of asymptotically additive sequences $\mathcal A(X,T)$, its asymptotic
quotient $\mathcal S$, and the  Lipschitz leaf $\cL_\Phi(X)$. We also construct a 
Banach space of orbit Lipschitz almost additive sequences $\mathcal{AA}_{\rm orb}(X,T) $ and its 
quotient space $\mathcal E_{\rm orb}$. These objects support two complementary formulations of
typical periodic optimization developed later in the paper. 

Let $(X,T)$ be a topological dynamical system (TDS), 
The compact convex set of \(T\)-invariant Borel probability measures is denoted by
\(\M(X,T)\), and its set of ergodic measures by \(\E(X,T)\).  The support of a
measure \(\mu\) is denoted by \(\supp\mu\).  If \(Y\subset X\) is closed and
invariant, then
$ \M(Y,T):=\{\mu\in\M(X,T):\supp\mu\subset Y\}.$
Let $\Lip(X)$ be the Banach space of all real-valued Lipschitz functions on $X$, equipped with the norm 
$$
 \|f\|_{\Lip}:=\|f\|_\infty+|f|_{\Lip},
 \qquad
 |f|_{\Lip}:=\sup_{x\neq y}\frac{|f(x)-f(y)|}{d(x,y)}.
$$

\begin{definition}
A sequence \(\Phi=\{\phi_n\}_{n\geq1}\subset C(X,\mathbb{R})\) is \emph{almost additive} if
there exists \(C_\Phi\geq0\) such that, for all \(n,m\geq1\) and \(x\in X\),
\begin{align}\label{eq:aa}
 -C_\Phi+\phi_n(x)+\phi_m(T^nx)
 \leq \phi_{n+m}(x)
 \leq C_\Phi+\phi_n(x)+\phi_m(T^nx).
\end{align}
\end{definition}

For \(\mu\in\M(X,T)\), the  sequence
\(a_n(\mu):=\int\phi_n\,d\mu\) is almost additive.  Applying Fekete's lemma to
\(a_n+C_\Phi\) and to \(-a_n+C_\Phi\) gives the following.

\begin{lemma}\label{lem:functional}
Let $(X,T)$ be a TDS and let
$\Phi=\{\phi_n\}_{n\geq1}\subset C(X,\mathbb R)$
be an almost additive sequence and every \(\mu\in\M(X,T)\), the limit
\begin{equation*}\label{eq:Phi-star}
 \Phi_*(\mu):=\lim_{n\to\infty}\frac1n\int\phi_n\,d\mu
\end{equation*}
exists.  Moreover, for every \(k\geq1\),
\begin{equation}\label{eq:uniform-approx}
 \left|\Phi_*(\mu)-\frac1k\int\phi_k\,d\mu\right|
 \leq \frac{C_\Phi}{k}.
\end{equation}
Consequently, \(\Phi_*:\M(X,T)\to\R\) is continuous and affine.
If \(\mu\) is ergodic, then
\begin{equation*}\label{eq:aa-ergodic}
 \lim_{n\to\infty}\frac1n\phi_n(x)=\Phi_*(\mu)
 \quad\text{for \(\mu\)-a.e. }x\in X.
\end{equation*}
\end{lemma}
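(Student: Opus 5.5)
The plan has four parts: Fekete's lemma applied to the integrated sequence, a continuity argument via the uniform bound, a check of affineness, and a reduction to Kingman's theorem for the pointwise statement.

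\emph{Existence of the limit and the bound.} Fix $\mu\in\M(X,T)$ and set $a_n:=\int\phi_n\,d\mu$. Integrating \eqref{eq:aa} against $\mu$ and using $T$-invariance, $\int\phi_m\circ T^n\,d\mu=a_m$, gives
\[
a_n+a_m-C_\Phi\le a_{n+m}\le a_n+a_m+C_\Phi .
\]
Hence $b_n:=a_n+C_\Phi$ is subadditive and $c_n:=-a_n+C_\Phi$ is subadditive. Fekete's lemma gives
\[
\lim_{n\to\infty} b_n/n=\inf_k b_k/k,\qquad \lim_{n\to\infty} c_n/n=\inf_k c_k/k .
\]
Both limits are finite, because $|a_n|\le n\|\phi_1\|_\infty+(n-1)C_\Phi$ by induction. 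Therefore $\Phi_*(\mu)=\lim a_n/n$ exists. From the first infimum, $\Phi_*(\mu)\le (a_k+C_\Phi)/k$, and from the second, $-\Phi_*(\mu)\le(-a_k+C_\Phi)/k$. Together these yield \eqref{eq:uniform-approx}.

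\emph{Continuity and affineness.} Each map $\mu\mapsto\frac1k\int\phi_k\,d\mu$ is weak$^*$ continuous and affine on $\M(X,T)$, since $\phi_k\in C(X,\R)$. By \eqref{eq:uniform-approx}, these maps converge to $\Phi_*$ uniformly on $\M(X,T)$ with error at most $C_\Phi/k$. A uniform limit of continuous affine maps is continuous and affine.

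\emph{Pointwise convergence for ergodic $\mu$.} I would deduce this from Kingman's subadditive ergodic theorem. The functions $g_n:=\phi_n+C_\Phi$ are continuous, hence $\mu$-integrable, and satisfy
\[
g_{n+m}\le g_n+g_m\circ T^n
\]
by the right inequality of \eqref{eq:aa}. Kingman's theorem then gives $\frac1n g_n\to\inf_n\frac1n\int g_n\,d\mu=\Phi_*(\mu)$ $\mu$-a.e., so $\limsup\frac1n\phi_n\le\Phi_*(\mu)$ and in fact the limit equals $\Phi_*(\mu)$ a.e. Applying the same argument to $h_n:=-\phi_n+C_\Phi$, which is subadditive by the left inequality, gives $\frac1n\phi_n\to\Phi_*(\mu)$ from the other side and confirms the result.

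The only point needing care is that Kingman's limit for ergodic $\mu$ is the constant $\lim\frac1n\int g_n\,d\mu$. This equals $\Phi_*(\mu)$ because $C_\Phi/n\to0$, and it is finite because $\phi_n$ is bounded below by $-n\|\phi_1\|_\infty-(n-1)C_\Phi$. I expect no real obstacle beyond this bookkeeping.
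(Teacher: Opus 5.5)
Your proof is correct. It differs from the paper's mainly in being self-contained.

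The paper obtains existence of $\Phi_*(\mu)$ and the $\mu$-a.e.\ convergence by citing Feng--Huang: an almost additive sequence is asymptotically additive. It then proves \eqref{eq:uniform-approx} by comparing $a_{qk}$ with $qa_k$, namely $|a_{qk}-qa_k|\le (q-1)C_\Phi$, and letting $q\to\infty$. That last step presupposes that the limit already exists.

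Your Fekete argument gives existence and both halves of the bound at once, from the infimum characterizations of $\lim b_n/n$ and $\lim c_n/n$. This is in fact the route announced in the sentence just before the lemma, though the paper's proof does not follow it. Your use of Kingman's theorem proves the pointwise statement directly from the one-sided subadditivity of $\phi_n+C_\Phi$, with no appeal to asymptotically additive theory.

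One small remark: the first application of Kingman already gives the full limit $\frac1n\phi_n\to\Phi_*(\mu)$ a.e. For ergodic $\mu$ the Kingman limit is the constant $\inf_n\frac1n\int g_n\,d\mu=\Phi_*(\mu)$. So the second application to $-\phi_n+C_\Phi$ is redundant, though harmless.

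As for trade-offs, the paper's route is shorter given the citation and fits its later reliance on asymptotically additive machinery such as Cuneo's theorem. Yours needs only classical subadditive tools.
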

\begin{proof}
	Every almost additive sequence is asymptotically additive. Hence the
 	limit for $\Phi_*(\mu)$ and the a.e. pointwise
	convergence above follow from \cite{FengHuang2010}. 
	
	Set
	$
	a_n(\mu):=\int\phi_n\,d\mu.
	$
	For a fixed $k\geq1$, repeated use of almost additivity and the
	$T$-invariance of $\mu$ gives
	$$
	\left|a_{qk}(\mu)-q a_k(\mu)\right|
	\leq(q-1)C_\Phi
	$$
	for every $q\geq1$. Dividing by $qk$ and letting $q\to\infty$ yields
	\eqref{eq:uniform-approx}. 	
	Since this estimate is uniform in $\mu$, the map $\Phi_*$ is the
	uniform limit of the continuous affine maps
	$
	\mu\to\frac1k\int\phi_k\,d\mu.
	$
	Therefore, $\Phi_*$ is continuous and affine.
\end{proof}

We next introduce the maximum ergodic average and maximizing measures for almost additive sequence.
\begin{definition}
Let $(X,T)$ be a TDS and let
$\Phi=\{\phi_n\}_{n\geq1}\subset C(X,\mathbb R)$
be an almost additive sequence , its maximum ergodic average is defined by
\begin{align*}
 \beta(X,T,\Phi)=\beta(\Phi)&:=\sup_{\mu\in\M(X,T)}\Phi_*(\mu)=\max_{\mu\in\M(X,T)}\Phi_*(\mu),\\
 \Max(X,T,\Phi)=\Max(\Phi)&:=\{\mu\in\M(X,T):\Phi_*(\mu)=\beta(\Phi)\}.
\end{align*}
Any $\mu\in \Max(X,T,\Phi)$  is called a maximizing measure for $\Phi$, or a $\Phi$-maximizing measure.
By Lemma~\ref{lem:functional}, the maximum  is attained because $\Phi_*$ is continuous on the weak$^*$ compact space $\mathcal M(X,T)$. Hence $\Max(\Phi)$ is nonempty, compact, and convex.
\end{definition}

\begin{remark}
	We can normalize by  
	$\widehat\phi_n:=\phi_n-n\beta(X,T,\Phi).$
	Then $\widehat\Phi=\{\widehat\phi_n\}_{n\geq 1}$ has the same almost additivity bound and the same
	maximizing measures, while \(\beta(X,T,\widehat\Phi)=0\). Thus, we may assume $\beta(\Phi)=0$ without loss of generality.
\end{remark}

To construct the perturbation space used in the global theory, we first
recall asymptotically additive sequences. 

A sequence
$\Psi=\{\psi_n\}_{n\geq1}\subset C(X,\mathbb{R})$ is called asymptotically additive if,
for every $\xi>0$, there exists $g_\xi\in C(X,\mathbb{R})$ such that
\begin{equation*}
	\limsup_{n\to\infty}
	\frac1n\|\psi_n-S_ng_\xi\|_\infty\leq\xi.
\end{equation*}
We denote by $\mathcal A(X,T)$ the vector space of all asymptotically
additive sequences.

\begin{remark}\label{rem:cuneo}
	Cuneo \cite{Cuneo2020} proved that for any asymptotically additive sequences $\Phi=\{\phi_n\}_n$, there exists \(f\in C(X,\mathbb{R})\) such that
	$
		\lim_{n\to\infty}\frac1n\|\phi_n-S_nf\|_\infty=0.
	$
	Hence \(\Phi_*(\mu)=\int f\,d\mu\) for  \(\mu\in\mathcal{M}(X,T)\). 
\end{remark}

Define the closed subspace
$
\mathcal N
:=\left\{\Psi\in\mathcal A(X,T):
\limsup_{n\to\infty}\frac1n\|\psi_n\|_\infty=0\right\}
$
and the quotient
$
\mathcal S:=\mathcal A(X,T)/\mathcal N.
$
Its norm is
\begin{equation*}\label{eq:S-norm}
	\|[\Psi]\|_{\mathcal S}
	=\limsup_{n\to\infty}\frac1n\|\psi_n\|_\infty.
\end{equation*}
We next introduce the orbit Lipschitz sequences and the space consisting of these sequences.
For \(n\geq1\), $x,y\in X$, set
\begin{equation*}\label{eq:orbit-l1-metric}
	d_n^{(1)}(x,y):=\sum_{j=0}^{n-1}d(T^jx,T^jy).
\end{equation*}
For a sequence \(\Psi=\{\psi_n\}_{n\geq 1}\), define its almost additivity bound and
orbit Lipschitz seminorm by
\begin{align} \label{eq:orb-seminorm}
	D(\Psi)
	:=\sup_{n,m\geq1}
	\|\psi_{n+m}-\psi_n-\psi_m\circ T^n\|_\infty,
	\qquad
	L_{\rm orb}(\Psi)
	:=\sup_{\substack{n\geq1\\x\neq y}}
	\frac{|\psi_n(x)-\psi_n(y)|}{d_n^{(1)}(x,y)}.
\end{align}
For any $x\neq y$, the definition of $d_n^{(1)}$  yields $d_n^{(1)}(x,y)>0.$

We denote by $\mathcal{AA}_{\rm orb}(X,T)$ the space of all orbit
Lipschitz almost additive sequences, namely 
\begin{equation*}\label{eq:AA-orb-space}
	\mathcal{AA}_{\rm orb}(X,T)
	:=\{\Psi\in\mathcal A(X,T):D(\Psi)<\infty,
	L_{\rm orb}(\Psi)<\infty\}.
\end{equation*}
We equip this space with the norm
\begin{equation*}
	\|\Psi\|_{\mathcal{AA}_{\rm orb}}
	:=\|\Psi\|_{\mathcal A}+D(\Psi)+L_{\rm orb}(\Psi).
\end{equation*}

The following result shows that orbit Lipschitz almost additive sequences
form a complete perturbation space and are dense among all asymptotically
additive sequences in the $\|\cdot\|_{\mathcal A}$-norm.
\begin{theorem}
	\label{thm:orb-banach-dense}
	The space
	\((\mathcal{AA}_{\rm orb}(X,T),
	\|\cdot\|_{\mathcal{AA}_{\rm orb}})\) is a Banach space, and it is continuously and densely embedded into
	\((\mathcal A(X,T),\|\cdot\|_{\mathcal A})\).  In
	particular,
	\begin{equation*}\label{eq:orb-dense-A}
		\overline{\mathcal{AA}_{\rm orb}(X,T)}^{\,
			\|\cdot\|_{\mathcal A}}
		=\mathcal A(X,T).
	\end{equation*}
\end{theorem}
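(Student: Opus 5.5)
The plan has two parts: completeness with continuity of the embedding, and density; completeness is handled by a routine limiting argument, while density needs more care. I will use the strong sequence norm of Bomfim--Huo--Varandas--Zhao,
$$\|\Psi\|_{\mathcal A}=\sup_{n\geq1}\tfrac1n\|\psi_n\|_\infty ,$$
and their fact that $(\mathcal A(X,T),\|\cdot\|_{\mathcal A})$ is a Banach space. If the paper's norm is instead another sup-type norm, the argument should adapt verbatim. Continuity of the inclusion is immediate from $\|\Psi\|_{\mathcal A}\leq\|\Psi\|_{\mathcal{AA}_{\rm orb}}$.

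\textbf{Completeness.} Let $(\Psi^{(k)})$ be Cauchy in $\|\cdot\|_{\mathcal{AA}_{\rm orb}}$.
\begin{enumerate}[label=\textup{(\arabic*)}]
\item The sequence is Cauchy in $\mathcal A$, so it converges to some $\Psi\in\mathcal A(X,T)$.
\item Convergence in the sup norm gives $\psi^{(k)}_n\to\psi_n$ uniformly for each fixed $n$.
\item Fix $n,m,x$ and write, for $l\geq k$,
$$\bigl|(\psi^{(k)}_{n+m}-\psi^{(k)}_n-\psi^{(k)}_m\circ T^n)(x)-(\psi^{(l)}_{n+m}-\psi^{(l)}_n-\psi^{(l)}_m\circ T^n)(x)\bigr|\leq D(\Psi^{(k)}-\Psi^{(l)}).$$
\item Let $l\to\infty$ and then take the supremum over $n,m,x$. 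This gives $D(\Psi^{(k)}-\Psi)\leq\limsup_l D(\Psi^{(k)}-\Psi^{(l)})$, which tends to $0$ as $k\to\infty$.
\item The same argument at fixed $n$ and $x\neq y$ gives $L_{\rm orb}(\Psi^{(k)}-\Psi)\to0$.
\end{enumerate}
Hence $D(\Psi)<\infty$, $L_{\rm orb}(\Psi)<\infty$, and $\Psi^{(k)}\to\Psi$ in $\|\cdot\|_{\mathcal{AA}_{\rm orb}}$.

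\textbf{Density.} Fix $\Psi\in\mathcal A(X,T)$ and $\varepsilon>0$.
\begin{enumerate}[label=\textup{(\arabic*)}]
\item By Remark~\ref{rem:cuneo}, choose $g\in C(X,\mathbb R)$ with $\frac1n\|\psi_n-S_ng\|_\infty\to0$.
\item Lipschitz functions are uniformly dense in $C(X,\mathbb R)$ for compact metric $X$, by Stone--Weierstrass or McShane-type inf-convolution. Choose $h\in\Lip(X)$ with $\|g-h\|_\infty<\varepsilon/2$.
\item Then $\frac1n\|S_ng-S_nh\|_\infty<\varepsilon/2$ for all $n$. Hence there is $N$ with $\frac1n\|\psi_n-S_nh\|_\infty<\varepsilon$ for all $n\geq N$.
\item For each of the finitely many $n<N$, choose $r_n\in\Lip(X)$ with $\|\psi_n-S_nh-r_n\|_\infty<n\varepsilon$.
\item Define $\phi_n:=S_nh+r_n$ for $n<N$ and $\phi_n:=S_nh$ for $n\geq N$.
\end{enumerate}

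Then $\Phi$ differs from the additive sequence $S_\bullet h$ in finitely many bounded terms, so $\Phi\in\mathcal A(X,T)$. Its almost additivity bound satisfies $D(\Phi)\leq 3\max_{n<N}\|r_n\|_\infty<\infty$. For the orbit Lipschitz seminorm, each term of $S_nh$ contributes at most $|h|_{\Lip}\,d(T^jx,T^jy)$, so $|S_nh(x)-S_nh(y)|\leq|h|_{\Lip}\,d_n^{(1)}(x,y)$. Moreover $|r_n(x)-r_n(y)|\leq|r_n|_{\Lip}\,d(x,y)\leq|r_n|_{\Lip}\,d_n^{(1)}(x,y)$. Therefore
$$L_{\rm orb}(\Phi)\leq|h|_{\Lip}+\max_{n<N}|r_n|_{\Lip}<\infty .$$
By construction $\|\Psi-\Phi\|_{\mathcal A}\leq\varepsilon$, which proves density.

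The main obstacle is that Cuneo's approximation only controls $\limsup_n\frac1n\|\psi_n-S_ng\|_\infty$, while the $\mathcal A$-norm is a supremum over all $n$. The finite-head correction by the Lipschitz functions $r_n$ resolves this without destroying finiteness of $D$ or $L_{\rm orb}$. This works precisely because only finitely many terms are modified and $d(x,y)\leq d_n^{(1)}(x,y)$.
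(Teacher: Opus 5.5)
Your proposal is correct and follows essentially the same route as the paper: completeness by passing to the limit in $D$ and $L_{\rm orb}$ of the Cauchy differences, and density via Cuneo's theorem, a uniform Lipschitz approximation $h$ of $g$, and a finite-head correction by Lipschitz terms for $n<N$. Your $S_nh+r_n$ plays the role of the paper's $h_n$. The bounds $D\le 3\max_{n<N}\|r_n\|_\infty$ and the orbit-Lipschitz estimate via $d(x,y)\le d_n^{(1)}(x,y)$ are exactly those used in the paper.
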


\begin{proof}
	We first prove completeness.  Let \(\{\Psi^k\}_k\) be a Cauchy sequence in
	\(\|\cdot\|_{\mathcal{AA}_{\rm orb}}\).  Since \(\mathcal A(X,T)\) is Banach,
	there exists \(\Psi\in\mathcal A(X,T)\) such that
	$ \|\Psi^k-\Psi\|_{\mathcal A}\to0.$ 
	For each  $n$, 
	$$
	\frac{1}{n}\|\psi^k_n-\psi_n\|_{\infty}\leq \|\Psi^k-\Psi\|_{\mathcal A}\to0.
	$$
	Thus, \(\psi_n^k\) converges uniformly to \(\psi_n\) in \(C(X,\mathbb{R})\).  
	Since $\{\Psi^k\}_k$ is Cauchy in
	$\mathcal{AA}_{\rm orb}(X,T)$, we have $$D(\Psi^p-\Psi^q)\to0,\quad(p,q\to\infty).$$ For any $\varepsilon>0$, there exists $N>0$ such that for $p,q\geq N$, $$\|(\psi^p_{n+m}-\psi^p_n-\psi^p_m\circ T^n)-(\psi^q_{n+m}-\psi^q_n-\psi^q_m\circ T^n)\|_{\infty}\leq\varepsilon.$$ Letting $p\to\infty$ and then taking the supremum over $m,n$ to get $ D(\Psi^q-\Psi)\leq\varepsilon$. Since this holds for every $q\geq N$, we have, 
	$$
	D(\Psi^q-\Psi)\to0
	\qquad\text{as }q\to\infty.
	$$
	The same argument applied to the orbit Lipschitz seminorm gives
	$
	L_{\rm orb}(\Psi^k-\Psi)\to0.
	$
	Since $\Psi^q\in\mathcal{AA}_{\rm orb}(X,T)$ for every $q$, the
	preceding estimates imply that $D(\Psi)$ and
	$L_{\rm orb}(\Psi)$ are finite. Moreover,
	$
	\|\Psi^q-\Psi\|_{\mathcal{AA}_{\rm orb}}
	\to0.
	$

	We next prove the density.  Fix \(\Phi=(\phi_n)\in\mathcal A(X,T)\) and
	\(\varepsilon>0\).  By Cuneo's theorem, there exists \(g\in C(X,\mathbb{R})\) such that
	\begin{equation*}\label{eq:density-cuneo}
		\lim_{n\to\infty}\frac1n\|\phi_n-S_ng\|_\infty=0.
	\end{equation*}
	Choose $N$ large enough such that, for every $n\geq N$,
	$
	\|\phi_n-S_ng\|_\infty/n<\varepsilon/3$.  
	Since $\Lip(X)$ is dense in $C(X,\mathbb{R})$, choose $f\in\Lip(X)$ such that 
	\(\|f-g\|_\infty<\varepsilon/3\). Again using the density of $\Lip(X)$ in $C(X,\mathbb{R})$, for each
	$1\leq n<N$, choose $h_n\in\Lip(X)$ such that
	$
	\|h_n-\phi_n\|_\infty<n\varepsilon.
	$ 
	Define \(\Gamma=(\gamma_n)\) by
	\begin{equation*}\label{eq:repaired-sequence}
		\gamma_n:=
		\begin{cases}
			h_n,&1\leq n<N,\\
			S_nf,&n\geq N.
		\end{cases}
	\end{equation*}
	For \(n\geq N\),
	$$
	\frac1n\|\phi_n-\gamma_n\|_\infty
	\leq \frac1n\|\phi_n-S_ng\|_\infty+\|g-f\|_\infty
	<\frac{2\varepsilon}{3},
	$$
	while, for $1\leq n<N$,
	$$
	\frac1n\|\phi_n-\gamma_n\|_\infty
	=
	\frac1n\|\phi_n-h_n\|_\infty
	<\varepsilon.
	$$
	 Hence $\|\Phi-\Gamma\|_{\mathcal A}<\varepsilon.$
	
	To check almost additivity, write \(r_n:=\gamma_n-S_nf\).  Then \(r_n=0\) for
	\(n\geq N\), and
	$
	M:=\max_{1\leq n<N}\|r_n\|_\infty<\infty.
	$ 
	The additive parts cancel, giving
	$
	\gamma_{n+m}-\gamma_n-\gamma_m\circ T^n
	=r_{n+m}-r_n-r_m\circ T^n.
	$
	Thus \(D(\Gamma)\leq3M\).  Finally, for \(n<N\),
	$$
	|h_n(x)-h_n(y)|
	\leq |h_n|_{\Lip}d(x,y)
	\leq |h_n|_{\Lip}d_n^{(1)}(x,y),
	$$
	whereas for \(n\geq N\),
	$
	|S_nf(x)-S_nf(y)|
	\leq |f|_{\Lip}d_n^{(1)}(x,y).
	$
	Therefore
	$$
	L_{\rm orb}(\Gamma)
	\leq\max\{|f|_{\Lip},|h_1|_{\Lip},\ldots,|h_{N-1}|_{\Lip}\}<\infty.
	$$
	Hence $\Gamma\in\mathcal{AA}_{\rm orb}(X,T)$, and the preceding
	estimate proves density. Finally, the inclusion
	$
	\mathcal{AA}_{\rm orb}(X,T)\hookrightarrow\mathcal A(X,T)
	$
	is continuous because
	$
	\|\Psi\|_{\mathcal A}
	\leq
	\|\Psi\|_{\mathcal{AA}_{\rm orb}}
	$ 
	for every $\Psi\in\mathcal{AA}_{\rm orb}(X,T)$.
\end{proof}
We now take the quotient by uniformly sublinear sequences, thereby obtaining the Banach space employed in the global TPO theory.
\begin{corollary}
	\label{cor:orb-quotient}
	Let 
	$
		\mathcal E_{\rm orb}
		:=\mathcal{AA}_{\rm orb}(X,T)
		\big/
		\bigl(\mathcal{AA}_{\rm orb}(X,T)\cap\mathcal N\bigr), 
	$
	with its quotient norm.  Then $\mathcal E_{\rm orb}$ is Banach and the
	natural map
	$\mathcal E_{\rm orb}\to\mathcal S $ 
	is continuous, injective and has dense image.  If
	\(\eta=[\Phi]\in\mathcal E_{\rm orb}\), then
	$$
	\eta_*(\mu):=\Phi_*(\mu),\qquad \mu\in\M(X,T),
	$$
	is well defined and
	\begin{equation*}
		\sup_{\mu\in\M(X,T)}|\eta_*(\mu)|
		\leq\|\eta\|_{\mathcal S}
		\leq\|\eta\|_{\mathcal E_{\rm orb}}.
	\end{equation*}
\end{corollary}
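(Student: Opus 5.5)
Write $\mathcal K:=\mathcal{AA}_{\rm orb}(X,T)\cap\mathcal N$; the plan is to check each claim of the corollary in turn, using Theorem~\ref{thm:orb-banach-dense} and the structure of $\mathcal S$.

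\emph{Completeness.} First I show that $\mathcal K$ is closed in $\mathcal{AA}_{\rm orb}(X,T)$. The quotient map $\mathcal A(X,T)\to\mathcal S$ has kernel $\mathcal N$, and $\mathcal N$ is closed in $\mathcal A(X,T)$. The embedding $\iota:\mathcal{AA}_{\rm orb}(X,T)\hookrightarrow\mathcal A(X,T)$ is continuous, so $\mathcal K=\iota^{-1}(\mathcal N)$ is closed. If one wants this directly: $\limsup_n\frac1n\|\psi_n\|_\infty\le\|\Psi\|_{\mathcal A}$, because the $\mathcal A$-norm dominates $\sup_n\frac1n\|\psi_n\|_\infty$, as used in the proof of Theorem~\ref{thm:orb-banach-dense}. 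Hence $\Psi\mapsto\limsup_n\frac1n\|\psi_n\|_\infty$ is a $1$-Lipschitz seminorm, and its zero set is closed. A quotient of a Banach space by a closed subspace is Banach, so $\mathcal E_{\rm orb}$ is Banach.

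\emph{The natural map.} Define $J:\mathcal E_{\rm orb}\to\mathcal S$ by $J[\Psi]_{\mathcal E}:=[\iota\Psi]_{\mathcal S}$.
\begin{itemize}
\item Well-definedness: $\mathcal K\subset\mathcal N$.
\item Injectivity: if $\iota\Psi\in\mathcal N$, then $\Psi\in\mathcal K$ by definition.
\item Continuity: for every $K\in\mathcal K$,
$$\|J[\Psi]\|_{\mathcal S}=\limsup_n\tfrac1n\|\psi_n+k_n\|_\infty\le\|\Psi+K\|_{\mathcal A}\le\|\Psi+K\|_{\mathcal{AA}_{\rm orb}}.$$
Here the equality holds because $K$ is sublinear, so adding it does not change the $\limsup$. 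Taking the infimum over $K$ gives $\|J\eta\|_{\mathcal S}\le\|\eta\|_{\mathcal E_{\rm orb}}$. This is also the second inequality in the corollary.
\item Dense image: the quotient $\pi:\mathcal A\to\mathcal S$ is continuous and surjective, and $\iota(\mathcal{AA}_{\rm orb})$ is dense in $\mathcal A$ by Theorem~\ref{thm:orb-banach-dense}. Therefore $\pi\iota(\mathcal{AA}_{\rm orb})=J(\mathcal E_{\rm orb})$ is dense in $\mathcal S$.
\end{itemize}

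\emph{The functional $\eta_*$.} If $\Phi-\Phi'\in\mathcal N$, then
$$\Bigl|\tfrac1n\int\phi_n\,d\mu-\tfrac1n\int\phi'_n\,d\mu\Bigr|\le\tfrac1n\|\phi_n-\phi'_n\|_\infty\to0.$$
Both limits exist by Lemma~\ref{lem:functional}, since both sequences are almost additive, so $\Phi_*(\mu)=\Phi'_*(\mu)$ and $\eta_*$ is well defined. For the first inequality, $\bigl|\tfrac1n\int\phi_n\,d\mu\bigr|\le\tfrac1n\|\phi_n\|_\infty$, and passing to the limit gives $|\eta_*(\mu)|\le\limsup_n\tfrac1n\|\phi_n\|_\infty$. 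The right-hand side depends only on the class, and it is the $\mathcal S$-norm of $J\eta$.

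\emph{Main point to check.} The only delicate step is the normalization. I need $\limsup_n\frac1n\|\psi_n\|_\infty\le\|\Psi\|_{\mathcal A}$ and that this $\limsup$ is exactly the quotient norm of $\mathcal S$. Both are taken as given in the paper's preliminaries: the displayed $\mathcal S$-norm formula, and the estimate $\frac1n\|\psi_n^k-\psi_n\|_\infty\le\|\Psi^k-\Psi\|_{\mathcal A}$ in the proof of Theorem~\ref{thm:orb-banach-dense}. All other steps are routine.
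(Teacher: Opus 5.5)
Your proposal is correct and follows essentially the same route as the paper. You prove closedness of $\mathcal{AA}_{\rm orb}(X,T)\cap\mathcal N$ via the continuous inclusion into $\mathcal A(X,T)$, injectivity from the definition of the kernel, continuity (and $\|\eta\|_{\mathcal S}\le\|\eta\|_{\mathcal E_{\rm orb}}$) by taking the infimum of $\|\Psi+K\|_{\mathcal{AA}_{\rm orb}}$ over sublinear $K$, dense image from Theorem~\ref{thm:orb-banach-dense} composed with the quotient map, and the bound on $\eta_*$ from $\bigl|\frac1n\int\phi_n\,d\mu\bigr|\le\frac1n\|\phi_n\|_\infty$; your explicit check that $\eta_*$ does not depend on the representative, which the paper only verifies later in the proof of Theorem~\ref{thm:intro-global-locking}, is a useful addition.
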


\begin{proof}
	The intersection \(\mathcal{AA}_{\rm orb}(X,T)\cap\mathcal N\) is closed in
	\(\mathcal{AA}_{\rm orb}(X,T)\), since the inclusion into $\mathcal A(X,T)$ is continuous and
	$\mathcal N$ is closed in $\mathcal A(X,T)$.   Hence, as the quotient of a Banach space by a closed subspace,
	$\mathcal E_{\rm orb}$ is a Banach space.  	Let
	$
	\iota:\mathcal E_{\rm orb}\to\mathcal S
	$
	be the natural map.  Indeed, if $\iota([\Psi])=0$, then $\Psi\in\mathcal N$. Since
	$\Psi\in\mathcal{AA}_{\rm orb}(X,T)$, it follows that
	$\Psi\in\mathcal{AA}_{\rm orb}(X,T)\cap\mathcal N$, and hence
	$[\Psi]=0$ in $\mathcal E_{\rm orb}$. 
	
	For every $\eta=[\Psi]\in\mathcal E_{\rm orb}$ and 
	$\Phi\in\mathcal{AA}_{\rm orb}(X,T)\cap\mathcal N$, we have 
	$
	\|\iota(\eta)\|_{\mathcal S}
	\leq
	\|\Psi+\Phi\|_{\mathcal A}
	\leq
	\|\Psi+\Phi\|_{\mathcal{AA}_{\rm orb}}.
	$ 
	Taking the infimum over all such $\Phi$ gives
	$
	\|\iota(\eta)\|_{\mathcal S}
	\leq
	\|\eta\|_{\mathcal E_{\rm orb}}.
	$
	Thus $\iota$ is continuous. Moreover, Theorem~\ref{thm:orb-banach-dense}
	implies that
	$\mathcal{AA}_{\rm orb}(X,T)$ is dense in $\mathcal A(X,T)$.
    Finally,
	for \(\eta=[\Psi]\),
	$
	\sup_{\mu\in\M(X,T)}|\eta_*(\mu)|
	\leq
	\limsup_{n\to\infty}\frac1n\|\psi_n\|_\infty
	\leq
	\|\Psi\|_{\mathcal A}.
	$
	Taking the infimum over all representatives of $\eta$ in
	$\mathcal S$ gives
	$$
	\sup_{\mu\in\M(X,T)}|\eta_*(\mu)|
	\leq
	\|\eta\|_{\mathcal S}.
	$$
	The inequality
	$
	\|\eta\|_{\mathcal S}
	\leq
	\|\eta\|_{\mathcal E_{\rm orb}}
	$
	follows from the definitions of the quotient norms.
\end{proof}

We now fix an almost additive sequence $\Phi$ and introduce the
Lipschitz perturbation space $\cL_\Phi(X)$ on which the relative TPO theory will be formulated. 
\begin{definition} Let $(X,T)$ be a TDS and let
	$\Phi=\{\phi_n\}_{n\geq1}\subset C(X,\mathbb R)$
	be an almost additive sequence 
	and  \(f\in\Lip(X)\), define
	$$
	\Phi^f:=\Phi+S_\bullet f,
	\qquad \phi_n^f:=\phi_n+S_nf.
	$$
	The \emph{Lipschitz leaf over \(\Phi\)} is
	$$
	\cL_\Phi(X):=\{\Phi^f:f\in\Lip(X)\},
	$$
	equipped with the metric
	$$
	d_\Phi(\Phi^f,\Phi^g):=\|f-g\|_{\Lip}.
	$$
	Thus \(\cL_\Phi(X)\) is an affine Banach space canonically isometric to
	\(\Lip(X)\). We call \(f\) the perturbation function.
	The optimization  on the leaf is
	\begin{equation*}
		(\Phi^f)_*(\mu)=\Phi_*(\mu)+\int f\,d\mu.
	\end{equation*}
\end{definition}

$\mathcal L_\Phi(X)$ provides the natural
Lipschitz topology for perturbations of a fixed almost additive sequence. It should,
however, be distinguished from both the full Banach space of
asymptotically additive sequences and its asymptotic quotient. We now
introduce these two  objects and compare them with
$\mathcal L_\Phi(X)$.

Let \(q:\mathcal A(X,T)\to\mathcal S\) be the quotient map.  Cuneo's theorem
\cite{Cuneo2020} says that the bounded linear map
\begin{equation*}
	\pi:C(X,\mathbb{R})\longrightarrow\mathcal S,
	\qquad \pi(g):=[S_\bullet g],
\end{equation*}
is surjective.  It is therefore open by 
\cite[Lemma 2.4]{BomfimHuoVarandasZhao2023}.

\begin{theorem}
	\label{thm:leaf-quotient-dense}  Let $(X,T)$ be a TDS and let
	$\Phi=\{\phi_n\}_{n\geq1}\subset C(X,\mathbb R)$
	be an almost additive sequence. Then
	\begin{equation*}
		\overline{q(\cL_\Phi(X))}^{\,\|\cdot\|_{\mathcal S}}
		=\mathcal S.
	\end{equation*}
	In contrast, \(\cL_\Phi(X)\) is not dense in  
	\((\mathcal A(X,T),\|\cdot\|_{\mathcal A})\).
\end{theorem}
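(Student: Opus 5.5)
To prove the density statement, I will first put $q(\cL_\Phi(X))$ in a simpler form. Since $\Phi$ is almost additive, it is asymptotically additive, so $\Phi\in\mathcal A(X,T)$. By Remark~\ref{rem:cuneo} there is $g_\Phi\in C(X,\mathbb R)$ with $\frac1n\|\phi_n-S_ng_\Phi\|_\infty\to0$, so $q(\Phi)=\pi(g_\Phi)$. By linearity, $q(\Phi^f)=\pi(g_\Phi+f)$ for every $f\in\Lip(X)$. Hence
$$
q(\cL_\Phi(X))=\pi(g_\Phi+\Lip(X)).
$$

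Now fix $\zeta\in\mathcal S$ and $\varepsilon>0$. Since $\pi$ is surjective by Cuneo's theorem, choose $h\in C(X,\mathbb R)$ with $\pi(h)=\zeta$. By uniform density of $\Lip(X)$ in $C(X,\mathbb R)$, choose $f\in\Lip(X)$ with $\|f-(h-g_\Phi)\|_\infty<\varepsilon$. The estimate $\|\pi(u)\|_{\mathcal S}=\limsup\frac1n\|S_nu\|_\infty\le\|u\|_\infty$ then gives
$$
\|q(\Phi^f)-\zeta\|_{\mathcal S}=\|\pi(g_\Phi+f-h)\|_{\mathcal S}<\varepsilon .
$$
This proves density. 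Openness of $\pi$ is not actually needed here; only surjectivity and the bound $\|\pi(u)\|_{\mathcal S}\le\|u\|_\infty$ are used.

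For the non-density in $\mathcal A(X,T)$, I will use the first coordinate. I take $\|\Psi\|_{\mathcal A}=\sup_n\frac1n\|\psi_n\|_\infty$; this is the norm implied in the proof of Theorem~\ref{thm:orb-banach-dense}, where $\frac1n\|\psi_n\|_\infty\le\|\Psi\|_{\mathcal A}$ for each $n$. The main obstacle is to find an invariant that is continuous in $\|\cdot\|_{\mathcal A}$ but constrained on the leaf. Every element of the leaf has first term $\phi_1+f$ with $f$ Lipschitz, so its first coordinate lies in the affine set $\phi_1+\Lip(X)$. That set is dense in $C(X,\mathbb R)$, so the first coordinate alone does not separate, and I need a combination of coordinates. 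The natural choice is the additivity defect at $(n,m)=(1,1)$. For $\Psi=\Phi^f$ we have
$$
\psi_2-\psi_1-\psi_1\circ T=\phi_2-\phi_1-\phi_1\circ T,
$$
because the $f$-terms cancel. The map
$$
\Psi\mapsto \psi_2-\psi_1-\psi_1\circ T
$$
is continuous from $(\mathcal A,\|\cdot\|_{\mathcal A})$ to $(C(X,\mathbb R),\|\cdot\|_\infty)$, with norm at most $4$ since $\|\psi_2\|_\infty\le2\|\Psi\|_{\mathcal A}$ and $\|\psi_1\|_\infty\le\|\Psi\|_{\mathcal A}$. It is constant on $\cL_\Phi(X)$, equal to $c:=\phi_2-\phi_1-\phi_1\circ T$.

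So it suffices to exhibit $\Psi\in\mathcal A(X,T)$ whose defect $\psi_2-\psi_1-\psi_1\circ T$ differs from $c$ by at least $1$ in sup norm. Such a $\Psi$ lies at $\|\cdot\|_{\mathcal A}$-distance at least $1/4$ from the leaf. I will take $\psi_n=\phi_n$ for $n\ne2$ and $\psi_2=\phi_2+1$. This changes a single term, so $\Psi$ is still asymptotically additive (even almost additive), and its defect is $c+1$. This works on any TDS with $X$ nonempty.
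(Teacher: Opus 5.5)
Your proposal is correct and follows essentially the same route as the paper. For density, you use surjectivity of $\pi$ and uniform approximation by Lipschitz functions, exactly as the paper does. For non-density, your $\Psi$ (adding $1$ at $n=2$) is exactly the paper's $\Phi+B$, and you get the same bound $1/4$ from the $n=1,2$ coordinates; phrasing it through the continuous defect map $\Psi\mapsto\psi_2-\psi_1-\psi_1\circ T$, which is constant on the leaf, is just a cleaner packaging of the paper's computation.
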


\begin{proof}
	Let \([\Psi]\in\mathcal S\).  By the surjectivity of \(\pi\), there exists
	\(g\in C(X,\mathbb{R})\) such that
	$$
	\pi(g)=[\Psi]-[\Phi].
	$$
	Choose \(f_k\in\Lip(X)\) with \(\|f_k-g\|_\infty\to0\).  Since
	$$
	\|\pi(f_k-g)\|_{\mathcal S}
	\leq \|f_k-g\|_\infty,
	$$
	we have
	$
	[\Phi+S_\bullet f_k]\longrightarrow[\Psi]
	\text{ in }\mathcal S.
	$
	This proves the density of $q(\cL_\Phi(X))$ in $\mathcal S$.
	
	To show that $\cL_\Phi(X)$ is not dense before passing to the quotient,
	consider the sequence $B=\{b_n\}_{n\geq1}$ of constant functions defined by
	$b_1\equiv0$, $b_2\equiv1$, $b_n\equiv0$ for every  $n\geq3.	$ 
	Since each $b_n$ takes values in $\{0,1\}$, we have
	$$
	\|b_{n+m}-b_n-b_m\circ T^n\|_\infty\leq2
	$$
	for all $n,m\geq1$. Thus $B$ is almost additive with
	$D(B)\leq2$. In particular, $\Phi+B\in\mathcal A(X,T)$.
	Fix $f\in\Lip(X)$ and set
	$
	\delta_f:=\|B-S_\bullet f\|_{\mathcal A}.
	$
	The terms corresponding to $n=1$ and $n=2$ give
	$
	\|f\|_\infty
	=
	\|b_1-S_1f\|_\infty
	\leq\delta_f
	$
	and
	$
	\|1-f-f\circ T\|_\infty
	=
	\|b_2-S_2f\|_\infty
	\leq2\delta_f.
	$
	Therefore,
	\begin{align*}
		1
		\leq
		\|1-f-f\circ T\|_\infty
		+\|f+f\circ T\|_\infty
		\leq
		2\delta_f+2\|f\|_\infty
		\leq4\delta_f.
	\end{align*}
	Hence
	$
	\|B-S_\bullet f\|_{\mathcal A}
	=\delta_f
	\geq\frac14
	$
	for every $f\in\Lip(X)$. Consequently,
	$$
	\operatorname{dist}_{\mathcal A}
	\bigl(\Phi+B,\cL_\Phi(X)\bigr)
	=
	\inf_{f\in\Lip(X)}
	\|B-S_\bullet f\|_{\mathcal A}
	\geq\frac14.
	$$
	Thus $\cL_\Phi(X)$ is not dense in
	$(\mathcal A(X,T),\|\cdot\|_{\mathcal A})$.
\end{proof}

\begin{remark}
	The above density  
	results naturally lead to the   question: Since $q(\mathcal L_\Phi(X))$ is dense in $\mathcal S$ for every
	almost additive sequence $\Phi$, does relative TPO on a fixed leaf 
	imply TPO in $\mathcal S$?
	In general, the answer is negative. Density  
	does not   transfer openness or Baire genericity between different
	topologies. Even the continuous open surjection
	$\pi:C(X,\mathbb{R})\longrightarrow\mathcal S$ does not, in general, guarantee that the image of an arbitrary residual set
	is residual. 
	
	For a fixed almost additive sequence $\Phi$, the leaf
	$\mathcal L_\Phi(X)$ is identified with $\Lip(X)$, and relative TPO is
	studied in the Banach topology of $\Lip(X)$, as in the classical setting.
	The openness, density and
	genericity in relative TPO are all understood in this topology. Since all
	perturbations $f$ remain in the same leaf, relative TPO does not depend on whether
	the leaf is dense in a larger sequence space.
	This also explains why the global Banach space $\mathcal E_{\rm orb}$ is
	needed. 
	
	In Theorem \ref{thm:leaf-quotient-dense}, we proved that the image of the leaf \(\cL_\Phi(X)\) is dense in 
	\(\mathcal S\). However, we cannot say that the  leaf is densely embedded into  \(\mathcal S\), because embedding typically requires the map to be injective, while  \(q|_{\cL_\Phi}\) need not be injective. For any $f,g \in Lip(X)$, 
	$$
	q(\Phi^{f})=q(\Phi^{g}) \iff S_\bullet(f-g)=\Phi^f-\Phi^g\in\mathcal N.
	$$
	For example, when
	\(h=u-u\circ T\in\Lip(X)\), \(u\in C(X,\mathbb{R})\),
	$
	\|S_nh\|_{\infty}/n=\|u-u\circ T^n\|_{\infty}/n\leq 2\|u\|_{\infty}/n.
	$
	Thus, $S_nh$ is sublinear in the norm $\|\cdot\|_{\mathcal{A}}$, so $S_\bullet h\in \mathcal{N}$. 
\end{remark}
Although $\cL_\Phi(X)$ is not dense in $\mathcal A(X,T)$, its closure
can be described explicitly.
\begin{proposition}
	\label{prop:raw-leaf-closure}
	Let $(X,T)$ be a TDS and let
	$\Phi=\{\phi_n\}_{n\geq1}\subset C(X,\mathbb{R})$ be an almost additive sequence.
	Then
	\begin{equation*}
		\overline{\cL_\Phi(X)}^{\,\|\cdot\|_{\mathcal A}}
		=\{\Phi+S_\bullet g:g\in C(X,\mathbb{R})\}.
	\end{equation*}
\end{proposition}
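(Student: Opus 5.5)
The plan is to prove the two inclusions separately, working with the strong sequence norm in the form $\|\Psi\|_{\mathcal A}=\sup_{n\geq1}\frac1n\|\psi_n\|_\infty$. This is the form implicitly used in the proof of Theorem~\ref{thm:orb-banach-dense}, where $\frac1n\|\psi_n\|_\infty\leq\|\Psi\|_{\mathcal A}$ for every $n$. The argument needs two facts about this norm.
\begin{enumerate}[label=\textup{(\alph*)}]
\item The Birkhoff sums satisfy $\|S_\bullet h\|_{\mathcal A}\leq\|h\|_\infty$ for every $h\in C(X,\mathbb R)$, because $\|S_nh\|_\infty\leq n\|h\|_\infty$.
\item The $n=1$ coordinate is controlled by the norm: $\|\psi_1\|_\infty\leq\|\Psi\|_{\mathcal A}$.
\end{enumerate}
I will also note at the outset that every $\Phi+S_\bullet g$ with $g\in C(X,\mathbb R)$ lies in $\mathcal A(X,T)$. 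Indeed, $\Phi$ is almost additive, hence asymptotically additive, and $S_\bullet g$ is additive, and $\mathcal A(X,T)$ is a vector space. So the right-hand side is a subset of $\mathcal A(X,T)$.

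\emph{Inclusion $\supseteq$.} Fix $g\in C(X,\mathbb R)$. By density of $\Lip(X)$ in $C(X,\mathbb R)$, I choose $f_k\in\Lip(X)$ with $\|f_k-g\|_\infty\to0$. Then fact (a) gives
\[
\|(\Phi+S_\bullet f_k)-(\Phi+S_\bullet g)\|_{\mathcal A}=\|S_\bullet(f_k-g)\|_{\mathcal A}\leq\|f_k-g\|_\infty\to0 .
\]
Hence $\Phi+S_\bullet g$ belongs to the $\|\cdot\|_{\mathcal A}$-closure of $\cL_\Phi(X)$.

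\emph{Inclusion $\subseteq$.} Since $\cL_\Phi(X)$ is contained in the right-hand side, it suffices to show that the set $\{\Phi+S_\bullet g:g\in C(X,\mathbb R)\}$ is closed in $\mathcal A(X,T)$. Let $\Phi+S_\bullet g_k\to\Psi$ in $\|\cdot\|_{\mathcal A}$.
\begin{enumerate}[label=\textup{(\roman*)}]
\item The sequence $S_\bullet g_k=(\Phi+S_\bullet g_k)-\Phi$ is Cauchy in $\|\cdot\|_{\mathcal A}$. Looking only at the $n=1$ term, fact (b) gives $\|g_k-g_l\|_\infty\leq\|S_\bullet g_k-S_\bullet g_l\|_{\mathcal A}$.
\item So $(g_k)$ is Cauchy in $C(X,\mathbb R)$ and converges uniformly to some $g$.
\item Fact (a) then gives $\|S_\bullet g_k-S_\bullet g\|_{\mathcal A}\leq\|g_k-g\|_\infty\to0$, so $\Phi+S_\bullet g_k\to\Phi+S_\bullet g$.
\item By uniqueness of limits, $\Psi=\Phi+S_\bullet g$.
\end{enumerate}
This is the mechanism that also underlies the non-density example in Theorem~\ref{thm:leaf-quotient-dense}: the first coordinate pins down the perturbation function.

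The only step needing care is the closedness argument, specifically the observation that the $n=1$ coordinate controls the sup norm of $g$. This depends on $\|\cdot\|_{\mathcal A}$ dominating $\|\psi_1\|_\infty$. If the norm on $\mathcal A(X,T)$ were instead defined only through asymptotic quantities, this step would fail and the closure would have to be computed differently. Given the estimate $\frac1n\|\psi_n\|_\infty\leq\|\Psi\|_{\mathcal A}$ used earlier in the paper, however, the argument goes through directly.
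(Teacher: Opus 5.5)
Your proposal is correct and follows essentially the same route as the paper: uniform approximation by Lipschitz functions together with $\|S_\bullet h\|_{\mathcal A}\leq\|h\|_\infty$ gives one inclusion, and control of the $n=1$ coordinate pins down the perturbation function for the other. The only cosmetic difference is that you prove the whole set $\{\Phi+S_\bullet g:g\in C(X,\mathbb R)\}$ is closed, using a Cauchy argument and uniqueness of limits. The paper instead sets $g:=\psi_1-\phi_1$ directly and checks $\psi_n=\phi_n+S_ng$ coordinate by coordinate.
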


\begin{proof}
Fix $g\in C(X,\mathbb{R})$. By the uniform density of $\Lip(X)$ in $C(X,\mathbb{R})$,
there exists a sequence $\{f_k\}_{k\geq1}\subset\Lip(X)$ such that
$f_k\to g$ uniformly on $X$. Moreover,
$
\|(\Phi+S_\bullet f_k)-(\Phi+S_\bullet g)\|_{\mathcal A}
=
\|S_\bullet(f_k-g)\|_{\mathcal A}
\leq
\|f_k-g\|_\infty.
$
Therefore,
$
\{\Phi+S_\bullet g:g\in C(X,\mathbb{R})\}
\subset
\overline{\cL_\Phi(X)}^{\,\|\cdot\|_{\mathcal A}}.
$

Conversely, suppose that
$\Phi+S_\bullet f_k\to\Psi=\{\psi_n\}_{n\geq1}$
in $\|\cdot\|_{\mathcal A}$, where $f_k\in\Lip(X)$. The convergence
of the first coordinates gives
$
\|\phi_1+f_k-\psi_1\|_\infty\to0.
$
Set
$
g:=\psi_1-\phi_1.
$
Then $g\in C(X,\mathbb{R})$ and $f_k\to g$ uniformly on $X$.

For every fixed $n\geq1$, 
$
\|\phi_n+S_nf_k-\psi_n\|_\infty
\leq
n\|\Phi+S_\bullet f_k-\Psi\|_{\mathcal A}
\to0.
$
On the other hand,
$
\|S_nf_k-S_ng\|_\infty
\leq
n\|f_k-g\|_\infty
\to0.
$
It follows that $ psi_n=\phi_n+S_ng$
for every $n\geq1$. Hence $\Psi=\Phi+S_\bullet g$, which proves the reverse inclusion.
\end{proof}

\section{Maximizing sets for almost additive potentials}
This section studies maximizing sets for almost additive sequences. We first introduce maximizable and minimax sets. Next, we study completely maximizing sets and the subordination property, and give a boundedness condition under which a minimax set is completely maximizing and dynamically minimal. Finally, for orbit Lipschitz sequence, we use the subset closing property to verify this boundedness condition.

\begin{definition}
Let \(\Phi=\{\phi_n\}\subset C(X,\mathbb{R})\) be an almost additive.  A closed invariant set \(Y\subset X\) is
\emph{\(\Phi\)-maximizable} if
$$
 \M(Y,T)\cap\Max(X,T,\Phi)\neq\varnothing.
$$
A \(\Phi\)-maximizable set \(Y\) is \emph{minimax} if every
\(\mu\in\M(Y,T)\cap\Max(X,T,\Phi)\) satisfies \(\supp\mu=Y\).
\end{definition}

\begin{lemma}\label{lem:minimax-exists}
Let $(X,T)$ be a TDS and let
$\Phi=\{\phi_n\}_{n\geq1}\subset C(X,\mathbb{R})$ be an almost additive sequence.
Then there exists a minimax set for $\Phi$.
\end{lemma}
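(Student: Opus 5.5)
Pick an ergodic maximizing measure and take its support as the minimax set. The key fact to establish first is that $\Max(X,T,\Phi)$ has an ergodic element. By Lemma~\ref{lem:functional}, $\Phi_*$ is continuous and affine on $\M(X,T)$, and $\Max(\Phi)$ is a nonempty compact convex set. The plan is to show it is a face of $\M(X,T)$. Suppose $\mu=t\mu_1+(1-t)\mu_2\in\Max(\Phi)$ with $t\in(0,1)$. Affinity gives $\beta(\Phi)=t\Phi_*(\mu_1)+(1-t)\Phi_*(\mu_2)$, and both terms are at most $\beta(\Phi)$, so both $\mu_i$ lie in $\Max(\Phi)$. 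By Krein--Milman, $\Max(\Phi)$ has an extreme point. An extreme point of a face is an extreme point of $\M(X,T)$, hence ergodic. Call this measure $\nu$.

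Put $Y:=\supp\nu$. It is closed, and it is invariant because $\nu$ is $T$-invariant: $\nu(T^{-1}(X\setminus Y))=\nu(X\setminus Y)=0$, so $T(Y)\subset Y$ up to closure, and $T(\supp\nu)=\supp\nu$ holds for invariant measures on compact spaces. Since $\nu\in\M(Y,T)\cap\Max(\Phi)$, the set $Y$ is $\Phi$-maximizable. Ergodicity is not actually needed beyond existence; any maximizing measure would do for the next step.

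Minimality argument. A single support need not be minimax, so I instead choose a maximizable set that is minimal under inclusion. Let $\mathfrak F$ be the family of closed invariant $\Phi$-maximizable sets, ordered by inclusion. It is nonempty since $X\in\mathfrak F$. To apply Zorn's lemma, take a decreasing chain $\{Y_\alpha\}$ and set $Y_\infty=\bigcap_\alpha Y_\alpha$, which is closed, invariant and nonempty by compactness. Pick $\mu_\alpha\in\M(Y_\alpha,T)\cap\Max(\Phi)$. Then $\{\M(Y_\alpha,T)\cap\Max(\Phi)\}$ is a chain of nonempty compact subsets of the weak$^*$-compact set $\M(X,T)$, so its intersection is nonempty. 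A measure in that intersection is supported in every $Y_\alpha$, hence in $Y_\infty$, and it is maximizing. So $Y_\infty\in\mathfrak F$, and Zorn gives a minimal element $Y$.

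Finally I show $Y$ is minimax. Let $\mu\in\M(Y,T)\cap\Max(\Phi)$. Then $\supp\mu\subset Y$ is closed and invariant, and $\mu$ witnesses that it is maximizable. Minimality forces $\supp\mu=Y$. The only points needing care are that $\M(Y_\alpha,T)$ is closed in the weak$^*$ topology (by the portmanteau theorem for the closed set $Y_\alpha$), which makes the chain-intersection step legitimate, and that supports of invariant measures are invariant. The chain step is the main one to check.
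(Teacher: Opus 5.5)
Your proof is correct and is the same argument as the paper's. It applies Zorn's lemma to the closed invariant $\Phi$-maximizable sets, bounds chains below using the nested nonempty compact sets $\M(Y_\alpha,T)\cap\Max(X,T,\Phi)$, and then notes that for any maximizing $\mu$ on the minimal $Y$, the set $\supp\mu$ is itself maximizable and so equals $Y$. Your opening construction of an ergodic maximizing measure (the face/Krein--Milman argument and the invariance of $\supp\nu$) is never used afterwards and can be deleted.
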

\begin{proof}
	Let $\mathscr Y_\Phi$ denote the collection of all $\Phi$-maximizable
	closed invariant subsets of $X$, ordered by set-theoretic inclusion.
	This collection is non empty because
	$
	\M(X,T)\cap\Max(X,T,\Phi)=\Max(X,T,\Phi)\neq\varnothing,
	$
	and hence $X\in\mathscr Y_\Phi$.
	
	We show that every totally ordered subfamily of $\mathscr Y_\Phi$ has a
	lower bound in $\mathscr Y_\Phi$. Let
	$\{Y_\alpha\}_{\alpha\in I}\subset\mathscr Y_\Phi$
	be totally ordered by inclusion, and set
	$
	Y_\infty:=\bigcap_{\alpha\in I}Y_\alpha.
	$
	Since every $Y_\alpha$ is closed and $T$-invariant, $Y_\infty$ is also
	closed and $T$-invariant.
	
	For each $\alpha\in I$, define
	$
	K_\alpha
	:=
	\M(Y_\alpha,T)\cap\Max(X,T,\Phi).
	$
	Because $Y_\alpha$ is $\Phi$-maximizable, $K_\alpha$ is non-empty.
	Moreover, $\M(Y_\alpha,T)$ is closed in $\M(X,T)$, while
	$\Max(X,T,\Phi)$ is compact. Hence $K_\alpha$ is compact.	
	If $Y_\alpha\subseteq Y_\beta$, then
	$
	\M(Y_\alpha,T)\subseteq\M(Y_\beta,T),
	$
	and therefore
	$
	K_\alpha\subseteq K_\beta.
	$
	Thus, the family $\{K_\alpha\}_{\alpha\in I}$ is totally ordered. In particular, it has the finite intersection property:
	Since all the sets
	$K_\alpha$ are compact subsets of the compact space
	$\Max(X,T,\Phi)$, it follows that
	$	\bigcap_{\alpha\in I}K_\alpha\neq\varnothing.	$	
	Choose
	$
	\mu\in\bigcap_{\alpha\in I}K_\alpha.
	$
	Then $\mu\in\Max(X,T,\Phi)$ and
	$
	\supp\mu\subseteq Y_\alpha$
	for every $\alpha\in I.
	$
	Consequently,
	$
	\supp\mu\subseteq\bigcap_{\alpha\in I}Y_\alpha=Y_\infty.
	$
	It follows that
	$
	\mu\in\M(Y_\infty,T)\cap\Max(X,T,\Phi),
	$
	so \(Y_\infty\) is \(\Phi\)-maximizable. Therefore,
	\(Y_\infty\in\mathscr Y_\Phi\), and it is a lower bound of 
	\(\{Y_\alpha\}_{\alpha\in I}\).
	
	By Zorn's lemma, 
	$(\mathscr Y_\Phi,\subseteq)$ has a minimal element $Y$. Thus, if
	$Y'\in\mathscr Y_\Phi$ and $Y'\subseteq Y$, then $Y'=Y$.	
	We claim that $Y$ is a minimax set for $\Phi$. Let
	$
	\mu\in\M(Y,T)\cap\Max(X,T,\Phi)
	$
	and set
	$
	Y':=\supp\mu.
	$
	The set $Y'$ is non-empty, closed and $T$-invariant. Moreover,
	$
	\mu\in\M(Y',T)\cap\Max(X,T,\Phi),
	$
	so $Y'$ is $\Phi$-maximizable, that is,
	$Y'\in\mathscr Y_\Phi$. Since $\mu$ is supported on $Y$, one has
	$Y'\subseteq Y$. The minimality of $Y$  gives
	$
	Y'=\supp\mu=Y.
	$
	 Hence $Y$ is a
	minimax set for $\Phi$.
\end{proof}

The following   estimate compares the normalized value of an almost
additive sequence along an orbit segment with the average of a fixed block
over its empirical measure.

\begin{lemma}\label{lem:blocking}
Let $\Phi=\{\phi_n\}_{n\geq1}\subset C(X,\mathbb{R})$ be an almost additive sequence. For
$x\in X$ and $n\geq1$, define
$$
 \nu_{x,n}:=\frac1n\sum_{j=0}^{n-1}\delta_{T^jx}.
$$
For any \(k\geq1\), with
\(M_k:=\max_{1\leq r\leq k}\|\phi_r\|_\infty\), one has, for \(n\geq k\),
\begin{equation}\label{eq:blocking}
 \left|
 \frac1n\phi_n(x)-\frac1k\int\phi_k\,d\nu_{x,n}
 \right|
 \leq \frac{3M_k}{n}+\frac{C_\Phi}{k}.
\end{equation}
\end{lemma}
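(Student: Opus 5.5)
The plan is the standard $k$-residue blocking argument. We decompose the orbit segment of length $n$ into blocks of length $k$ in $k$ different ways, one for each starting offset $r\in\{0,\dots,k-1\}$. Summing the resulting almost additive identities over $r$ reproduces $\sum_j\phi_k(T^jx)$ up to boundary terms and accumulated errors. The only delicate point is counting the number of concatenations exactly, so that the total error is strictly less than $nC_\Phi$ and not $(n+k)C_\Phi$; otherwise a stray $C_\Phi/n$ would appear that cannot be absorbed into $3M_k/n$.

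\textbf{Step 1 (one offset).} Fix $n\ge k$ and $r\in\{0,\dots,k-1\}$. Write $n=r+q_rk+s_r$ with $q_r\ge0$ and $0\le s_r<k$. Split $[0,n)$ into three kinds of pieces:
\begin{itemize}
\item an initial piece $[0,r)$, present only if $r>0$;
\item the $q_r$ blocks $[r+ik,\,r+(i+1)k)$ for $0\le i<q_r$;
\item a terminal piece of length $s_r$, present only if $s_r>0$.
\end{itemize}
Apply \eqref{eq:aa} once at each junction between consecutive pieces. Using $\|\phi_t\|_\infty\le M_k$ for $t<k$, this gives
$$
\Bigl|\phi_n(x)-\sum_{i=0}^{q_r-1}\phi_k(T^{r+ik}x)\Bigr|\le M_k\bigl([r>0]+[s_r>0]\bigr)+J_rC_\Phi ,
$$
where $J_r$, the number of junctions, equals the number of pieces minus one, namely $J_r=q_r+[r>0]+[s_r>0]-1$.

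\textbf{Step 2 (sum over offsets).} The block starting points $r+ik$ run, as $r$ and $i$ vary, exactly once through $\{0,1,\dots,n-k\}$. Hence
$$
\sum_r\sum_i\phi_k(T^{r+ik}x)=\sum_{j=0}^{n-k}\phi_k(T^jx),
\qquad \sum_r q_r=n-k+1 .
$$
Therefore
$$
\sum_rJ_r=(n-k+1)+(k-1)+\#\{r:s_r>0\}-k\le n-1<n .
$$
The boundary terms from Step 1 contribute at most $(k-1)M_k+kM_k$. Comparing $\sum_{j=0}^{n-k}\phi_k(T^jx)$ with $\sum_{j=0}^{n-1}\phi_k(T^jx)=n\int\phi_k\,d\nu_{x,n}$ costs at most $(k-1)M_k$ for the missing indices. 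Altogether,
$$
\Bigl|k\,\phi_n(x)-n\int\phi_k\,d\nu_{x,n}\Bigr|\le 3kM_k+nC_\Phi .
$$

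\textbf{Step 3.} Dividing by $kn$ yields \eqref{eq:blocking}.

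The main obstacle is the bookkeeping in Step 2: showing that the junction count summed over all offsets stays below $n$. The crude per-offset bound $J_r\le q_r+1$ gives only $n+1$, which is not enough. The fix is to use the exact identity $\sum_rq_r=n-k+1$, together with the observation that the offset $r=0$ has no initial piece, which saves one junction relative to the crude count.
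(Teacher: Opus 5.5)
Your proof is correct and uses the same offset-blocking argument as the paper: offsets $0,\dots,k-1$, full $k$-blocks, summation over offsets, then restoring the last $k-1$ terms at cost $(k-1)M_k$. Your exact junction count $J_r=q_r+[r>0]+[s_r>0]-1$ is in fact more careful than the paper's proof. The paper asserts $\sum_s q_s=n-k$, whereas the correct value is $n-k+1$, as you note; with its crude per-offset bound $(q_s+1)C_\Phi$, the paper would therefore only reach $(n+1)C_\Phi$. One small point: your bound $\sum_rJ_r\le n-1$ tacitly uses that exactly one offset has $s_r=0$, but the trivial bound $\sum_rJ_r\le n$ already suffices.
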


\begin{proof}
For each  \(s\in\{0,\ldots,k-1\}\), we decompose the orbit segment into an
initial part of length \(s\), complete blocks of length \(k\), and a final part
of length less than \(k\).  Repeated use of \eqref{eq:aa} yields
$$
 \left|
 \sum_{j=0}^{q_s-1}\phi_k(T^{s+jk}x)-\phi_n(x)
 \right|
 \leq 2M_k+(q_s+1)C_\Phi,
$$
where \(q_s=\lfloor(n-s)/k\rfloor\).  Summing over \(s=0,\cdots,k-1\), we obtain 
$$
\left|\sum_{s=0}^{k-1}
\sum_{j=0}^{q_s-1}\phi_k(T^{s+jk}x)-k\phi_n(x)
\right|
\leq 2kM_k+C_\Phi\sum_{s=0}^{k-1}(q_s+1).
$$
 Since \(\sum_{s=0}^{k-1}q_s=n-k\), the full block
starting positions are precisely \(0,\ldots,n-k\).  Therefore,
$$
 \left|
 \sum_{i=0}^{n-k}\phi_k(T^ix)-k\phi_n(x)
 \right|
 \leq 2kM_k+nC_\Phi.
$$
Adding the last at most \(k-1\) terms,
$$
\left|
\sum_{i=0}^{n-1}\phi_k(T^ix)-k\phi_n(x)
\right|
\leq \left|
\sum_{i=0}^{n-k}\phi_k(T^ix)-k\phi_n(x)
\right|+(k-1)C_\Phi\leq 3kM_k+nC_{\Phi}.
$$
Dividing by \(kn\) gives
\eqref{eq:blocking}.
\end{proof}
The  above estimate allows us to relate long orbit segments to weak$^*$
limits of their empirical measures.  
We now use the relation to show that a
sufficiently long orbit segment in a minimax set cannot avoid a non-empty
relatively open subset while keeping its maximum ergodic average  at least
$\beta(X,T,\Phi)$.
\begin{lemma}\label{lem:avoidance}
Let $\Phi=\{\phi_n\}_{n\geq1}\subset C(X,\mathbb{R})$ be an almost additive
sequence, let $Y$ be a minimax set for $\Phi$, and let
$A\subset Y$ be a nonempty relatively open set.
 There exists \(N_A\geq1\) such that, if \(x\in Y\), \(n\geq N_A\),
and for $0\leq j<n$,
$
 T^jx\notin A,
$
then
\begin{equation}\label{eq:strict-avoidance}
 \frac1n\phi_n(x)<\beta(X,T,\Phi).
\end{equation}
\end{lemma}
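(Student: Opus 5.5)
We argue by contradiction and compactness, using the blocking estimate of Lemma~\ref{lem:blocking} to pass from orbit segments to invariant measures. Suppose no such $N_A$ exists. Then there are points $x_\ell\in Y$ and lengths $n_\ell\to\infty$ such that $T^jx_\ell\notin A$ for $0\leq j<n_\ell$, and
$$
\frac1{n_\ell}\phi_{n_\ell}(x_\ell)\geq\beta(X,T,\Phi).
$$
Consider the empirical measures $\nu_\ell:=\nu_{x_\ell,n_\ell}$. After passing to a subsequence, $\nu_\ell\to\mu$ in the weak$^*$ topology. The standard estimate $\|T_*\nu_\ell-\nu_\ell\|\leq 2/n_\ell$ shows that $\mu$ is $T$-invariant. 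Since $Y$ is closed and every $\nu_\ell$ is supported on $Y$, we also have $\supp\mu\subset Y$, so $\mu\in\M(Y,T)$.

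Next we show that $\mu$ is maximizing. Fix $k\geq1$. Lemma~\ref{lem:blocking} gives
$$
\frac1k\int\phi_k\,d\nu_\ell\geq\beta(\Phi)-\frac{3M_k}{n_\ell}-\frac{C_\Phi}{k}.
$$
Because $\phi_k$ is continuous, letting $\ell\to\infty$ yields $\frac1k\int\phi_k\,d\mu\geq\beta(\Phi)-C_\Phi/k$. Combining this with \eqref{eq:uniform-approx} gives $\Phi_*(\mu)\geq\beta(\Phi)-2C_\Phi/k$. Letting $k\to\infty$ gives $\Phi_*(\mu)\geq\beta(\Phi)$, so $\mu\in\M(Y,T)\cap\Max(X,T,\Phi)$. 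Since $Y$ is minimax, it follows that $\supp\mu=Y$.

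Finally we contradict this full-support conclusion using the avoidance of $A$. Write $A=U\cap Y$ with $U$ open in $X$. Each $\nu_\ell$ gives zero mass to $U$, because its atoms lie in $Y\setminus A\subset X\setminus U$. By the portmanteau theorem, $\mu(U)\leq\liminf_\ell\nu_\ell(U)=0$. On the other hand, $A$ is nonempty and $\supp\mu=Y$, so $U$ is an open set meeting $\supp\mu$, which forces $\mu(U)>0$. This is a contradiction.

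The main technical point is the second step: the empirical measures are not invariant, and $\Phi$ is not additive, so the value $\frac1n\phi_n$ must be linked to a weak$^*$-continuous quantity. The blocking lemma supplies this link, at the cost of the error terms $3M_k/n_\ell$ and $C_\Phi/k$. The order of limits must be respected: first send $\ell\to\infty$ with $k$ fixed, and only then send $k\to\infty$.
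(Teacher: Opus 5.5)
Your proof is correct and follows essentially the same route as the paper. You take a contradicting sequence of avoiding orbit segments and a weak$^*$ limit $\mu\in\M(Y,T)$ of their empirical measures. You use the blocking estimate of Lemma~\ref{lem:blocking} (sending $\ell\to\infty$ before $k\to\infty$, for $\ell$ large enough that $n_\ell\geq k$) to show $\mu$ is maximizing, and the portmanteau theorem to show $\mu$ gives no mass to $A$, contradicting the minimax property. The only differences are cosmetic: you write $A=U\cap Y$ and phrase the contradiction through $\supp\mu=Y$, whereas the paper applies portmanteau on $Y$ directly and concludes $\supp\mu\subset Y\setminus A$.
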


\begin{proof}
Suppose that \eqref{eq:strict-avoidance} fails. Then there exist
sequences $n_i\to\infty$ and $x_i\in Y$ such that for $0\leq j<n_i$, $T^jx_i\notin A$
and
$$
\frac{\phi_{n_i}(x_i)}{n_i}\geq\beta(X,T,\Phi).
$$
Passing to a subsequence, assume
$\nu_{x_i,n_i}
=
\frac1{n_i}\sum_{j=0}^{n_i-1}\delta_{T^jx_i}.$  Since $Y$ is compact and invariant, then \(\nu\in\M(Y,T)\).  Since these measures as probability measures on $Y$, the portmanteau
theorem gives
$
\nu(A)\leq\liminf_{i\to\infty}\nu_{x_i,n_i}(A)=0.
$
Thus $\nu(A)=0$.

Fix $k\geq1$. Since $n_i\to\infty$, Lemma~\ref{lem:blocking} applies
for all sufficiently large $i$ and gives
\begin{align*}			 
	\frac{1}{k}\int\phi_{k}\,d\nu_{x_{i},n_{i}}
	\geq\frac{\phi_{n_{i}}(x_i)}{n_{i}}-\frac{3M_{k}}{n_{i}}-\frac{C_{\Phi}}{k}\geq\beta(X,T,\Phi)-\frac{3M_{k}}{n_{i}}-\frac{C_{\Phi}}{k}.
\end{align*}
Letting \(i\to\infty\), 
$\frac{1}{k}\int\phi_{k}\,d\nu
	\geq \beta(X,T,\Phi)-\frac{C_\Phi}{k}.$  
Combining this with \eqref{eq:uniform-approx},
$$
\Phi_*(\nu)\geq\frac{1}{k}\int\phi_k\,d\nu-\frac{C_{\Phi}}{k}\geq\beta(X,T,\Phi)-\frac{2C_{\Phi}}{k}. 
$$
Letting \(k\to\infty\)
shows that \(\Phi_*(\nu)\geq\beta(X,T,\Phi)\).  Thus \(\nu\) is maximizing.
But \(\supp\nu\subset Y\setminus A\), contradicting the minimax property of
\(Y\).
\end{proof}
Lemma~\ref{lem:avoidance} estimates all sufficiently long orbit segments avoiding $A$, while the  shorter ones are uniformly bounded, yielding the upper bound of  Minimax set.
\begin{proposition}\label{prop:minimax-bound}
Let $\Phi=\{\phi_n\}_{n\geq1}\subset C(X,\mathbb{R})$ be an almost additive sequence  such that \(\beta(X,T,\Phi)=0\) and \(Y\) is minimax for \(\Phi\).  For every
non-empty relatively open set \(A\subset Y\), there is a finite constant
\begin{equation*}
 K_A:=\max_{1\leq r<N_A}\|\phi_r\|_\infty
\end{equation*}
(with \(K_A=0\) if \(N_A=1\)) such that every orbit segment in \(Y\) avoiding
\(A\) satisfies
\begin{equation*}
 \phi_n(x)\leq K_A.
\end{equation*}
\end{proposition}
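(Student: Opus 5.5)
The plan is a two-case split according to whether the length $n$ of the avoiding segment is at least $N_A$ or below it. Fix the non-empty relatively open set $A\subset Y$ and let $N_A\geq1$ be the threshold furnished by Lemma~\ref{lem:avoidance}, applied to $\Phi$ and the minimax set $Y$. Recall that an orbit segment $(x,n)$ in $Y$ avoiding $A$ means $x\in Y$, $n\geq1$, and $T^jx\notin A$ for $0\leq j<n$. The constant $K_A$ in the statement is finite because it is a maximum of finitely many sup-norms of continuous functions on a compact space. When $N_A=1$ the index set is empty and $K_A=0$ by convention.

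\emph{Long segments.} If $n\geq N_A$, then all hypotheses of Lemma~\ref{lem:avoidance} hold. The lemma gives $\frac1n\phi_n(x)<\beta(X,T,\Phi)=0$, so $\phi_n(x)<0\leq K_A$. Note that $K_A\geq0$ in all cases, since it is either a maximum of norms or $0$.

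\emph{Short segments.} If $1\leq n<N_A$, then $N_A\geq2$ and $K_A$ is the maximum over $1\leq r<N_A$. We simply bound $\phi_n(x)\leq\|\phi_n\|_\infty\leq K_A$; avoidance of $A$ plays no role here.

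Combining the two cases gives $\phi_n(x)\leq K_A$ for every orbit segment in $Y$ avoiding $A$. The statement thus follows from Lemma~\ref{lem:avoidance} together with the normalization $\beta(X,T,\Phi)=0$, which we may assume by the preceding Remark. There is no serious obstacle. The only points needing care are that $N_A$ depends only on $A$, and not on $x$ or $n$ (this is exactly what Lemma~\ref{lem:avoidance} provides), and the degenerate case $N_A=1$, which is handled by the convention $K_A=0$.
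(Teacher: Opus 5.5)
Your proposal is correct and follows essentially the same argument as the paper: for $n\geq N_A$ you apply Lemma~\ref{lem:avoidance} to get $\phi_n(x)<0\leq K_A$, and for $n<N_A$ you use the definition of $K_A$. You also make explicit two points the paper leaves implicit, namely $K_A\geq0$ and the degenerate case $N_A=1$. Your appeal to the normalization Remark is harmless but unnecessary, since $\beta(X,T,\Phi)=0$ is already a hypothesis of the statement.
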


\begin{proof}
If \(n\geq N_A\), Lemma~\ref{lem:avoidance} gives \(\phi_n(x)<0\).  If
\(n<N_A\), by the definition of \(K_A\).
\end{proof}

For a closed non-empty invariant set \(Z\), write
$
 \beta(Z,T,\Phi)=\max_{\mu\in\M(Z,T)}\Phi_*(\mu),
 \Max(Z,T,\Phi)=\{\mu\in\M(Z,T):\Phi_*(\mu)=\beta(Z,T,\Phi)\}.
$

\begin{lemma}\label{lem:restriction}
Let $(X,T)$ be a TDS and $\Phi=\{\phi_n\}_{n\geq1}\subset C(X,\mathbb{R})$
an almost additive sequence. Suppose that $Z\subset X$ is non-empty,
closed, and $T$-invariant.  Then:
\begin{enumerate}[label=(\alph*)]
 \item \(\beta(Z,T,\Phi)\leq\beta(X,T,\Phi)\);
 \item \(Z\) is \(\Phi\)-maximizable if and only if
 \(\beta(Z,T,\Phi)=\beta(X,T,\Phi)\);
 \item if \(\beta(Z,T,\Phi)=\beta(X,T,\Phi)\), then
 \(\Max(Z,T,\Phi)\subset\Max(X,T,\Phi)\).
\end{enumerate}
\end{lemma}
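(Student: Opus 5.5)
The whole argument rests on the fact that $\M(Z,T)$ sits inside $\M(X,T)$. By the paper's convention, $\M(Z,T)=\{\mu\in\M(X,T):\supp\mu\subset Z\}$, and $Z$ is closed, non-empty and invariant, so $\M(Z,T)$ is a non-empty closed subset of the compact set $\M(X,T)$. It is non-empty because the Krylov--Bogolyubov theorem applied to $T|_Z$ gives an invariant measure on $Z$, which we push forward to $X$. By Lemma~\ref{lem:functional}, the functional $\Phi_*$ is continuous and affine, so $\beta(Z,T,\Phi)$ is a maximum that is actually attained. Its value is computed with the same functional $\Phi_*$ as on $X$; restricting $\phi_n$ to $Z$ does not change $\int\phi_n\,d\mu$ for $\mu$ supported in $Z$. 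I would state this compatibility explicitly, since it is the only point that needs any care.

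For (a), I maximize $\Phi_*$ over the subset $\M(Z,T)\subset\M(X,T)$. A maximum over a smaller set cannot exceed the maximum over the larger one, so $\beta(Z,T,\Phi)\leq\beta(X,T,\Phi)$.

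For (b), I treat the two directions separately.
\begin{enumerate}[label=(\roman*)]
\item Suppose $Z$ is $\Phi$-maximizable. Choose $\mu\in\M(Z,T)\cap\Max(X,T,\Phi)$. Then $\beta(Z,T,\Phi)\geq\Phi_*(\mu)=\beta(X,T,\Phi)$, and together with (a) this gives equality.
\item Conversely, suppose $\beta(Z,T,\Phi)=\beta(X,T,\Phi)$. Take a maximizer $\mu\in\M(Z,T)$ of $\Phi_*$ over $\M(Z,T)$, which exists by attainment. Then $\Phi_*(\mu)=\beta(X,T,\Phi)$, so $\mu\in\Max(X,T,\Phi)\cap\M(Z,T)$, and $Z$ is $\Phi$-maximizable.
\end{enumerate}

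For (c), take any $\mu\in\Max(Z,T,\Phi)$. Then $\mu\in\M(X,T)$ and $\Phi_*(\mu)=\beta(Z,T,\Phi)=\beta(X,T,\Phi)$, so $\mu\in\Max(X,T,\Phi)$.

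There is no real obstacle in this lemma. The only subtle step is the attainment of $\beta(Z,T,\Phi)$ used in the converse direction of (b), and it follows from the compactness of $\M(Z,T)$ and the continuity of $\Phi_*$.
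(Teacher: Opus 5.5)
Your proposal is correct and follows the paper's approach: everything comes from $\M(Z,T)\subset\M(X,T)$ and the definitions. You additionally spell out two points the paper leaves implicit, namely that the maximum over $\M(Z,T)$ is attained (non-emptiness and closedness of $\M(Z,T)$ together with continuity of $\Phi_*$) and that $\Phi_*$ on $Z$ agrees with $\Phi_*$ on $X$ for measures supported in $Z$.
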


\begin{proof}
The three statements follow immediately from
\(\M(Z,T)\subset\M(X,T)\) and the definitions.
\end{proof}

We next consider invariant sets on which every invariant measure is
maximizing. We also distinguish the stronger case in which these measures
are  all the maximizing measures.

\begin{definition}Let $(X,T)$ be a TDS and let
	$\Phi=\{\phi_n\}_{n\geq1}\subset C(X)$ be an almost additive sequence.  A closed non-empty invariant set \(Y\) is \emph{completely maximizing} for
\(\Phi\) if
$$
 \M(Y,T)\subset\Max(X,T,\Phi).
$$
It is \emph{subordination maximizing} if
$$
 \M(Y,T)=\Max(X,T,\Phi).
$$
\end{definition}

It follows immediately from the definition that every completely maximizing
set is maximizable. Moreover, every nonempty closed invariant subset of a
completely maximizing set is itself completely maximizing. 

The following lemma shows that a minimax set is dynamically minimal whenever
it is completely maximizing.
\begin{lemma}\label{lem:minimal}
Let $(X,T)$ be a TDS and let $\Phi=\{\phi_n\}_{n\geq1}\subset C(X,\mathbb{R})$ be an almost additive sequence. If \(Y\) is both minimax and completely maximizing for \(\Phi\), then
\((Y,T)\) is dynamically minimal.
\end{lemma}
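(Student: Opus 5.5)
Suppose $Y$ is both minimax and completely maximizing for $\Phi$. To show that $(Y,T)$ is dynamically minimal, it suffices to prove that every non-empty closed $T$-invariant subset $Y'\subseteq Y$ equals $Y$. The plan is to produce, inside any such $Y'$, a measure that is forced to be maximizing, and then let the minimax property force its support to be all of $Y$.

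Fix a non-empty closed invariant set $Y'\subseteq Y$. By the Krylov--Bogolyubov theorem applied to the compact system $(Y',T|_{Y'})$, there is an invariant Borel probability measure $\mu$ with $\supp\mu\subseteq Y'$; in particular $\mu\in\M(Y',T)\subseteq\M(Y,T)$.

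Since $Y$ is completely maximizing, $\M(Y,T)\subset\Max(X,T,\Phi)$, so $\mu\in\M(Y,T)\cap\Max(X,T,\Phi)$. Because $Y$ is minimax, every such measure has full support, so $\supp\mu=Y$. Combining this with $\supp\mu\subseteq Y'\subseteq Y$ gives $Y'=Y$, and therefore $Y$ has no proper non-empty closed invariant subsets, which is exactly dynamical minimality.

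There is no genuine obstacle here. The only points to check are that $Y'$ is compact and non-empty, so that an invariant measure exists on it, and that $\M(Y',T)\subseteq\M(Y,T)$, which is immediate from the definition $\M(Y,T)=\{\mu\in\M(X,T):\supp\mu\subset Y\}$. The almost additive structure of $\Phi$ plays no role beyond what is already encoded in the two hypotheses.
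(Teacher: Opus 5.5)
Your proof is correct and follows the paper's argument exactly: take a non-empty closed invariant $Y'\subseteq Y$, note that any invariant measure on $Y'$ is maximizing because $Y$ is completely maximizing, and conclude from the minimax property that its support is $Y$, so $Y'=Y$. The only difference is that you explicitly invoke Krylov--Bogolyubov to guarantee that $Y'$ carries an invariant measure, a step the paper leaves implicit.
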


\begin{proof}
Let \(Y'\subset Y\) be a non-empty, closed and invariant set.  Any invariant measure
supported on \(Y'\) is maximizing because \(Y\) is completely maximizing.  Its
support must equal \(Y\) because \(Y\) is minimax.  Hence \(Y'=Y\).
\end{proof}

We next extend the bounded-Birkhoff-sum criterion of Morris \cite{Morris2007}.
The proof is included because the almost-additivity errors have to be controlled
at every concatenation.

\begin{definition}
The potential \(\Phi\subset C(X,\mathbb{R})\) has the \emph{subordination property} if
$$
 \eta\in\Max(X,T,\Phi),\quad
 \nu\in\M(X,T),\quad
 \supp\nu\subset\supp\eta
 \quad\Longrightarrow\quad
 \nu\in\Max(X,T,\Phi).
$$
\end{definition}

\begin{theorem}\label{thm:aa-morris} Let $(X,T)$ be a TDS and let $\Phi=\{\phi_n\}_{n\geq1}\subset C(X,\mathbb{R})$ be an almost additive sequence with 
  \(\beta(X,T,\Phi)=0\) and
\begin{equation}\label{eq:bounded-aa}
 B_\Phi:=\sup_{n\geq1}\sup_{x\in X}\phi_n(x)<\infty.
\end{equation}
Then \(\Phi\) has the subordination property.  Consequently, it admits a
subordination maximizing set.
\end{theorem}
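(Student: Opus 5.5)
The plan is to follow Morris's argument for bounded Birkhoff sums, with the almost additivity errors controlled at each concatenation. Fix $\eta\in\Max(X,T,\Phi)$, set $Y:=\supp\eta$, and take $\nu\in\M(X,T)$ with $\supp\nu\subset Y$. Since $\Max(X,T,\Phi)$ is a face of $\M(X,T)$ (because $\Phi_*$ is affine by Lemma~\ref{lem:functional}), the ergodic components of $\eta$ are maximizing. Likewise, $\nu$ is maximizing as soon as its ergodic components are, since $\Phi_*$ is affine and continuous and hence integrates over the ergodic decomposition. So I will assume $\nu$ is ergodic. The aim is $\Phi_*(\nu)\geq0$, because $\Phi_*(\nu)\leq\beta=0$ always holds.

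The key step is a lower bound of the form $\phi_n(x)\geq -K$ for all $x\in Y$ and all $n\geq1$, with $K$ independent of $n$ and $x$. This gives the result at once: integrating against $\nu$ yields $\frac1n\int\phi_n\,d\nu\geq -K/n\to0$, so $\Phi_*(\nu)\geq0$. To get the bound, I first treat $\eta$-generic points. Since $\eta$ is maximizing with $\Phi_*(\eta)=0$, for $\eta$-a.e.\ $x$ (taking ergodic components) we have $\frac1n\phi_n(x)\to\Phi_*(\eta_x)=0$. The upper bound $B_\Phi$ now does the work. Almost additivity gives
\[
\phi_{n+m}(x)\leq \phi_n(x)+\phi_m(T^nx)+C_\Phi\leq \phi_n(x)+B_\Phi+C_\Phi .
\]
Iterating, $\phi_N(x)\leq \phi_n(x)+(B_\Phi+C_\Phi)$ for $N\geq n$, which is a near-monotonicity in $n$ along forward sums. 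If $\phi_n(x)$ were very negative at some $n$, say $\phi_n(x)<-R$, I would like to force $\limsup_N \frac1N\phi_N<0$. This does not follow directly, because the tail contributes only bounded positive amounts and that alone is not enough. I will instead use the sharper Morris-type argument: for every $x\in X$, the sequence $\phi_n(x)$ is bounded above, and the sums are "almost superadditive upper-bounded". I then define the subaction-like function $u(x):=\sup_{n\geq0}\phi_n(x)$ with $\phi_0:=0$, which satisfies $0\leq u\leq B_\Phi$. From almost additivity,
\[
u(x)\geq \phi_1(x)+u(Tx)-C_\Phi,
\]
that is, $\phi_1(x)\leq u(x)-u(Tx)+C_\Phi$. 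This is only a coboundary inequality up to the constant $C_\Phi$.

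I expect the main obstacle to be removing these accumulating constants. The plan is to work at block level $k$. Let $u_k(x):=\sup_{q\geq0}\phi_{qk}(x)$, which is bounded in $[0,B_\Phi]$. It satisfies
\[
\phi_k(x)\leq u_k(x)-u_k(T^kx)+C_\Phi .
\]
The function $g_k:=u_k-u_k\circ T^k+C_\Phi-\phi_k\geq0$ is bounded and Borel, and integrating against the $T^k$-invariant measure $\eta$ gives
\[
\int g_k\,d\eta=C_\Phi-\int\phi_k\,d\eta\leq 2C_\Phi ,
\]
using \eqref{eq:uniform-approx} with $\Phi_*(\eta)=0$. Summing $g_k$ along $T^k$-orbits gives $\phi_{qk}$ up to the coboundary and $q C_\Phi$. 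Following Morris, I then show that on $\supp\eta$ the nonnegative function $g_k$ has small $\nu$-integral too. The route I would try first is lower semicontinuity: take $u_k$ upper semicontinuous (or regularize), so that $g_k$ is lower semicontinuous. A function that is lower semicontinuous and has a small integral on a full-support measure is small in an averaged sense on $Y$; to compare the two integrals I would pass through the minimax-type avoidance estimate of Lemma~\ref{lem:avoidance} and Proposition~\ref{prop:minimax-bound}. This should yield
\[
\tfrac1k\int\phi_k\,d\nu\geq -O(C_\Phi/k),
\]
and letting $k\to\infty$ gives $\Phi_*(\nu)\geq0$. The consequence then follows by taking $Y=\supp\eta$ for an $\eta$ in the relative interior of $\Max(X,T,\Phi)$, for instance a convex combination of a countable dense family, so that $\supp\eta$ contains every maximizing support. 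The subordination property then gives $\M(Y,T)=\Max(X,T,\Phi)$.
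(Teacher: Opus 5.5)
Your reductions are fine: passing to ergodic $\nu$ works, and so does the implication ``a uniform bound $\phi_n\ge -K$ on $Y=\supp\eta$ gives $\Phi_*(\nu)\ge 0$''. The gap is in the proof of that bound.

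\textbf{Where the subaction route fails.} The block inequality only gives $g_k\ge 0$ with $\int g_k\,d\eta\le 2C_\Phi$. To conclude you need $\int g_k\,d\nu=O(C_\Phi)$ uniformly in $k$. That is the subordination statement itself in disguise, and no step of your plan supplies it.

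\begin{itemize}
\item Semicontinuity cannot give it. Even for continuous $g\ge0$, the bound $\int g\,d\eta\le 2C_\Phi$ says nothing about $\int g\,d\nu$. For example, take $\nu$ periodic on $Y$ with $\eta(\supp\nu)=0$, and $g$ a bump of height $M$ around $\supp\nu$ whose support has $\eta$-measure below $2C_\Phi/M$. Lower semicontinuity only helps when the $\eta$-integral is exactly $0$.
\item That exactness is what makes the subaction argument work in the additive case. There $C_\Phi=0$ forces $g=0$ $\eta$-a.e., so $S_nf=u-u\circ T^n\ge -B_\Phi$ on an $\eta$-full set, and hence on $Y$ by continuity of $S_nf$. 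The constant $C_\Phi$ destroys exactly this.
\item $u_k$ is a supremum of continuous functions, so it is lower, not upper, semicontinuous. Replacing it by a regularization would not preserve $\phi_k\le u_k-u_k\circ T^k+C_\Phi$.
\item Lemma~\ref{lem:avoidance} and Proposition~\ref{prop:minimax-bound} do not help. They assume $Y$ is minimax, which $\supp\eta$ need not be. They also bound $\phi_n$ from above along segments avoiding an open set, whereas you need a comparison of $\eta$- and $\nu$-integrals.
\end{itemize}

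\textbf{The missing idea.} It is the one you set aside after ``a single dip is not enough''. A dip is an open condition, and $\eta$ charges every open set meeting $Y$, so recurrence repeats the dip with positive frequency. Write $B:=\max\{B_\Phi,0\}$.

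\begin{itemize}
\item Suppose $\phi_N(x_0)<-R$ with $x_0\in Y$. By continuity there is an open $U\ni x_0$ with $\phi_N<-R+1$ on $U$, and $\eta(U)>0$.
\item Take an ergodic component $\eta_\omega$ with $\eta_\omega(U)>0$ and an $\eta_\omega$-generic point. Select visits to $U$ that are at least $N$ apart; there are roughly $n\,\eta_\omega(U)/N$ of them before time $n$.
\item Split the orbit segment into the $N$-blocks starting at these visits, each contributing less than $-R+1$, and the gaps between them, each contributing at most $B$. Concatenation costs at most $2C_\Phi$ per visit.
\item This gives $\Phi_*(\eta_\omega)\le -(R-1-B-2C_\Phi)\,\eta_\omega(U)/N$.
\item If $R>B+2C_\Phi+1$, integrating over the ergodic decomposition gives $\Phi_*(\eta)<0$, contradicting maximality of $\eta$.
\end{itemize}

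Hence $\phi_N\ge -(B+2C_\Phi+1)$ on $Y$ for every $N$, which is your key step, and the rest of your plan then goes through. The paper uses this same recurrence argument in contrapositive form, centring $U$ at a point of $\supp\nu$ where $\phi_N$ is very negative.
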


\begin{proof}
We prove the contrapositive form of subordination.  Let
\(\nu,\mu\in\M(X,T)\) satisfy \(\supp\nu\subset\supp\mu\) and
\(\Phi_*(\nu)<0\).  By lemma \ref{lem:functional}, \(\Phi_*\) is affine. Together with the ergodic decomposition theorem,  there are a probability space \((\Omega,\mathcal{F},\mathbb{P})\) and a collection of \(T\)-invariant ergodic Borel probability measure \(\{\mu_{\omega}:\omega\in\Omega\}\) on \(X\) such that 
$$
\Phi_*(\nu)=\int\Phi_*(\nu_{\omega})\,d\mathbb{P}(\omega).
$$
There exist some ergodic measure
\(\nu'\) with \(\Phi_*(\nu')<0\), and
\(\supp\nu'\subset\supp\nu\).

Choose
$$
 D>\max\{B_\Phi,0\}+2C_\Phi+1.
$$
By \eqref{eq:aa-ergodic}, there exist \(x_0\in\supp\nu'\) and \(N\geq1\) such
that \(\phi_N(x_0)<-4D\).  Since $\phi_N\in C(X,\mathbb{R})$, there is an open neighborhood
\(U\ni x_0\) on which \(\phi_N(x)<-3D\) for any \(x\in U\).

Since $\supp\nu\subset\supp\mu$, $\mu(U)>0$. By the ergodic decomposition theorem, 
$$
\mu(U)=\int_{\Omega}\mu_{\omega}(U)\,d\mathbb{P}(\omega).
$$
Let \(\mu_\omega\) be an ergodic measure with
\(\mu_\omega(U)>0\).  Define the return times of $x$ to $U$ by
\(
r_0(x):=0, r_n(x):=
\min\bigl\{k>r_{n-1}(x):T^kx\in U\bigr\}\) for 
\(n\geq1\). 
By the almost additive ergodic theorem and Birkhoff's pointwise
ergodic theorem applied to $\mathbf{1}_U$, we may choose
\(x_0\in U\) such that
$$
\lim_{m\to\infty}\frac{\phi_m(x_0)}{m}
=\Phi_*(\mu_\omega),\qquad
\lim_{m\to\infty}
\frac{1}{m}\sum_{k=0}^{m-1}\mathbf{1}_U(T^kx_0)
=\mu_\omega(U).
$$
Consequently, \(n/r_n(x_0)\to \mu_\omega(U)\). Since \(\mu_{\omega}(U)>0\), 
among the first \(n\)
returns to \(U\), we can choose at least  \(M_{n}=\lfloor n/N\rfloor\) return times separated by at least \(N\), forming an increasing sequence \(\{{n_i}\}_{i=1}^{M_n}\) with \(n_1=0,n_{M_n}=r_{n}(x_{\omega})\), and such that for each \(i\), \(T^{n_{i}}x_{\omega}\in U\) and \(n_{i+1}\geq n_{i}+N\).  Thus, for each \(n\), 
\begin{align*}
	\phi_{r_n}(x)&\leq\phi_{n_{M_{n}}-(n_{M_{n}}-1)}(T^{n_{M_{n}}-1}x)+\phi_{n_{M_{n}-1}}(x)+C_{\Phi}\\ 
	&\leq\phi_{N}(T^{n_{M_{n}}-1}x)+\phi_{n_{M_{n}}-(n_{M_{n}}-1)-N}(T^{n_{M_{n}}-1+N}x)+\phi_{n_{M_{n}}-1}(x)+2C_{\Phi}\\
	&\leq -(3D-B_{\Phi}-2C_{\Phi})+\phi_{n_{M_{n}}-1}(x)\\
	&\leq -(3D-B_\Phi-2C_\Phi)M_n<-DM_n.
\end{align*}
Consequently, 
\begin{align}\label{eq:component-negative}
 \Phi_*(\mu_\omega)=\lim_{n\to\infty}\frac{1}{r_n(x)}\phi_{r_{n}}(x)\leq\lim_{n\to\infty}-\frac{DM_n}{r_n(x)}\leq -\frac{D}{N}\mu_\omega(U)<0.
\end{align}
For components with \(\mu_\omega(U)=0\), \(\beta(X,T,\Phi)=0\) gives
\(\Phi_*(\mu_\omega)\leq0\).  Since \(x_0\in\supp\mu\), one has
\(\mu(U)>0\).  Integrating \eqref{eq:component-negative} over the ergodic
decomposition gives
\begin{align*}
	\Phi_*(\mu)=\int_{\mu_{\omega}(U)>0}\Phi_*(\mu_{\omega})\,d\mathbb{P}(\omega)+\int_{\mu_{\omega}(U)=0}\Phi_*(\mu_{\omega})\,d\mathbb{P}(\omega)\leq-\frac{D}{N}\mu(U)<0.
\end{align*}
Hence \(\mu\) is not maximizing.  This is the contrapositive of the
subordination property.

It remains to construct a subordination maximizing set. We first choose a dense sequence
\(\{\eta_j\}\) in the compact metrizable set \(\Max(X,T,\Phi)\), set
\(\eta=\sum_{j\geq1}2^{-j}\eta_j\), and let \(K=\supp\eta\). Since \(\eta_j\in\Max(X,T,\phi)\) and \(\Max(X,T,\phi)\) is closed and convex. Thus, 
\(\eta\in\Max(X,T,\phi)\).

If \(\rho\in\M(X,T)\) is supported on \(K\), the
subordination property ensures \(\rho\) is \(\Phi\)-maximizing.  Conversely, every maximizing
measure is a weak* limit of a sequence chosen from the dense set
\(\{\eta_j:j\geq1\}\).  Since every \(\eta_j\) is supported on the closed set
\(K\), the limit is supported on \(K\) as well.  Thus
\(\M(K,T)=\Max(X,T,\Phi)\).
\end{proof}
We next give a criterion for a set to be completely maximizing.
\begin{proposition}\label{prop:complete}
Let $(X,T)$ be a TDS and let $\Phi=\{\phi_n\}_{n\geq1}\subset C(X,\mathbb{R})$ be an almost additive sequence such that
\(\beta(X,T,\Phi)=0\).  If \(Y\) is minimax for \(\Phi\) and
\begin{equation*}
 \sup_{n\geq1}\sup_{x\in Y}\phi_n(x)<\infty,
\end{equation*}
then \(Y\) is completely maximizing and dynamically minimal.
\end{proposition}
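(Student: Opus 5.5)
The plan is to apply Theorem~\ref{thm:aa-morris} to the restricted system $(Y,T)$ with the restricted sequence $\Phi|_Y=\{\phi_n|_Y\}_{n\geq1}$. Then Lemma~\ref{lem:minimal} upgrades ``completely maximizing'' to ``dynamically minimal''.

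First I check the hypotheses of Theorem~\ref{thm:aa-morris} on $(Y,T)$. The sequence $\Phi|_Y$ is almost additive on $Y$ with the same constant $C_\Phi$. This holds because \eqref{eq:aa} is a pointwise inequality and $Y$ is invariant. Since $Y$ is minimax, it is $\Phi$-maximizable, so Lemma~\ref{lem:restriction}(b) gives $\beta(Y,T,\Phi)=\beta(X,T,\Phi)=0$. The boundedness hypothesis \eqref{eq:bounded-aa} on $Y$ is exactly the assumption $\sup_n\sup_{x\in Y}\phi_n(x)<\infty$. Hence $\Phi|_Y$ has the subordination property relative to $\M(Y,T)$.

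Next I fix $\mu\in\M(Y,T)\cap\Max(X,T,\Phi)$, which exists since $Y$ is maximizable. By the minimax property, $\supp\mu=Y$. In particular $\mu\in\Max(Y,T,\Phi)$, because $\Phi_*(\mu)=0=\beta(Y,T,\Phi)$. Now take any $\nu\in\M(Y,T)$. Then $\supp\nu\subset Y=\supp\mu$. Subordination on $(Y,T)$ gives $\nu\in\Max(Y,T,\Phi)$, that is, $\Phi_*(\nu)=0=\beta(X,T,\Phi)$. So $\nu\in\Max(X,T,\Phi)$, which is also the content of Lemma~\ref{lem:restriction}(c). This proves $\M(Y,T)\subset\Max(X,T,\Phi)$, so $Y$ is completely maximizing. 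Lemma~\ref{lem:minimal} then gives minimality.

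The only point needing care is that Theorem~\ref{thm:aa-morris} is stated for a TDS $(X,T)$, and we apply it to the subsystem $(Y,T)$. We must make sure that $\Phi_*$ computed on $Y$ agrees with $\Phi_*$ on $X$ for measures supported on $Y$. It does, since $\int\phi_n\,d\mu$ only sees $Y$. Without restricting, the global bound $B_\Phi$ on $X$ is not available, so this restriction is essential rather than cosmetic. I do not expect any further obstacle.
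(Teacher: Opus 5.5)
Your proposal is correct and follows essentially the same route as the paper: apply Theorem~\ref{thm:aa-morris} to the subsystem $(Y,T)$, use the minimax property, and finish with Lemma~\ref{lem:minimal}. The only difference is cosmetic. The paper invokes the subordination maximizing set $Z\subset Y$ with $\M(Z,T)=\Max(Y,T,\Phi)$ and uses minimaxity to force $Z=Y$. You instead apply the subordination property directly to a maximizing measure whose support is all of $Y$, which is slightly more direct and makes the check $\beta(Y,T,\Phi)=0$ explicit.
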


\begin{proof}
Apply Theorem~\ref{thm:aa-morris} to  \((Y,T)\).  It gives a
closed invariant \(Z\subset Y\) with
\( \M(Z,T)=\Max(Y,T,\Phi).\)
Since \(Y\) is minimax for \((X,T,\Phi)\), and hence is \((X,T,\Phi)\) maximizable, Lemma~\ref{lem:restriction} gives
\(\Max(Y,T,\Phi)\subset\Max(X,T,\Phi)\). Each \(\mu\in\M(Z,T)\)
is therefore maximizing on \(X\), and the minimax property of \(Y\) forces  \(\supp(\mu)=Y\).  Hence \(Z=Y\), so
\(\M(Y,T)\subset\Max(X,T,\Phi)\).  Since \(Y\) is minimax and completely maximizing, by Lemma~\ref{lem:minimal}, \(Y\) is dynamically minimal. 
\end{proof}

The avoidance lemma controls orbit segments that do not visit a  
open subset of a minimax set. We now introduce a closing condition under which a minimax set for an orbit
Lipschitz almost additive sequence is completely maximizing and dynamically
minimal. 
\begin{definition}
	A non-empty subset \(A\subset X\) has the \emph{closing property relative to
		\((X,T)\)} if there exists \(C_A>0\) such that, whenever \(x\in A\) and
	\(T^nx\in A\) for some \(n\geq1\), there exists \(z=T^nz\) satisfying
	\begin{equation*}
		\sum_{j=0}^{n-1}d(T^jx,T^jz)\leq C_A.
	\end{equation*}
	A closed invariant set \(Y\subset X\) has the \emph{subset closing property}
	if some non empty relatively open set \(A\subset Y\) has the closing property
	relative to \((X,T)\).
\end{definition}
\begin{theorem}\label{thm:subset-closing}
	Let $(X,T)$ be a TDS, and let
	$\Phi=\{\phi_n\}_{n\geq1}\in
	\mathcal{AA}_{\rm orb}(X,T)$.
	If $Y\subset X$ is a minimax set for $\Phi$ and $Y$ has the
	subset closing property, then $Y$ is completely maximizing for
	$\Phi$ and dynamically minimal.
\end{theorem}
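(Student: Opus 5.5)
We normalize first and then reduce to Proposition~\ref{prop:complete}. Replacing $\Phi$ by $\widehat\Phi$ (subtracting $n\beta$) changes neither $\mathcal{AA}_{\rm orb}$-membership nor the maximizing measures or minimax sets, so we may assume $\beta(X,T,\Phi)=0$. By Proposition~\ref{prop:complete}, it then suffices to show
$$
\sup_{n\geq1}\sup_{y\in Y}\phi_n(y)<\infty .
$$
Let $A\subset Y$ be a non-empty relatively open set with the closing property relative to $(X,T)$, with constant $C_A$. Proposition~\ref{prop:minimax-bound} already bounds every orbit segment in $Y$ that avoids $A$ by $K_A$. What remains are the segments that visit $A$.

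Fix $y\in Y$ and $n\geq1$. If $T^jy\notin A$ for all $0\leq j<n$, we are done. Otherwise, let $a$ be the first time and $b$ the last time in $[0,n)$ with $T^ay,T^by\in A$. Split the segment by almost additivity into three pieces: $[0,a)$, $[a,b)$ and $[b,n)$. This costs at most $2C_\Phi$. The initial piece $[0,a)$ avoids $A$, so it is bounded by $K_A$, or is empty.

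The final piece $[b,n)$ visits $A$ only at its first point. Split it further as the single step $\phi_1(T^by)\leq\|\phi_1\|_\infty$ plus the segment from $b+1$ to $n$, which avoids $A$. This gives a bound $\|\phi_1\|_\infty+K_A+C_\Phi$.

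The middle piece is the key step. Set $x=T^ay\in A$ and $m=b-a$. If $m=0$ the piece is empty. Otherwise $T^mx\in A$, so the closing property gives a periodic point $z=T^mz$ with $d_m^{(1)}(x,z)\leq C_A$. Orbit Lipschitz continuity then gives
$$
\phi_m(x)\leq\phi_m(z)+L_{\rm orb}(\Phi)\,C_A .
$$
To bound $\phi_m(z)$, use that $z$ is periodic. The measure $\mu_z$ on its orbit is invariant, so $0=\beta\geq\Phi_*(\mu_z)$. Iterating almost additivity along the period gives $\phi_{qm}(z)\geq q\phi_m(z)-(q-1)C_\Phi$, hence
$$
\Phi_*(\mu_z)=\lim_{q\to\infty}\frac{\phi_{qm}(z)}{qm}\geq\frac{\phi_m(z)-C_\Phi}{m}.
$$
This pointwise version for a periodic orbit is exactly the same Fekete argument as Lemma~\ref{lem:functional}. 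Therefore $\phi_m(z)\leq C_\Phi$. Note that $z$ need not lie in $Y$; this does not matter, because $\beta$ is taken over all of $X$ and $\Phi$ is orbit Lipschitz on all of $X$.

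Combining the three pieces, for all $y\in Y$ and $n\geq1$,
$$
\phi_n(y)\leq 2K_A+\|\phi_1\|_\infty+L_{\rm orb}(\Phi)C_A+5C_\Phi .
$$
This is a uniform bound, so Proposition~\ref{prop:complete} yields that $Y$ is completely maximizing and dynamically minimal.

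The main obstacle is the middle-piece estimate: turning the closing property into a bound on $\phi_m(x)$ that does not depend on $m$. This is where both orbit Lipschitz continuity (with the $\ell^1$ orbit metric matched to the closing sum) and the periodic-point bound $\phi_m(z)\leq C_\Phi$ are needed. The remaining steps are routine bookkeeping of the almost-additivity errors at each concatenation.
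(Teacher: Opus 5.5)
Your proposal is correct and follows essentially the same route as the paper: normalize to $\beta(X,T,\Phi)=0$, reduce to the uniform upper bound required by Proposition~\ref{prop:complete}, split the orbit segment at the first and last visits to $A$, bound the middle block using the closing point $z$, orbit Lipschitz continuity and $\phi_m(z)\leq C_\Phi$, and bound the two end blocks using $K_A$. Your constant $5C_\Phi$ is only a looser count than the paper's $4C_\Phi$, and this does not affect the argument.
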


\begin{proof}
	Without loss of generality, assume \(\beta(X,T,\Phi)=0\).  By
	Proposition~\ref{prop:complete}, it suffices to prove
	\begin{equation}\label{eq:goal-bound}
		\sup_{n\geq1}\sup_{x\in Y}\phi_n(x)<\infty.
	\end{equation}
	Choose a non empty relatively open set \(A\subset Y\) with closing constant
	\(C_A\).  Let \(N_A\) and \(K_A\) be given by
	Proposition~\ref{prop:minimax-bound}.  If the orbit segment
	\(x,Tx,\ldots,T^{n-1}x\notin A\), then \(\phi_n(x)\leq K_A\).
	
	Now assume that $T^jx\in A$ for some $0\leq j<n$. Set
	$
	n_1:=\min\{0\leq j<n:T^jx\in A\},$ 
	$n_2:=\max\{0\leq j<n:T^jx\in A\},$  hence $\phi_{n_1}(x)\leq K_{A}$. 
	If \(p:=n_2-n_1\geq1\), put
	\(y=T^{n_1}x\).  The closing property gives a point \(z=T^pz\) such that
	$
	\sum_{j=0}^{p-1}d(T^jy,T^jz)\leq C_A.
	$
	Let $\mu_z$ be the periodic measure supported on the orbit of $z$. Since
	$\mu_z\in\M(X,T)$ and $\beta(X,T,\Phi)=0$, we have
	$
	\Phi_*(\mu_z)\leq\beta(X,T,\Phi)=0.
	$  Repeated
	use of the  almost additive inequality gives
	$
	\phi_{kp}(z)\geq k\phi_p(z)-(k-1)C_\Phi.
	$
	After division by \(kp\) and take $k\to\infty$,
	$
	0\geq\Phi_*(\mu_z)\geq(\phi_p(z)-C_\Phi)/p,
	$
	so \(\phi_p(z)\leq C_\Phi\).  The orbit Lipschitz condition now gives
	\begin{equation}\label{eq:middle-bound}
		\phi_p(y)\leq C_\Phi+L_{\rm orb}(\Phi)C_A.
	\end{equation}
	Since for $n_2<j<n,$ $
	T^jx\notin A$, hence \begin{equation}\label{eq:tail-bound}
		\phi_{n-n_2}(T^{n_2}x)
		\leq
		\|\phi_1\|_\infty+K_A+C_\Phi.
	\end{equation}
	Finally,  combining
	\eqref{eq:middle-bound} and \eqref{eq:tail-bound} gives  
	\begin{equation*}
		\phi_n(x)
		\leq 2K_A+\|\phi_1\|_\infty+L_{\rm orb}(\Phi)C_A+4C_\Phi.
	\end{equation*}
	If $n_1=n_2$, the above estimate still holds. Therefore,
	\eqref{eq:goal-bound} holds. The result follows from
	Proposition~\ref{prop:complete}.
\end{proof}

\section{Global  typical periodic optimization}
This section studies TPO in the full space $\cE_{\rm orb}$. We introduce global
maximizable families and prove the Baire and subsystem reduction results.
We then establish periodic orbit domination and global locking, which lead
to equivalent forms of global TPO and the global structural theorem.
\begin{definition}
	Let $(X,T)$ be a TDS, and let $\cE_{\rm orb}=\cE_{\rm orb}(X,T)$ be the quotient Banach space of orbit Lipschitz almost additive sequences. For $\eta\in\cE_{\rm orb}$, write
	$
	\beta(\eta):=\sup_{\mu\in\M(X,T)}\eta_*(\mu)
	$
	and
	$\Max(\eta)	:=	\{\mu\in\M(X,T):\eta_*(\mu)=\beta(\eta)\}.$ 
	For any $\mathscr N\subset\M(X,T)$, define
	\begin{align*}
		\cE_{\mathscr N}
		:=\{\eta\in\cE_{\rm orb}:
		\Max(\eta)\cap\mathscr N\neq\varnothing\},\qquad
		\cE_{\mathscr N}^{\subset}
		:=\{\eta\in\cE_{\rm orb}:
		\Max(\eta)\subset\mathscr N\}.
	\end{align*}
	If $Z\subset X$ is a non-empty closed $T$-invariant set, write
	$
	\cE_Z:=\cE_{\M(Z,T)}.$ 
	Hence, $\eta\in\cE_Z$ if and only if $Z$ supports at least one $\eta$-maximizing measure.
\end{definition}

\begin{lemma}\label{g:lem:closedness-global}
	Let $(X,T)$ be a TDS. If $\mathscr N$ is closed in $\M(X,T)$, then
	$\cE_{\mathscr N}$ is closed in $\cE_{\rm orb}$.  In particular,
	$\cE_Z$ is closed for every closed invariant set $Z$.
\end{lemma}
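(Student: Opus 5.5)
We argue by sequential closedness, since $\cE_{\rm orb}$ is a Banach space (Corollary~\ref{cor:orb-quotient}) and hence metrizable. Let $\eta_k\in\cE_{\mathscr N}$ with $\eta_k\to\eta$ in $\|\cdot\|_{\cE_{\rm orb}}$. For each $k$ choose $\mu_k\in\Max(\eta_k)\cap\mathscr N$. Since $\M(X,T)$ is weak$^*$ compact and metrizable, after passing to a subsequence we may assume $\mu_k\to\mu$ weak$^*$. Because $\mathscr N$ is closed, $\mu\in\mathscr N$. It then remains to show that $\mu\in\Max(\eta)$.

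The key estimate is the uniform bound from Corollary~\ref{cor:orb-quotient}:
$$
\sup_{\nu\in\M(X,T)}|(\eta_k)_*(\nu)-\eta_*(\nu)|\leq\|\eta_k-\eta\|_{\cE_{\rm orb}}=:\varepsilon_k\to0.
$$
This requires that $\eta\mapsto\eta_*(\nu)$ is linear on $\cE_{\rm orb}$. That holds because $\Phi\mapsto\Phi_*(\nu)$ is linear on representatives and vanishes on $\mathcal N$.

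From the uniform bound we get $|\beta(\eta_k)-\beta(\eta)|\leq\varepsilon_k$. We then estimate
$$
\eta_*(\mu)\geq \eta_*(\mu_k)-|\eta_*(\mu)-\eta_*(\mu_k)|\geq (\eta_k)_*(\mu_k)-\varepsilon_k-|\eta_*(\mu)-\eta_*(\mu_k)| = \beta(\eta_k)-\varepsilon_k-|\eta_*(\mu)-\eta_*(\mu_k)|,
$$
and the right-hand side is at least $\beta(\eta)-2\varepsilon_k-|\eta_*(\mu)-\eta_*(\mu_k)|$. The last term tends to $0$ because $\eta_*=\Phi_*$ is weak$^*$ continuous on $\M(X,T)$ by Lemma~\ref{lem:functional}, applied to any representative $\Phi$ of $\eta$. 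Letting $k\to\infty$ gives $\eta_*(\mu)\geq\beta(\eta)$, so $\mu\in\Max(\eta)\cap\mathscr N$ and $\eta\in\cE_{\mathscr N}$.

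For the particular case, it suffices that $\M(Z,T)$ is weak$^*$ closed when $Z$ is closed and invariant. If $\nu_k\to\nu$ with $\nu_k(Z)=1$, the portmanteau theorem gives $\nu(Z)\geq\limsup_k\nu_k(Z)=1$. There is no real obstacle here. The only point requiring care is the uniform comparison of the functionals $\eta_*$ across $k$, and this is supplied exactly by the inequality $\sup_\mu|\eta_*(\mu)|\leq\|\eta\|_{\cE_{\rm orb}}$.
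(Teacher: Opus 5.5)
Your proof is correct and is essentially the paper's argument: you take a weak$^*$ convergent subsequence of $\mu_k\in\Max(\eta_k)\cap\mathscr N$, use the uniform bound $\sup_{\nu}|(\eta_k)_*(\nu)-\eta_*(\nu)|\leq\|\eta_k-\eta\|_{\cE_{\rm orb}}$ together with weak$^*$ continuity of $\eta_*$, and pass to the limit. Comparing with $\beta(\eta)$ rather than with each $\nu$ separately is an inessential variation, and your portmanteau check that $\M(Z,T)$ is weak$^*$ closed fills in the ``in particular'' step that the paper leaves implicit.
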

\begin{proof}
	Let $\eta_k\to\eta$ in $\cE_{\rm orb}$. For each $k\geq1$, choose
	$
	\mu_k\in\Max(\eta_k)\cap\mathscr N.
	$
	Since $\M(X,T)$ is weak$^*$ compact, we may pass to a subsequence.
	Without loss of generality, assume that
	$
	\mu_k\longrightarrow\mu
	$
	for some $\mu\in\M(X,T)$. Since $\mathscr N$ is   closed, we have $\mu\in\mathscr N$.
	
	Since
	$$
	\sup_{\rho\in\M(X,T)}
	|(\eta_k)_*(\rho)-\eta_*(\rho)|
	\leq
	\|\eta_k-\eta\|_{\cE_{\rm orb}}
	\to0.
	$$
	Moreover, the map
	$
	\rho\longmapsto\eta_*(\rho)
	$
	is continuous in the weak$^*$ topology. Therefore,
	$$\left|(\eta_k)_*(\mu_k)-\eta_*(\mu)\right|
	\leq
	\left|(\eta_k)_*(\mu_k)-\eta_*(\mu_k)\right|+\left|\eta_*(\mu_k)-\eta_*(\mu)\right|
	\to0.$$
	Thus,
	$
	(\eta_k)_*(\mu_k)\to\eta_*(\mu).
	$
	
	For each $\nu\in\M(X,T)$, the maximality of $\mu_k$ gives
	$
	(\eta_k)_*(\mu_k)\geq(\eta_k)_*(\nu).
	$
	Taking $k\to\infty$, we  obtain
	$
	\eta_*(\mu)\geq\eta_*(\nu)
	$
	for every $\nu\in\M(X,T)$. Hence
	$
	\mu\in\Max(\eta).
	$
	Therefore, 
	$\cE_{\mathscr N}$ is closed in $\cE_{\rm orb}$.
\end{proof}

We now introduce the global  maximizable family.   A family of closed invariant sets is called globally maximizable
if every class $\eta\in\cE_{\rm orb}$ has a maximizing measure supported on
at least one set in the family.  Thus, the corresponding sets $\cE_Z$ cover
the whole space $\cE_{\rm orb}$.  For a countable family, this covering and
the Baire category theorem give the open dense set in
Theorem~\ref{g:thm:Baire-global}.
\begin{definition}
	A collection $\gZ$ of closed invariant subsets of $X$ is
	\emph{$\cE_{\rm orb}$-maximizable} if, for every
	$\eta\in\cE_{\rm orb}$, some $Z\in\gZ$ is maximizable for
	$\eta$.  Equivalently,
	\begin{equation*}
		\cE_{\rm orb}=\bigcup_{Z\in\gZ}\cE_Z.
	\end{equation*}
\end{definition}

\begin{theorem}\label{g:thm:Baire-global}
	If $\gZ=\{Z_i:i\geq1\}$ is a countable
	$\cE_{\rm orb}$-maximizable family, then
	\begin{equation*}
		\bigcup_{i\geq1}\interior(\cE_{Z_i})
	\end{equation*}
	is open and dense in $\cE_{\rm orb}$.
\end{theorem}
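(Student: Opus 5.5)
The proof combines the Baire category theorem with the closedness established in Lemma~\ref{g:lem:closedness-global}. Openness is immediate, since a union of interiors is open. The content of the statement is density.

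First, each $\cE_{Z_i}$ is closed in $\cE_{\rm orb}$. Indeed, $\M(Z_i,T)$ is weak$^*$ closed in $\M(X,T)$, because $Z_i$ is closed, so a weak$^*$ limit of measures supported on $Z_i$ is again supported on $Z_i$. Lemma~\ref{g:lem:closedness-global} then applies with $\mathscr N=\M(Z_i,T)$.

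Second, to prove density, fix a nonempty open set $U\subset\cE_{\rm orb}$. We need some $i$ with $U\cap\interior(\cE_{Z_i})\neq\varnothing$.

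1. Shrink $U$ to an open ball, and then take a closed ball $B$ inside it with nonempty interior.
2. Since $\cE_{\rm orb}$ is Banach by Corollary~\ref{cor:orb-quotient}, $B$ is a complete metric space.
3. The maximizability hypothesis gives
$$
B=\bigcup_{i\geq1}\bigl(B\cap\cE_{Z_i}\bigr),
$$
a countable union of closed subsets of $B$.
4. By the Baire category theorem, some $B\cap\cE_{Z_i}$ has nonempty interior relative to $B$. So there is an open set $W\subset\cE_{\rm orb}$ with $\varnothing\neq W\cap B\subset\cE_{Z_i}$.

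Third, we pass from relative interior to genuine interior. Intersect $W$ with the open ball $\interior B$. The set $W\cap\interior B$ is nonempty: $W\cap B$ is a nonempty relatively open subset of $B$, and the interior of a closed ball is dense in the ball in a normed space. Hence $W\cap\interior B$ is a nonempty open subset of $\cE_{\rm orb}$ contained in $\cE_{Z_i}$, so it lies in $\interior(\cE_{Z_i})$, and also in $U$.

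An equivalent route avoids this step: apply Baire directly to the open subspace $U$, which is a Baire space because it is an open subset of a complete metric space. Then $U\cap\cE_{Z_i}$ has nonempty relative interior in $U$ for some $i$, and relative interior inside an open subspace is genuine interior.

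The only step needing care is this passage from relative to genuine interior. Using the open subspace $U$ as the Baire space handles it directly, so I do not expect any serious obstacle. The analytic input is entirely the closedness lemma and completeness of $\cE_{\rm orb}$.
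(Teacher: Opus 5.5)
Your proof is correct and is essentially the paper's argument. The paper also gets closedness of each $\cE_{Z_i}$ from Lemma~\ref{g:lem:closedness-global}, writes the nonempty open set $U$ as $\bigcup_{i\geq1}(U\cap\cE_{Z_i})$, applies Baire to obtain a set with nonempty relative interior in $U$, and uses openness of $U$ to get $U\cap\interior(\cE_{Z_i})\neq\varnothing$. Your ``equivalent route'' is exactly this, and the closed-ball version is a valid but unnecessary detour.
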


\begin{proof}
	By the definition of an $\cE_{\rm orb}$ maximizable family,
	$\cE_{\rm orb}=\bigcup_{i\geq1}\cE_{Z_i}.$  
	By Lemma~\ref{g:lem:closedness-global}, each $\cE_{Z_i}$ is closed in
	$\cE_{\rm orb}$. Since $\cE_{\rm orb}$ is a Banach space, it is a
	Baire space.
	
	Let $U\subset\cE_{\rm orb}$ be a non-empty open set. Then $	U=\bigcup_{i\geq1}\bigl(U\cap\cE_{Z_i}\bigr),$ where each $U\cap\cE_{Z_i}$ is closed in $U$. By the Baire category
	theorem, there exists $i\geq1$ such that $U\cap\cE_{Z_i}$ has
	non-empty interior relative to $U$. Since $U$ is open in
	$\cE_{\rm orb}$, it follows that
	$U\cap\interior(\cE_{Z_i})\neq\varnothing.$  
	Thus every non-empty open subset of $\cE_{\rm orb}$ meets
	$\bigcup_{i\geq1}\interior(\cE_{Z_i})$, so this union is dense.
	It is open because it is a union of open sets.
\end{proof}

We next compare the two global optimization classes
$\cE_{\mathscr N}$ and $\cE_{\mathscr N}^{\subset}$. The following
lemma shows that they have the same interior when $\mathscr N$ is
closed and convex. Thus, on an open set, requiring at least one
maximizing measure to belong to $\mathscr N$ is equivalent to
requiring all maximizing measures to belong to $\mathscr N$.

\begin{lemma}\label{g:lem:common-interior-global}
	Let $(X,T)$ be a TDS. If $\mathscr N\subset\M(X,T)$ is closed and convex, then
	$$\interior(\cE_{\mathscr N})
	=\interior(\cE_{\mathscr N}^{\subset}).	$$
\end{lemma}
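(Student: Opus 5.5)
The inclusion $\cE_{\mathscr N}^{\subset}\subset\cE_{\mathscr N}$ holds because $\Max(\eta)\neq\varnothing$ for every $\eta$. Hence $\interior(\cE_{\mathscr N}^{\subset})\subset\interior(\cE_{\mathscr N})$ is immediate, and the work is the reverse inclusion. Fix $\eta\in\interior(\cE_{\mathscr N})$ and suppose, for contradiction, that $\eta\notin\interior(\cE^{\subset}_{\mathscr N})$. It suffices to show that every neighbourhood of $\eta$ contains a class lying outside $\cE_{\mathscr N}$. Since $\interior(\cE_{\mathscr N})$ is open, it is enough to prove that $\eta\in\cE^{\subset}_{\mathscr N}$ for every $\eta\in\interior(\cE_{\mathscr N})$, that is, $\Max(\eta)\subset\mathscr N$. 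Applying this to every point of the open set $\interior(\cE_{\mathscr N})$ then gives $\interior(\cE_{\mathscr N})\subset\cE^{\subset}_{\mathscr N}$, hence $\interior(\cE_{\mathscr N})\subset\interior(\cE^{\subset}_{\mathscr N})$.

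So let $\eta=[\Phi]\in\interior(\cE_{\mathscr N})$ and assume some $\mu\in\Max(\eta)\setminus\mathscr N$. Because $\mathscr N$ is closed and convex in the locally convex space of signed measures with the weak$^*$ topology, Hahn–Banach separation gives a continuous $g$ with $\int g\,d\mu>\sup_{\nu\in\mathscr N}\int g\,d\nu$. By uniform density of $\Lip(X)$ in $C(X,\mathbb R)$ we may take $g=f\in\Lip(X)$, keeping a strict gap $\int f\,d\mu>\sup_{\mathscr N}\int f\,d\nu+\gamma$ with $\gamma>0$ (the supremum is a maximum by compactness of $\mathscr N$). Now perturb: $\eta_t:=[\Phi+tS_\bullet f]$ for small $t>0$. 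First I verify $S_\bullet f\in\mathcal{AA}_{\rm orb}(X,T)$. We have $D(S_\bullet f)=0$ and $L_{\rm orb}(S_\bullet f)\leq|f|_{\Lip}$, as in the proof of Theorem~\ref{thm:orb-banach-dense}. Also $\|S_\bullet f\|_{\mathcal A}\le\|f\|_\infty$, assuming the strong sequence norm is $\sup_n\frac1n\|\psi_n\|_\infty$. Therefore $\|\eta_t-\eta\|_{\cE_{\rm orb}}\leq t\|f\|_{\Lip}\to0$, and $\eta_t\in\cE_{\mathscr N}$ for small $t$.

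Next I show $\Max(\eta_t)\cap\mathscr N=\varnothing$, giving the contradiction. For $\nu\in\mathscr N$,
$$(\eta_t)_*(\nu)=\eta_*(\nu)+t\int f\,d\nu\leq\beta(\eta)+t\int f\,d\nu<\beta(\eta)+t\int f\,d\mu-t\gamma<(\eta_t)_*(\mu),$$
using $\eta_*(\mu)=\beta(\eta)$. Hence no $\nu\in\mathscr N$ is $\eta_t$-maximizing. This contradicts $\eta_t\in\cE_{\mathscr N}$, so $\Max(\eta)\subset\mathscr N$. Closedness of $\mathscr N$ is what makes separation possible; convexity is used there as well. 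Lemma~\ref{g:lem:closedness-global} is not needed.

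The main obstacle is the separation step. It requires identifying $\M(X,T)$ inside $C(X,\mathbb R)^*$ with the weak$^*$ topology, whose continuous functionals are exactly integration against continuous functions, and then passing from a continuous separating function to a Lipschitz one. That passage is handled by the uniform approximation together with the strict gap. A secondary check is that additive Lipschitz sequences lie in $\mathcal{AA}_{\rm orb}(X,T)$ with norm controlled by $\|f\|_{\Lip}$, so the perturbation is genuinely small in $\cE_{\rm orb}$.
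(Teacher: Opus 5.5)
Your proposal is correct and takes essentially the same route as the paper. You separate $\mu\in\Max(\eta)\setminus\mathscr N$ from $\mathscr N$ by a continuous function, replace it by a Lipschitz $f$ with a uniform gap, and show that $\eta+t[S_\bullet f]$ leaves $\cE_{\mathscr N}$ for every $t>0$, contradicting $\eta\in\interior(\cE_{\mathscr N})$; you also make explicit two points the paper leaves implicit, namely $\|t[S_\bullet f]\|_{\cE_{\rm orb}}\leq t\|f\|_{\Lip}$ and the use of openness to pass from $\interior(\cE_{\mathscr N})\subset\cE^{\subset}_{\mathscr N}$ to $\interior(\cE_{\mathscr N})\subset\interior(\cE^{\subset}_{\mathscr N})$.
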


\begin{proof}
	It suffices to prove
	$
	\interior(\cE_{\mathscr N})
	\subset
	\interior(\cE_{\mathscr N}^{\subset}).
	$
	If $\mathscr N=\varnothing$, the conclusion is immediate. Thus, assume
	that $\mathscr N\neq\varnothing$. 
	
	Suppose that
	$\eta\in\interior(\cE_{\mathscr N})$ and let
	$\mu\in\Max(\eta)\setminus\mathscr N$. By the strong separation theorem
	\cite[Corollary~5.59]{AliprantisBorder1999}, there exists
	$f\in C(X,\mathbb R)$ such that for any 
	$\delta>0$, $\int f_0\,d\mu\geq3\delta$,  and
	$
	\sup_{\nu\in\mathscr N}\int f_0\,d\nu\leq0.
	$
	Since $\Lip(X)$ is dense in $C(X,\mathbb{R})$, choose $f\in\Lip(X)$ such that  $\|f-f_0\|_{\infty}<\delta$. Then $\int f\,d\mu\geq2\delta$, $\sup_{\nu\in\mathscr N}\int f\,d\nu\leq\delta$.  The additive class $[S_\bullet f]$ belongs to
	$\cE_{\rm orb}$.  For every $t>0$ and every $\nu\in\mathscr N$,
	$(\eta+t[S_\bullet f])_*(\mu)
	-(\eta+t[S_\bullet f])_*(\nu)
	\geq t\left(\int f\,d\mu-\int f\,d\nu\right)
	\geq t\delta.$
	Hence, $ \Max\bigl(\eta+t[S_\bullet f]\bigr)\cap\mathscr N
	=\varnothing.$
	Since
	$t[S_\bullet f]\to0$ in $\cE_{\rm orb}$, this contradicts
	$\eta\in\interior(\cE_{\mathscr N})$.
\end{proof}

Theorem \ref{g:thm:Baire-global} gives an open dense set where some $Z_i$ supports a maximizing measure. By the lemma \ref{g:lem:common-interior-global}, on the same open dense set, all maximizing measures are supported on $Z_i$. This gives the following corollary.

\begin{corollary}
	Under the hypotheses of Theorem~\ref{g:thm:Baire-global}, the set $$ \bigcup_{i\geq1}\interior(\cE_{Z_i}) = \bigcup_{i\geq1} \interior\bigl(\cE_{\M(Z_i,T)}^{\subset}\bigr) $$ is open and dense in $\cE_{\rm orb}$.
\end{corollary}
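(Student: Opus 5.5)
The corollary combines Theorem~\ref{g:thm:Baire-global} with Lemma~\ref{g:lem:common-interior-global}, applied one index at a time. So the plan is to prove the set identity
$$
\interior(\cE_{Z_i})=\interior\bigl(\cE^{\subset}_{\M(Z_i,T)}\bigr)\qquad(i\geq1),
$$
and then take the union over $i$. Once this identity holds, the left-hand union is exactly the set that Theorem~\ref{g:thm:Baire-global} shows to be open and dense. The right-hand union is therefore the same set and inherits both properties.

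To apply Lemma~\ref{g:lem:common-interior-global} with $\mathscr N=\M(Z_i,T)$, I first check that this set is closed and convex in $\M(X,T)$.

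\textbf{Convexity.} If $\mu,\nu$ are invariant and supported in $Z_i$, then $\supp(t\mu+(1-t)\nu)=\supp\mu\cup\supp\nu\subset Z_i$.

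\textbf{Closedness.} Let $\mu_k\to\mu$ weak$^*$ with $\mu_k(Z_i)=1$. Since $Z_i$ is closed, the portmanteau theorem gives $\mu(Z_i)\geq\limsup_k\mu_k(Z_i)=1$. Hence $\supp\mu\subset Z_i$, and $\mu$ is invariant as a weak$^*$ limit of invariant measures.

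Both conditions hold, and $\M(Z_i,T)\neq\varnothing$ by Krylov--Bogolyubov. The lemma then gives $\interior(\cE_{\M(Z_i,T)})=\interior(\cE^{\subset}_{\M(Z_i,T)})$. By definition $\cE_{Z_i}=\cE_{\M(Z_i,T)}$, so this is the identity above.

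Taking the union over $i\geq1$, the two unions in the statement coincide. Openness is automatic, since each set is a union of interiors. Density is inherited directly from Theorem~\ref{g:thm:Baire-global}, whose hypotheses are exactly those assumed here.

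The only substantive point is verifying that $\M(Z_i,T)$ is closed, which is the portmanteau argument above. No new estimates are needed. The heavy lifting, namely the Baire argument and the separation-based perturbation by a Lipschitz additive class, is already done in the cited results.
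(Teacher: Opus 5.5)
Your proposal is correct and follows the paper's own argument: the paper likewise applies Lemma~\ref{g:lem:common-interior-global} with $\mathscr N=\M(Z_i,T)$, which is closed and convex, to identify the two unions, and takes openness and density from Theorem~\ref{g:thm:Baire-global}. Your convexity check and portmanteau argument for closedness only spell out what the paper asserts without proof.
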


\begin{proof}
	Since $\M(Z_i,T)$ is closed and convex, Lemma~\ref{g:lem:common-interior-global} gives the equality.
\end{proof}

\begin{remark}
	Theorem~\ref{g:thm:Baire-global} is the global orbit Lipschitz version of the
	countable maximizing family.  Combined with the locking
	theorem, it supplies the topological part of the TPO framework.  To deduce a
	full structural theorem from TPO hypotheses on the subsystems $Z_i$, one must
	also transfer small perturbations from $Z_i$ to $X$.  
\end{remark}
If $Z\subset X$ is non-empty, closed and invariant, restriction to $Z$ defines a linear map 
\begin{equation*}
	R_Z:\cE_{\rm orb}(X,T)\to \cE_{\rm orb}(Z,T),
	\qquad R_Z[\Phi]=[\Phi|_Z].
\end{equation*}

\begin{lemma}\label{x:lem:contractive-restriction}
	The map $R_Z$   is well defined and contractive.
	Its kernel is closed.
\end{lemma}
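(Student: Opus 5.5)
The plan is to work first at the level of representatives, with a restriction map on $\mathcal{AA}_{\rm orb}(X,T)$, and only afterwards pass to the quotients. For $\Phi=\{\phi_n\}\in\mathcal{AA}_{\rm orb}(X,T)$ we put $\Phi|_Z:=\{\phi_n|_Z\}_{n\geq1}$. The first step checks that $\Phi|_Z\in\mathcal{AA}_{\rm orb}(Z,T)$ and that
$$
\|\Phi|_Z\|_{\mathcal{AA}_{\rm orb}(Z)}\leq\|\Phi\|_{\mathcal{AA}_{\rm orb}(X)}.
$$
Each of the three summands of the norm is a supremum over points of $X$ (or pairs of points), and restricting to $Z$ only shrinks the index set. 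Concretely:
\begin{itemize}
\item For $\|\cdot\|_{\mathcal A}$, we have $\|\phi_n|_Z\|_{\infty,Z}\leq\|\phi_n\|_\infty$. The same holds with $\phi_n$ replaced by $\phi_n-S_ng$, where $g|_Z$ serves as the approximating function on $Z$. So $\Phi|_Z$ is asymptotically additive on $(Z,T)$ and its $\mathcal A$-norm does not increase.
\item For $D$, the expression $\phi_{n+m}-\phi_n-\phi_m\circ T^n$ restricted to $Z$ is the corresponding expression for $\Phi|_Z$, because $Z$ is invariant.
\item For $L_{\rm orb}$, the metric $d_n^{(1)}$ on $Z$ is the restriction of the one on $X$, so the supremum is over fewer pairs.
\end{itemize}
Hence $\Phi\mapsto\Phi|_Z$ is a linear contraction $\mathcal{AA}_{\rm orb}(X,T)\to\mathcal{AA}_{\rm orb}(Z,T)$.

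Next, well-definedness. Suppose $\Phi\in\mathcal{AA}_{\rm orb}(X,T)\cap\mathcal N$, so that $\limsup_n\frac1n\|\phi_n\|_\infty=0$. Then $\frac1n\|\phi_n|_Z\|_{\infty,Z}\leq\frac1n\|\phi_n\|_\infty\to0$, which gives $\Phi|_Z\in\mathcal{AA}_{\rm orb}(Z,T)\cap\mathcal N(Z)$. So restriction maps the null subspace into the null subspace, and $R_Z[\Phi]=[\Phi|_Z]$ is independent of the representative. Linearity is inherited.

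For contractivity, take $\Psi\in\mathcal{AA}_{\rm orb}(X,T)\cap\mathcal N$. Then
$$
\|R_Z[\Phi]\|_{\cE_{\rm orb}(Z)}\leq\|(\Phi+\Psi)|_Z\|_{\mathcal{AA}_{\rm orb}(Z)}\leq\|\Phi+\Psi\|_{\mathcal{AA}_{\rm orb}(X)},
$$
since $\Psi|_Z$ lies in the null subspace of $Z$. Taking the infimum over $\Psi$ gives $\|R_Z\eta\|\leq\|\eta\|$. Closedness of the kernel then follows at once, since the kernel of a bounded linear map is the preimage of $\{0\}$ under a continuous map. This uses that $\cE_{\rm orb}(Z,T)$ is a normed (indeed Banach) space, i.e.\ that the null subspace on $Z$ is closed, which is Corollary~\ref{cor:orb-quotient} applied to $(Z,T)$.

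The one point needing a little care is the $\|\cdot\|_{\mathcal A}$ component. That norm was never written out explicitly. I will use only the facts the paper relies on: it dominates $\sup_n\frac1n\|\psi_n\|_\infty$, and it is defined through sup norms over $X$ (the "strong sequence norm"). Both properties are monotone under restriction to an invariant subset. If the precise definition also involves approximating functions, then restricting those functions to $Z$ handles it in the same way. Everything else is routine.
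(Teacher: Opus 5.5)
Your proposal is correct and follows essentially the same route as the paper. You restrict representatives and check that $\|\cdot\|_{\mathcal A}$, $D$ and $L_{\rm orb}$ can only decrease on the invariant set $Z$, using $g|_Z$ as approximant via $S_n(g|_Z)=(S_ng)|_Z$; you then note that $\mathcal{AA}_{\rm orb}(X,T)\cap\mathcal N$ restricts into the null subspace on $Z$ and take the infimum over null perturbations to get contractivity, from which closedness of the kernel follows by continuity, exactly as the paper does (your reading $\|\Psi\|_{\mathcal A}=\sup_n\frac1n\|\psi_n\|_\infty$ is the formula the paper itself uses in this proof).
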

\begin{proof}
	Let $\Phi=\{\phi_n\}_{n\geq1}\in\mathcal{AA}_{\rm orb}(X,T)$.  We first show that $\Phi|_Z\in\mathcal{AA}_{\rm orb}(Z,T)$.
	Since $\Phi\in\mathcal A(X,T)$, for any $\varepsilon>0$, there exists
	$g_\varepsilon\in C(X,\mathbb{R})$ such that
	$
	\limsup_{n\to\infty}
	\frac1n\|\phi_n-S_ng_\varepsilon\|_{\infty,X}
	\leq\varepsilon.
	$
	Since $Z$ is invariant,
	$
	S_n(g_\varepsilon|_Z)=(S_ng_\varepsilon)|_Z.$
	Therefore, $$\limsup_{n\to\infty}
	\frac1n
	\|\phi_n|_Z-S_n(g_\varepsilon|_Z)\|_{\infty,Z}
	\leq
	\limsup_{n\to\infty}
	\frac1n
	\|\phi_n-S_ng_\varepsilon\|_{\infty,X}
	\leq\varepsilon.$$
	Hence $\Phi|_Z\in\mathcal A(Z,T)$.
	
	Moreover, $$
	\|\Phi|_Z\|_{\mathcal A}
	=\sup_{n\geq1}\frac1n\|\phi_n|_Z\|_{\infty,Z}
	\leq
	\sup_{n\geq1}\frac1n\|\phi_n\|_{\infty,X}
	=
	\|\Phi\|_{\mathcal A}.$$
	For every $n,m\geq1$, we   have
	$\|\phi_{n+m}|_Z-\phi_n|_Z		-(\phi_m|_Z)\circ T^n\|_{\infty,Z}\leq\|	\phi_{n+m}-\phi_n-\phi_m\circ T^n
	\|_{\infty,X}.$  
	Taking the supremum over $n,m\geq1$ gives
	$
	D(\Phi|_Z)\leq D(\Phi).
	$ 
	For $x,y\in Z$ and $n\geq1$,  
	$$
	\frac{|\phi_n(x)-\phi_n(y)|}{
	\sum_{j=0}^{n-1}d(T^jx,T^jy)}
	\leq L_{\rm orb}(\Phi).
	$$ 
	Taking the supremum over $n\geq1$ and  $x\neq y\in Z$, we have
	$
	L_{\rm orb}(\Phi|_Z)\leq L_{\rm orb}(\Phi).
	$
	It follows that
	$
	\|\Phi|_Z\|_{\mathcal{AA}_{\rm orb}}
	\leq
	\|\Phi\|_{\mathcal{AA}_{\rm orb}}.
	$
	
	We next show that $R_Z$ is well defined on $\cE_{X}$.  Suppose
	that $[\Phi]=[\Psi]\in\cE_{\rm orb}(X,T)$, where
	$\Psi=\{\psi_n\}_{n\geq1}$.  Then $\Phi-\Psi$ is sublinear uniformly on
	$X$, and hence $\limsup_{n\to\infty}
	\frac1n
	\|(\phi_n-\psi_n)|_Z\|_{\infty,Z}
	\leq
	\limsup_{n\to\infty}
	\frac1n
	\|\phi_n-\psi_n\|_{\infty,X}=0.$ 
	Therefore,
	$
	[\Phi|_Z]=[\Psi|_Z]\in\cE_{\rm orb}(Z,T).
	$  Hence it is well defined.  Its linearity follows
	directly from the definition.

	Let $\eta=[\Phi]\in\cE_{\rm orb}(X,T)$, and let $ \Gamma=\{\gamma_n\}_{n\geq1} \in\mathcal{AA}_{\rm orb}(X,T)\cap\mathcal N. $ Then, $\Gamma|_Z\in\mathcal{AA}_{\rm orb}(Z,T)$. Moreover, $ \limsup_{n\to\infty} \frac1n\|\gamma_n|_Z\|_{\infty,Z} \leq \limsup_{n\to\infty} \frac1n\|\gamma_n\|_{\infty,X} =0. $  Hence $ [(\Phi+\Gamma)|_Z]=[\Phi|_Z]=R_Z\eta. $ By the definition of the quotient norm, $$\|R_Z\eta\|_{\cE_{\rm orb}(Z,T)} \leq \|(\Phi+\Gamma)|_Z\|_{\mathcal{AA}_{\rm orb}(Z,T)}\leq \|\Phi+\Gamma\|_{\mathcal{AA}_{\rm orb}(X,T)}.$$ Taking the infimum over all $\Gamma\in\mathcal{AA}_{\rm orb}(X,T)\cap\mathcal N$, we have $ \|R_Z\eta\|_{\cE_{\rm orb}(Z,T)} \leq \|\eta\|_{\cE_{\rm orb}(X,T)}. $ Therefore, $R_Z$ is contractive  and therefore continuous.

	Finally, let $\eta_k\in\ker R_Z$ and suppose that
	$\eta_k\to\eta$ in $\cE_{\rm orb}(X,T)$.  By the continuity of $R_Z$,
	$R_Z\eta	=\lim_{k\to\infty}R_Z\eta_k
	=0.$ 
	Thus $\eta\in\ker R_Z$, and hence $\ker R_Z$ is closed.
\end{proof}
The  subsystem space is the Banach quotient $\cE_{\rm orb}^{X}(Z,T)
:=\cE_{\rm orb}(X,T)/\ker R_Z.$ 
We identify this quotient algebraically with $R_Z(\cE_{\rm orb}(X,T))$ and equip it with
the lifting norm
\begin{equation}\label{x:eq:lifting-norm}
	\|\zeta\|_{Z\leftarrow X}
	:=\inf\{\|\theta\|_{\cE_{\rm orb}(X,T)}:R_Z\theta=\zeta\}.
\end{equation}
Thus $\cE_{\rm orb}^{X}(Z,T)$ consists precisely of orbit Lipschitz almost
additive classes on $Z$ which admit a global lift. The lifting norm is used to control perturbations when they are lifted from
$Z$ to $X$. The following proposition shows that the extendable space is
complete and that every small class on $Z$ has a small global lift.

\begin{proposition}\label{x:prop:quotient-lifting}
	The space
	$\bigl(\cE_{\rm orb}^{X}(Z,T),\|\cdot\|_{Z\leftarrow X}\bigr)$ is Banach.
	The map 
	$
	R_Z^X:\cE_{\rm orb}(X,T)\to\cE_{\rm orb}^{X}(Z,T)
	$ 
	is a continuous open surjection, and
	\begin{equation}\label{x:eq:two-restriction-norms}
		\|\zeta\|_{\cE_{\rm orb}(Z,T)}
		\leq \|\zeta\|_{Z\leftarrow X}.
	\end{equation}
	In particular, whenever
	$\|\zeta\|_{Z\leftarrow X}<\varepsilon$, there is a lift $\theta$ satisfying
	$R_Z\theta=\zeta$ and $\|\theta\|_{\cE_{\rm orb}(X,T)}<\varepsilon$.
\end{proposition}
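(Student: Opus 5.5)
The plan is to identify $\cE_{\rm orb}^{X}(Z,T)$ with the abstract quotient $\cE_{\rm orb}(X,T)/\ker R_Z$ and to read off every claim from standard facts about quotients of Banach spaces by closed subspaces. By Lemma~\ref{x:lem:contractive-restriction}, $R_Z$ is linear and contractive, so $\ker R_Z$ is a closed subspace of the Banach space $\cE_{\rm orb}(X,T)$ (Corollary~\ref{cor:orb-quotient}). The induced map $\widetilde R_Z:\cE_{\rm orb}(X,T)/\ker R_Z\to R_Z(\cE_{\rm orb}(X,T))$, $[\theta]\mapsto R_Z\theta$, is a linear bijection. Under this identification the quotient norm
\[
\|[\theta]\|=\inf\{\|\theta+\kappa\|_{\cE_{\rm orb}(X,T)}:\kappa\in\ker R_Z\}
\]
coincides with the lifting norm \eqref{x:eq:lifting-norm}, because the lifts of $\zeta=R_Z\theta$ are exactly the elements $\theta+\kappa$ with $\kappa\in\ker R_Z$. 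Completeness then follows from the general fact that a quotient of a Banach space by a closed subspace is Banach. I will state this fact and not reprove it; if a proof is wanted, use absolutely summable series: choose lifts with $\|\theta_k\|\le\|\zeta_k\|+2^{-k}$ and sum them.

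Next, $R_Z^X$ is just the canonical quotient map composed with this isometric identification, so it is surjective by construction. It is also contractive, since $\|R_Z^X\theta\|_{Z\leftarrow X}\le\|\theta\|$ (take $\theta$ itself as a lift), and hence continuous. For openness I will show directly that $R_Z^X$ maps the open ball $B(0,\varepsilon)$ onto the open ball of radius $\varepsilon$ in the lifting norm. Suppose $\|\zeta\|_{Z\leftarrow X}<\varepsilon$. Because the norm is an infimum, there is a lift $\theta$ with $\|\theta\|<\varepsilon$. This proves the final assertion of the proposition. Linearity then gives $R_Z^X(B(\theta_0,\varepsilon))=B(R_Z^X\theta_0,\varepsilon)$, and openness follows. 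The open mapping theorem is not needed.

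Finally, for \eqref{x:eq:two-restriction-norms}, take any lift $\theta$ of $\zeta$. By the contractivity in Lemma~\ref{x:lem:contractive-restriction}, $\|\zeta\|_{\cE_{\rm orb}(Z,T)}=\|R_Z\theta\|_{\cE_{\rm orb}(Z,T)}\le\|\theta\|_{\cE_{\rm orb}(X,T)}$. Taking the infimum over all lifts gives the inequality.

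I do not expect a genuine obstacle. The one point that needs care is checking that the lifting norm really is a norm on the image, that is, that it is well defined and that $\|\zeta\|_{Z\leftarrow X}=0$ forces $\zeta=0$. The quickest route is through \eqref{x:eq:two-restriction-norms}: if the lifting norm vanishes, then so does the norm of $\zeta$ in $\cE_{\rm orb}(Z,T)$, and $\zeta=0$. Equivalently, this is where closedness of $\ker R_Z$ is used, and it is exactly what makes the identification with the Banach quotient legitimate.
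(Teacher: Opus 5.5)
Your proposal is correct and follows the same route as the paper. You identify $\cE_{\rm orb}^{X}(Z,T)$ with the Banach quotient $\cE_{\rm orb}(X,T)/\ker R_Z$, whose kernel is closed by Lemma~\ref{x:lem:contractive-restriction}; read completeness, openness and the small-lift property off the quotient norm; and obtain \eqref{x:eq:two-restriction-norms} from contractivity of $R_Z$ by taking the infimum over lifts, merely spelling out details the paper leaves implicit (the ball-onto-ball openness argument and positive definiteness of the lifting norm).
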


\begin{proof}
	The quotient of the Banach space $\cE_{\rm orb}(X,T)$ by the closed subspace
	$\ker R_Z$ is Banach.  The quotient map is open, and its quotient norm is
	exactly \eqref{x:eq:lifting-norm}.  Inequality
	\eqref{x:eq:two-restriction-norms} follows from the contractivity in
	Lemma~\ref{x:lem:contractive-restriction}.  
\end{proof}

For $\zeta\in\cE_{\rm orb}^{X}(Z,T)$, write
 \begin{align*}
 	\beta_Z(\zeta)
 	:&=\max_{\mu\in\M(Z,T)}\zeta_*(\mu),\\
 	\mathcal M_{\max,Z}(\zeta)
 	:&=\{\mu\in\M(Z,T):\zeta_*(\mu)=\beta_Z(\zeta)\}
 \end{align*} 
If $\mathscr N\subset\M(X,T)$, put
\begin{equation*}
	\mathcal W_Z^X(\mathscr N)
	:=\{\zeta\in\cE_{\rm orb}^{X}(Z,T):
	\mathcal M_{\max,Z}(\zeta)\cap\mathscr N\cap\M(Z,T)\neq\varnothing\}.
\end{equation*}
With the preceding notation, we obtain the following global subsystem
reduction theorem.
\begin{theorem}
	\label{x:thm:subsystem-reduction-extendable}
	Let $U\subset\cE_{\rm orb}(X,T)$ be non-empty and open, and suppose that $Z$ is $\eta$-maximizable for every $\eta\in U$.  
	If $\mathcal W_Z^X(\mathscr N)$ is dense in
	$\cE_{\rm orb}^{X}(Z,T)$, then
	$\cE_{\mathscr N}\cap U$is dense in $U.$
\end{theorem}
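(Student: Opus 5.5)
Fix $\eta\in U$ and $\varepsilon>0$ small enough that the ball $B(\eta,\varepsilon)\subset U$. The goal is to find $\eta'\in\cE_{\mathscr N}$ with $\|\eta'-\eta\|_{\cE_{\rm orb}(X,T)}<\varepsilon$. The plan is to perturb on $Z$ using the density hypothesis, lift the perturbation to $X$, and then use the fact that $Z$ carries a global maximizing measure throughout $U$.

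\emph{Step 1: perturb on $Z$.} Set $\zeta:=R_Z^X\eta\in\cE_{\rm orb}^{X}(Z,T)$. Since $\mathcal W_Z^X(\mathscr N)$ is dense in this space, choose $\zeta'\in\mathcal W_Z^X(\mathscr N)$ with $\|\zeta'-\zeta\|_{Z\leftarrow X}<\varepsilon$.

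\emph{Step 2: lift.} By Proposition~\ref{x:prop:quotient-lifting}, there is $\theta\in\cE_{\rm orb}(X,T)$ with $R_Z\theta=\zeta'-\zeta$ and $\|\theta\|_{\cE_{\rm orb}(X,T)}<\varepsilon$. Put $\eta':=\eta+\theta$. Then $\eta'\in B(\eta,\varepsilon)\subset U$ and $R_Z\eta'=\zeta'$.

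\emph{Step 3: compare maximizing sets.} First observe that for $\mu\in\M(Z,T)$ one has $\eta'_*(\mu)=(R_Z\eta')_*(\mu)=\zeta'_*(\mu)$. Indeed, $\int\phi_n\,d\mu$ depends only on $\phi_n|_Z$, since $\supp\mu\subset Z$. Hence $\beta_Z(\zeta')=\beta(Z,T,\eta')$ and $\mathcal M_{\max,Z}(\zeta')=\Max(Z,T,\eta')$.

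Because $\eta'\in U$, the set $Z$ is $\eta'$-maximizable. By Lemma~\ref{lem:restriction}(b), this gives $\beta(Z,T,\eta')=\beta(\eta')$. Lemma~\ref{lem:restriction}(c) then gives $\Max(Z,T,\eta')\subset\Max(\eta')$. These two lemmas are stated for sequences, but they apply to any representative of the class.

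\emph{Step 4: conclude.} Pick $\mu\in\mathcal M_{\max,Z}(\zeta')\cap\mathscr N\cap\M(Z,T)$, which exists since $\zeta'\in\mathcal W_Z^X(\mathscr N)$. By Step 3, $\mu\in\Max(\eta')\cap\mathscr N$, so $\eta'\in\cE_{\mathscr N}\cap U$ and is within $\varepsilon$ of $\eta$. This proves the claimed density.

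The only point needing care is the identification in Step 3: that the functional $\eta'_*$ restricted to $\M(Z,T)$ coincides with $(R_Z\eta')_*$, and that this is independent of representatives. The second part holds because sublinear sequences contribute zero to the limit in Lemma~\ref{lem:functional}. The essential use of the hypothesis on $U$ is that the lifted perturbation does not let the maximum over $X$ escape from $Z$. This is exactly why openness of $U$ and the lifting bound with the same $\varepsilon$ are both needed.
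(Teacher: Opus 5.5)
Your proof is correct and follows the paper's argument step for step. You approximate $R_Z\eta$ within $\varepsilon$ by an element of $\mathcal W_Z^X(\mathscr N)$, lift the difference via Proposition~\ref{x:prop:quotient-lifting} with norm below $\varepsilon$, and then use $Z$-maximizability on $U$ together with Lemma~\ref{lem:restriction} to turn a $Z$-maximizing measure in $\mathscr N$ into a global one. Your explicit check that $\eta'_*$ and $(R_Z\eta')_*$ agree on $\M(Z,T)$, independently of the chosen representative, is a detail the paper leaves implicit.
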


\begin{proof}
	Fix $\eta\in U$ and $\varepsilon>0$ so small that
	$B_{\cE_{\rm orb}(X,T)}(\eta,\varepsilon)\subset U$.  By density, choose
	$\zeta\in\mathcal W_Z^X(\mathscr N)$ with
	$
	\|\zeta-R_Z\eta\|_{Z\leftarrow X}<\varepsilon.
	$
	Proposition~\ref{x:prop:quotient-lifting} provides $\theta\in\cE_{\rm orb}(X,T)$ such that
	$
	R_Z\theta=\zeta-R_Z\eta$,
 	and $\|\theta\|_{\cE_{\rm orb}(X,T)}<\varepsilon.
	$ 
	Hence $\eta+\theta\in U$ and $R_Z(\eta+\theta)=\zeta$.  Choose
	$
	\mu\in\mathcal M_{\max,Z}(\zeta)\cap\mathscr N\cap\M(Z,T).
	$
	$Z$ is maximizable for $\eta+\theta$.  Therefore, we have 
	$
	\beta_Z(R_Z(\eta+\theta))=\beta(\eta+\theta),$ and $\mathcal M_{\max,Z}(R_Z(\eta+\theta))\subset\Max(\eta+\theta).$ 
	It follows that $\mu\in\Max(\eta+\theta)\cap\mathscr N$, so
	$\eta+\theta\in\cE_{\mathscr N}\cap U$.  
	Therefore, $\cE_{\mathscr N}\cap U$ is dense in $U$.	
\end{proof}
\begin{remark}[Difference from the classical subsystem reduction]
	In the classical additive setting, a Lipschitz perturbation on $Z$
	can be extended to $X$ by the McShane extension theorem
	\cite{McShane1934}. In the present setting, a perturbation is represented
	by an orbit Lipschitz almost additive sequence, and extending its terms
	separately may not preserve almost additivity.
	
	For this reason, we work with the extendable space
	\[
	\cE_{\rm orb}^{X}(Z,T)
	:=
	R_Z\bigl(\cE_{\rm orb}(X,T)\bigr)
	\subset
	\cE_{\rm orb}(Z,T).
	\]
	Its elements are precisely the classes on $Z$ that admit a lift to $X$,
	and the lifting norm provides a lift with controlled norm. Therefore,
	the subsystem reduction is first established on
	$\cE_{\rm orb}^{X}(Z,T)$.
	
	The equality
	\[
	\cE_{\rm orb}^{X}(Z,T)=\cE_{\rm orb}(Z,T)
	\]
	holds if and only if $R_Z$ is surjective. In this case, the reduction
	has the same form as in the classical setting. In particular, an
	equivariant Lipschitz retraction from $X$ onto $Z$ guarantees the
	surjectivity of $R_Z$.
\end{remark}
The preceding subsystem reduction is stated on the extendable space $\cE_{\rm orb}^{X}(Z,T)$. We now give a condition under which this space equals the full subsystem space $\cE_{\rm orb}(Z,T)$ and the two norms equivalent.
\begin{theorem}
	\label{x:thm:surjective-restriction}
	If
	$
	R_Z:\cE_{\rm orb}(X,T)\to\cE_{\rm orb}(Z,T)
	$ 
	is surjective, then $ \cE_{\rm orb}^{X}(Z,T)=\cE_{\rm orb}(Z,T),$ and their norms are equivalent.
\end{theorem}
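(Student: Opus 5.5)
The proof splits into an algebraic identification and a norm comparison. For the algebra, recall that $\cE_{\rm orb}^{X}(Z,T)$ has been identified with the image $R_Z(\cE_{\rm orb}(X,T))\subset\cE_{\rm orb}(Z,T)$. If $R_Z$ is surjective, this image is all of $\cE_{\rm orb}(Z,T)$, so the two spaces coincide as sets and as vector spaces. It remains to compare the lifting norm $\|\cdot\|_{Z\leftarrow X}$ with the intrinsic quotient norm $\|\cdot\|_{\cE_{\rm orb}(Z,T)}$.

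One inequality is already available. Inequality \eqref{x:eq:two-restriction-norms} in Proposition~\ref{x:prop:quotient-lifting} gives $\|\zeta\|_{\cE_{\rm orb}(Z,T)}\leq\|\zeta\|_{Z\leftarrow X}$ for every $\zeta$. This follows from the contractivity of $R_Z$ (Lemma~\ref{x:lem:contractive-restriction}): for any lift $\theta$ of $\zeta$ we have $\|\zeta\|=\|R_Z\theta\|\leq\|\theta\|$, and taking the infimum over lifts gives the bound.

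For the reverse inequality I would use the open mapping theorem. Both $(\cE_{\rm orb}(Z,T),\|\cdot\|_{\cE_{\rm orb}(Z,T)})$ and $(\cE_{\rm orb}^{X}(Z,T),\|\cdot\|_{Z\leftarrow X})$ are Banach spaces: the first by Corollary~\ref{cor:orb-quotient} applied to the system $(Z,T)$, the second by Proposition~\ref{x:prop:quotient-lifting}. Under surjectivity they have the same underlying vector space. The identity map from the lifting-norm space to the intrinsic-norm space is then a bounded linear bijection between Banach spaces, with bound $1$ by the previous step. By the bounded inverse theorem its inverse is bounded, so there is $C>0$ with $\|\zeta\|_{Z\leftarrow X}\leq C\|\zeta\|_{\cE_{\rm orb}(Z,T)}$. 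This gives equivalence of the norms.

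An alternative route to the same constant applies the open mapping theorem directly to the surjection $R_Z:\cE_{\rm orb}(X,T)\to\cE_{\rm orb}(Z,T)$. Its continuity is Lemma~\ref{x:lem:contractive-restriction}, and the theorem gives $\delta>0$ such that every $\zeta$ with $\|\zeta\|_{\cE_{\rm orb}(Z,T)}<\delta$ has a lift of norm less than $1$. Scaling then yields $\|\zeta\|_{Z\leftarrow X}\leq\delta^{-1}\|\zeta\|_{\cE_{\rm orb}(Z,T)}$.

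The only delicate point is completeness of both spaces, since the open mapping theorem needs it; this is exactly what the earlier results supply. The lifting space is complete because it is the quotient of the Banach space $\cE_{\rm orb}(X,T)$ by the closed subspace $\ker R_Z$, which is closed by Lemma~\ref{x:lem:contractive-restriction}. I expect no further obstacle.
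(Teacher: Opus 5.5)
Your proposal is correct and is essentially the paper's argument. The paper applies the open mapping theorem directly to the bounded surjection $R_Z:\cE_{\rm orb}(X,T)\to\cE_{\rm orb}(Z,T)$ to obtain lifts with $\|\theta\|\leq C_Z\|\zeta\|$ (your ``alternative route''), then combines this with \eqref{x:eq:two-restriction-norms}; your main route, the bounded inverse theorem for the identity between the two complete norms, is the same theorem applied to the induced map on $\cE_{\rm orb}(X,T)/\ker R_Z$ and differs only cosmetically.
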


\begin{proof}
	Since $R_Z$ is surjective, $
	\cE_{\rm orb}^{X}(Z,T)=R_Z(\cE_{\rm orb}(X,T))=\cE_{\rm orb}(Z,T).$ 
	The restriction map is a bounded surjection between Banach spaces. 
	By the open mapping theorem, there exists $C_Z<\infty$ such that 
	every $\zeta\in\cE_{\rm orb}(Z,T)$ has a lift $\theta\in\cE_{\rm orb}(X,T)$ satisfying	$R_Z\theta=\zeta
	$ and $\|\theta\|_{\cE_{\rm orb}(X,T)}\leq  C_Z\|\zeta\|_{\cE_{\rm orb}(Z,T)}.$ 
	Therefore, $\|\zeta\|_{Z\leftarrow X}\leq
	\|\theta\|_{\cE_{\rm orb}(X,T)}\leq 	C_Z\|\zeta\|_{\cE_{\rm orb}(Z,T)}.$ 
	Together with \eqref{x:eq:two-restriction-norms}, this gives
	$\|\zeta\|_{\cE_{\rm orb}(Z,T)}\leq\|\zeta\|_{Z\leftarrow X}\leq
	C_Z\|\zeta\|_{\cE_{\rm orb}(Z,T)}.$ 
	Hence the two norms are equivalent.
\end{proof}
We now give a condition under which the restriction map is surjective and has a bounded right inverse.
\begin{definition}
	A map $\pi_Z:X\to Z$ is an \emph{equivariant Lipschitz retraction} if
	$$
	\pi_Z|_Z=\operatorname{id}_Z,
	\qquad \pi_Z\circ T=T|_Z\circ\pi_Z,
	\qquad \Lip(\pi_Z)<\infty.
	$$
\end{definition}

\begin{theorem}\label{x:thm:retraction-lifting}
	Suppose $Z$ admits an equivariant Lipschitz retraction $\pi_Z$.  Then
	$R_Z$ is surjective and has the bounded linear right inverse
	\begin{equation*}
		J_Z:\cE_{\rm orb}(Z,T)\longrightarrow\cE_{\rm orb}(X,T),
		\qquad
		J_Z[(\phi_n)]=[(\phi_n\circ\pi_Z)].
	\end{equation*}
	Moreover,
	\begin{equation*}
		\|J_Z\zeta\|_{\cE_{\rm orb}(X,T)}
		\leq\max\{1,\Lip(\pi_Z)\}
		\|\zeta\|_{\cE_{\rm orb}(Z,T)}.
	\end{equation*}
\end{theorem}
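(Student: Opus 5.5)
The approach is to define $J_Z$ first on representatives $\Phi=\{\phi_n\}\in\mathcal{AA}_{\rm orb}(Z,T)$ by $\widetilde J_Z\Phi:=\{\phi_n\circ\pi_Z\}_{n\geq1}$. We then check that $\widetilde J_Z$ maps $\mathcal{AA}_{\rm orb}(Z,T)$ into $\mathcal{AA}_{\rm orb}(X,T)$ with norm at most $\max\{1,\Lip(\pi_Z)\}$, and that it maps $\mathcal N$ into $\mathcal N$. This gives a well-defined bounded operator on the quotients. Finally, since $\pi_Z|_Z=\mathrm{id}_Z$, we get $(\phi_n\circ\pi_Z)|_Z=\phi_n$, hence $R_ZJ_Z=\mathrm{id}$, and surjectivity of $R_Z$ follows.

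The key tool is the equivariance identity $\pi_Z\circ T^j=T^j\circ\pi_Z$ for all $j\geq0$. It gives three estimates.
\begin{itemize}
\item \emph{Almost additivity.} We have $(\phi_m\circ\pi_Z)\circ T^n=\phi_m\circ T^n\circ\pi_Z$. Hence
$\phi_{n+m}\circ\pi_Z-\phi_n\circ\pi_Z-(\phi_m\circ\pi_Z)\circ T^n=(\phi_{n+m}-\phi_n-\phi_m\circ T^n)\circ\pi_Z$, so $D(\widetilde J_Z\Phi)\leq D(\Phi)$.
\item \emph{Sup norms.} Since $\pi_Z$ is onto $Z$, $\|\phi_n\circ\pi_Z\|_{\infty,X}=\|\phi_n\|_{\infty,Z}$. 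Thus $\|\widetilde J_Z\Phi\|_{\mathcal A}=\|\Phi\|_{\mathcal A}$, and membership in $\mathcal N$ is preserved.
\item \emph{Orbit Lipschitz seminorm.} Take $x\neq y$ in $X$. If $\pi_Zx=\pi_Zy$ the difference quotient vanishes. Otherwise
\[
|\phi_n(\pi_Zx)-\phi_n(\pi_Zy)|\leq L_{\rm orb}(\Phi)\sum_{j<n}d(T^j\pi_Zx,T^j\pi_Zy)=L_{\rm orb}(\Phi)\sum_{j<n}d(\pi_ZT^jx,\pi_ZT^jy)\leq L_{\rm orb}(\Phi)\Lip(\pi_Z)\,d_n^{(1)}(x,y).
\]
This gives $L_{\rm orb}(\widetilde J_Z\Phi)\leq\Lip(\pi_Z)L_{\rm orb}(\Phi)$.
\end{itemize}

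We also need $\widetilde J_Z\Phi\in\mathcal A(X,T)$. Given $g_\xi\in C(Z,\mathbb R)$ with $\limsup_n\frac1n\|\phi_n-S_ng_\xi\|_{\infty,Z}\leq\xi$, put $\tilde g_\xi:=g_\xi\circ\pi_Z$. Equivariance gives $S_n\tilde g_\xi=(S_ng_\xi)\circ\pi_Z$, and the same bound transfers. Combining the three estimates yields
\[
\|\widetilde J_Z\Phi\|_{\mathcal{AA}_{\rm orb}}\leq\max\{1,\Lip(\pi_Z)\}\|\Phi\|_{\mathcal{AA}_{\rm orb}}.
\]

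Next we pass to quotients. Since $\widetilde J_Z$ is linear and maps $\mathcal{AA}_{\rm orb}(Z,T)\cap\mathcal N$ into $\mathcal{AA}_{\rm orb}(X,T)\cap\mathcal N$, $J_Z$ is well defined. For any $\Gamma$ in the null subspace on $Z$,
\[
\|J_Z[\Phi]\|\leq\|\widetilde J_Z(\Phi+\Gamma)\|_{\mathcal{AA}_{\rm orb}}\leq\max\{1,\Lip(\pi_Z)\}\|\Phi+\Gamma\|_{\mathcal{AA}_{\rm orb}},
\]
and taking the infimum over $\Gamma$ gives the stated bound.

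There is no serious obstacle. The only points needing care are the degenerate case $\pi_Zx=\pi_Zy$ in the orbit Lipschitz estimate, and the use of equivariance at every iterate. The latter is what makes orbit Lipschitz, rather than merely Lipschitz, control survive composition.
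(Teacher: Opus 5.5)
Your proposal is correct and follows essentially the same route as the paper's proof. You lift representatives by composing with $\pi_Z$, use equivariance to control asymptotic additivity, $D$, the $\mathcal A$-norm and $L_{\rm orb}$ (which picks up the factor $\Lip(\pi_Z)$), pass to quotients by taking the infimum over null representatives, and conclude surjectivity from $R_ZJ_Z=\mathrm{id}$; your explicit treatment of the case $\pi_Zx=\pi_Zy$ is a small point the paper leaves implicit.
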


\begin{proof}
	Let $\Phi=\{\phi_n\}_{n\geq1}
	\in\mathcal{AA}_{\rm orb}(Z,T)$ 
	and define $\widetilde\Phi:=
	\{\widetilde\phi_n\}_{n\geq1},$ $	\widetilde\phi_n:=\phi_n\circ\pi_Z.$ 
	Since $\pi_Z|_Z=\operatorname{id}_Z$, the map $\pi_Z:X\to Z$ is
	surjective. We first show that
	$\widetilde\Phi\in\mathcal{AA}_{\rm orb}(X,T)$.
	
	For every $f\in C(Z)$, 
	$\pi_Z\circ T=T\circ\pi_Z$ gives $$S_n(f\circ\pi_Z)(x) =\sum_{j=0}^{n-1}f(T^j(\pi_Zx))=(S_nf)(\pi_Zx).$$ 	Let $\varepsilon>0$. Since $\Phi$ is asymptotically additive on $Z$,
	there exists $f_\varepsilon\in C(Z)$ such that
	$
	\limsup_{n\to\infty}
	\frac1n
	\|\phi_n-S_nf_\varepsilon\|_{\infty,Z}
	\leq\varepsilon.
	$ Then,  
	$
	\|\widetilde\phi_n-S_n(f_\varepsilon\circ\pi_Z)\|_{\infty,X} 
	=\|\phi_n-S_nf_\varepsilon\|_{\infty,Z}.
	$
	Therefore,
	$
	\limsup_{n\to\infty}
	\frac1n
	\|\widetilde\phi_n-S_n(f_\varepsilon\circ\pi_Z)\|_{\infty,X}
	\leq\varepsilon.
	$
	Thus $\widetilde\Phi$ is asymptotically additive on $X$.
	
	We next estimate its almost additivity bound. For every $m,n\geq1$
	and $x\in X$,  
	$$
	\widetilde\phi_{n+m}(x)
	-\widetilde\phi_n(x)
	-\widetilde\phi_m(T^nx)=
	\phi_{n+m}(\pi_Zx)
	-\phi_n(\pi_Zx)
	-\phi_m(T^n(\pi_Zx)).$$ 
	Taking the supremum over $x\in X$ and   $m,n\geq1$, 
	$
	D(\widetilde\Phi)\leq D(\Phi).$ 
	Moreover, since $\pi_Z(X)=Z$,
	$$
	\|\widetilde\phi_n\|_{\infty,X}
	=
	\|\phi_n\|_{\infty,Z}.
	$$
	Consequently,
	$
	\|\widetilde\Phi\|_{\mathcal A(X,T)}
	\leq
	\|\Phi\|_{\mathcal A(Z,T)}.
	$
	It remains to prove the orbit Lipschitz condition. Let $x,y\in X$. 
	\begin{align*}
		|\widetilde\phi_n(x)-\widetilde\phi_n(y)|
		\leq
		L_{\rm orb}(\Phi)
		\sum_{j=0}^{n-1}
		d(T^j(\pi_Zx),T^j(\pi_Zy))
		\leq
		\Lip(\pi_Z)L_{\rm orb}(\Phi)
		\sum_{j=0}^{n-1}
		d(T^jx,T^jy).
	\end{align*}
	Thus
	$
	L_{\rm orb}(\widetilde\Phi)
	\leq
	\Lip(\pi_Z)L_{\rm orb}(\Phi).
	$	It follows that $
	\|\widetilde\Phi\|_{\mathcal{AA}_{\rm orb}(X,T)}
	\leq
	\max\{1,\Lip(\pi_Z)\}
	\|\Phi\|_{\mathcal{AA}_{\rm orb}(Z,T)}.$
	
	We next show that the map is well defined on the quotient spaces.
	Denote by $\mathcal N_Z$ and $\mathcal N_X$ the uniformly sublinear
	spaces on $Z$ and $X$, respectively. If
	$
	\Gamma=\{\gamma_n\}_{n\geq1}
	\in
	\mathcal{AA}_{\rm orb}(Z,T)\cap\mathcal N_Z,
	$
	then $$\limsup_{n\to\infty}
	\frac1n\|\gamma_n\circ\pi_Z\|_{\infty,X}
	=\limsup_{n\to\infty}
	\frac1n\|\gamma_n\|_{\infty,Z}=0.$$
	Hence
	$
	\{\gamma_n\circ\pi_Z\}_{n\geq1}
	\in
	\mathcal{AA}_{\rm orb}(X,T)\cap\mathcal N_X.
	$
	Therefore, if $[\Phi]=[\Psi]$ in $\cE_{\rm orb}(Z,T)$, then
	$
	[(\phi_n\circ\pi_Z)]
	=
	[(\psi_n\circ\pi_Z)]
	$
	in $\cE_{\rm orb}(X,T)$. Thus $J_Z$ is well defined and linear.
	
	Let $\zeta=[\Phi]\in\cE_{\rm orb}(Z,T)$. For every
	$\Gamma\in\mathcal{AA}_{\rm orb}(Z,T)\cap\mathcal N_Z$,
	we have 
	\begin{align*}
		\|J_Z\zeta\|_{\cE_{\rm orb}(X,T)}
		\leq
		\|(\Phi+\Gamma)\circ\pi_Z\|
		_{\mathcal{AA}_{\rm orb}(X,T)}
		\leq
		\max\{1,\Lip(\pi_Z)\}
		\|\Phi+\Gamma\|_{\mathcal{AA}_{\rm orb}(Z,T)}.
	\end{align*}
	Taking the infimum over all such $\Gamma$, we have
	$
	\|J_Z\zeta\|_{\cE_{\rm orb}(X,T)}
	\leq
	\max\{1,\Lip(\pi_Z)\}
	\|\zeta\|_{\cE_{\rm orb}(Z,T)}.
	$
	
	Finally, for every $z\in Z$ and $n\geq1$,
	$$R_ZJ_Z[\Phi]=R_Z[(\phi_n\circ\pi_Z)]=
	[((\phi_n\circ\pi_Z)|_Z)]=[(\phi_n)].$$
	Hence
	$
	R_ZJ_Z=\operatorname{id}_{\cE_{\rm orb}(Z,T)}.
	$
	Thus $J_Z$ is a bounded linear right inverse of $R_Z$. In particular,
	$R_Z$ is surjective.
\end{proof}

We next consider almost additive potentials near a periodic orbit. We estimate
the bound between the averages of an invariant measure and the periodic
measure by the average distance from the orbit. This estimate will be used to
prove global locking.

Let $\gPer(X,T)$ denote the set of periodic invariant measures.  We say 
$\eta\in\cE_{\rm orb}$ has \emph{periodic optimization} if
$
\Max(\eta)=\{\mu_Q\}
$ 
for some periodic orbit $Q$.  It has \emph{weak periodic optimization} if
$\Max(\eta)\cap\gPer(X,T)\neq\varnothing$. 
Let $ Q=\{q_0,q_1,\ldots,q_{p-1}\}$, $Tq_i=q_{i+1\pmod p},$
be a periodic orbit, and let
$
\mu_Q:=\frac1p\sum_{i=0}^{p-1}\delta_{q_i}.
$
Put
$
d_Q(x):=\dist(x,Q).
$
The additive sequence $S_\bullet d_Q$ belongs to
$\mathcal{AA}_{\rm orb}(X,T)$, with bound zero and orbit Lipschitz constant at
most one.

\begin{lemma}\label{g:lem:scalar-deviation}
	Suppose $\{b_n\}_{n\geq1}\subset\R$ and $D\geq0$ satisfy for $n,m\geq1$, 
	$ |b_{n+m}-b_n-b_m|\leq D. $
	If $a=\lim_{n\to\infty}b_n/n$, then for $n\geq1$,  
	$|b_n-na|\leq D$. 
\end{lemma}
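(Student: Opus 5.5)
The plan is a Fekete-type argument applied separately to the two shifted sequences $b_n+D$ and $b_n-D$, giving one-sided bounds that combine to $|b_n-na|\leq D$. We assume the limit $a=\lim_{n\to\infty}b_n/n$ exists and is finite, as in the statement. By hypothesis, for all $n,m\geq1$,
$$
b_{n+m}\leq b_n+b_m+D
\qquad\text{and}\qquad
b_{n+m}\geq b_n+b_m-D .
$$

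\emph{Upper bound on $a$.} Set $c_n:=b_n+D$. The first inequality gives
$$
c_{n+m}=b_{n+m}+D\leq b_n+b_m+2D=c_n+c_m,
$$
so $\{c_n\}$ is subadditive. Instead of quoting Fekete's lemma we argue directly. Fix $n$ and iterate along multiples of $n$ to get $c_{kn}\leq k\,c_n$ for every $k\geq1$. Divide by $kn$ and let $k\to\infty$. Since $c_{kn}/(kn)=b_{kn}/(kn)+D/(kn)\to a$, this yields $a\leq c_n/n$, that is,
$$
na\leq b_n+D .
$$

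\emph{Lower bound on $a$.} Symmetrically, $d_n:=b_n-D$ satisfies
$$
d_{n+m}\geq b_n+b_m-2D=d_n+d_m,
$$
so $\{d_n\}$ is superadditive. Iterating gives $d_{kn}\geq k\,d_n$. Dividing by $kn$ and letting $k\to\infty$ gives $a\geq d_n/n$, that is,
$$
b_n-D\leq na .
$$

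\emph{Conclusion.} Combining the two one-sided bounds gives $|b_n-na|\leq D$ for every $n\geq1$, as claimed. The only point requiring care is that the bound comes out as $D$ and not $2D$. This works because the shift by $D$ is chosen exactly so that $c_n$ is subadditive with no error term, and likewise for $d_n$. Passing through the multiples $kn$ avoids any remainder terms, and the existence of the limit $a$ lets us identify $\lim_k c_{kn}/(kn)$ with $a$. In the paper's intended application $b_n=\int\phi_n\,d\mu_Q$, and there the limit exists by Lemma~\ref{lem:functional}.
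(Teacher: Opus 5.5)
Your proof is correct and is essentially the paper's argument. The bound $c_{kn}\le k\,c_n$ for $c_n=b_n+D$, together with its superadditive counterpart for $d_n=b_n-D$, is just the paper's estimate $|b_{kn}-kb_n|\le (k-1)D$ split into its two one-sided halves; both arguments then divide by $k$ and use $b_{kn}/k\to na$.
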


\begin{proof}
	For fixed $n$, applying the above inequality repeatedly,  
	$
	|b_{kn}-kb_n|\leq(k-1)D.
	$
	Divide by $k$ and let $k\to\infty$.  Since
	$b_{kn}/k=n\,b_{kn}/(kn)\to na$.
\end{proof}
The next lemma gives a uniform bound for  $\Phi$ on a
periodic orbit.
\begin{lemma}\label{g:lem:phase}
	Let $(X,T)$ be a TDS, and  
	$
	Q=\{q_0,q_1,\ldots,q_{p-1}\}
	$
	be a periodic orbit of period $p$. Let $\Phi=\{\phi_n\}\in\mathcal{AA}_{\rm orb}(X,T)$ and put
	$
	A:=\|\Phi\|_{\cA},$ $D:=D(\Phi),$ $a_Q:=\Phi_*(\mu_Q).$
	Then, for all $n\geq1$ and $0\leq i<p$,
	\begin{equation*}
		|\phi_n(q_i)-na_Q|
		\leq K_Q(\Phi):=(2p+1)(A+D).
	\end{equation*}
\end{lemma}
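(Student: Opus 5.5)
Fix $i$ and set $b_n:=\phi_n(q_i)$. The plan is to apply Lemma~\ref{g:lem:scalar-deviation}, but only after handling the fact that $\{b_n\}$ is not exactly almost additive. Almost additivity gives
$$
|b_{n+m}-b_n-\phi_m(q_{i+n \bmod p})|\leq D,
$$
and the phase shifts from $q_i$ to $q_{i+n}$. So we first work along multiples of the period, where the phase returns to $q_i$.

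\textbf{Multiples of $p$.} Put $c_k:=\phi_{kp}(q_i)$. Since $T^{kp}q_i=q_i$, we get $|c_{k+l}-c_k-c_l|\leq D$ for all $k,l\geq1$. Next we identify the growth rate. The measure $\mu_Q$ is invariant, so Lemma~\ref{lem:functional} gives $a_Q=\lim_n\frac1n\int\phi_n\,d\mu_Q$. We want $\lim_k c_k/(kp)=a_Q$ for each individual $i$, not just on average over the orbit. This should follow by comparing $\phi_{kp}(q_i)$ with $\phi_{kp}(q_0)$: split $\phi_{kp}(q_0)=\phi_i(q_0)+\phi_{kp-i}(q_i)+O(D)$ and $\phi_{kp}(q_i)=\phi_{kp-i}(q_i)+\phi_i(q_{kp-i+i})+O(D)$. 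Then use $\|\phi_i\|_\infty\leq iA\leq pA$, so any two phases differ by a bounded amount and the limit is the same for all $i$. Averaging over $i$ then gives $a_Q$. Lemma~\ref{g:lem:scalar-deviation} applied to $\{c_k\}$ with rate $pa_Q$ yields
$$
|\phi_{kp}(q_i)-kpa_Q|\leq D .
$$

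\textbf{General $n$.} Write $n=kp+r$ with $0\leq r<p$. If $k=0$, then $|\phi_r(q_i)|\leq rA$ and $|ra_Q|\leq rA$, since $|a_Q|\leq\|\Phi\|_\cA$ by Corollary~\ref{cor:orb-quotient}. If $k\geq1$ and $r>0$, almost additivity gives $|\phi_n(q_i)-\phi_{kp}(q_i)-\phi_r(q_i)|\leq D$. Combining the bounds,
$$
|\phi_n(q_i)-na_Q|\leq D+D+|\phi_r(q_i)|+r|a_Q|\leq 2D+2rA\leq 2(p-1)A+2D,
$$
which lies within $(2p+1)(A+D)$.

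\textbf{Main obstacle.} The delicate point is justifying that the per-phase limit $\lim_k\phi_{kp}(q_i)/(kp)$ equals $a_Q$. The plan is to establish phase independence via the bounded-shift comparison above. An alternative is to observe directly that $\frac1p\sum_i\phi_{kp}(q_i)=\int\phi_{kp}\,d\mu_Q$ and that all phases differ by $O(pA+D)$. Either route yields the stated constant with room to spare.
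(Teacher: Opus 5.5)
Your proof is correct, but it takes a different route from the paper's.

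\textbf{The paper's route.} It averages over the orbit first. The sequence $b_n=\int\phi_n\,d\mu_Q$ is exactly almost additive for every $n$ by invariance of $\mu_Q$, so Lemma~\ref{g:lem:scalar-deviation} gives $|b_n-na_Q|\leq D$ at once. It then passes from the average to an individual point using the $(1,n)/(n,1)$ swap, $|\phi_n(q_{i+1})-\phi_n(q_i)|\leq 2\|\phi_1\|_\infty+2D$, chained around the orbit. This costs about $2p(A+D)$ and produces the constant $(2p+1)(A+D)$.

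\textbf{Your route.} You fix the phase and restrict to times $kp$. There $T^{kp}q_i=q_i$ makes $\{\phi_{kp}(q_i)\}_k$ exactly almost additive without any averaging, so the scalar lemma applies directly. Averaging over the orbit enters only qualitatively, to identify the limit as $pa_Q$. The remainder block of length $r<p$ is absorbed by one more almost-additivity step together with $|\phi_r|\leq rA$.

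\textbf{Comparison.} Your argument pays for an extra limit-identification step, which the paper avoids entirely. In return you get the sharper bound $2(p-1)A+2D$, whose $D$-coefficient does not grow with $p$.

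A small simplification: in your phase comparison the two $\phi_i(q_0)$ terms cancel exactly, so $|\phi_{kp}(q_0)-\phi_{kp}(q_i)|\leq 2D$. The cruder bound via $\|\phi_i\|_\infty\leq pA$ is not needed, though it does no harm.
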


\begin{proof}
	Set
	$
	b_n:=\int\phi_n\,d\mu_Q
	=\frac1p\sum_{i=0}^{p-1}\phi_n(q_i).
	$
	Since $\mu_Q$ is invariant, almost additivity gives
	$
	|b_{n+m}-b_n-b_m|\leq D.
	$
	Lemma~\ref{g:lem:scalar-deviation} therefore yields
	\begin{equation}\label{g:eq:b-linear}
		|b_n-na_Q|\leq D.
	\end{equation}
	
	Applying almost additivity with the pairs $(1,n)$ and $(n,1)$, we have 
	$|\phi_{n+1}(q_i)-\phi_1(q_i)-\phi_n(q_{i+1})|\leq D$, $|\phi_{n+1}(q_i)-\phi_n(q_i)-\phi_1(T^nq_i)|\leq D.$
	By the triangle inequality,
	$
	|\phi_n(q_{i+1})-\phi_n(q_i)|
	\leq2\|\phi_1\|_\infty+2D
	\leq2(A+D).
	$ Thus for $i,j\in\mathbb{N}$, $|\phi_n(q_{i})-\phi_n(q_j)|
	\leq2p(A+D)$. 
	Since $Q$ contains $p$ points, for every $0\leq i,j<p$,
	$
	|\phi_n(q_i)-b_n|\leq2p(A+D).
	$
	Together with \eqref{g:eq:b-linear}, yields the proof.
\end{proof}

Choose $\rho_Q>0$ such that the balls
$B(q_i,\rho_Q)$ are pairwise disjoint and
\begin{equation}\label{g:eq:phase-coherence}
	x\in B(q_i,\rho_Q),\quad Tx\in B(Q,\rho_Q)
	\quad\Longrightarrow\quad
	Tx\in B(q_{i+1},\rho_Q).
\end{equation}
Suppose first that $p\geq2$ and set
$
s:=\min_{i\neq j}d(q_i,q_j)>0.
$
By continuity of $T$, we may choose $0<\rho_Q<s/3$ such that
$
T(B(q_i,\rho_Q))\subset B(q_{i+1},s/3)
$
for every $i$. If $Tx\in B(q_j,\rho_Q)$ for some $j\neq i+1$, then
$$
s\leq d(q_j,q_{i+1})
<\rho_Q+s/3<2s/3.
$$
 Hence \eqref{g:eq:phase-coherence} holds. If $p=1$, the result is clear.

For $x\in X$ and $n\geq1$, call $j\in\{0,\ldots,n-1\}$ \emph{bad} if $ \dist(T^jx,Q)\geq\rho_Q, $ and \emph{good} otherwise. Write $b_n(x)$ for the number of bad times. Then 
\begin{equation*}
	 b_n(x) \leq\frac1{\rho_Q} \sum_{j=0}^{n-1}\dist(T^jx,Q).
\end{equation*} 
Divide $\{0,\ldots,n-1\}$ into maximal consecutive good and bad blocks. There are at most $b_n(x)+1$ good blocks, and the total number $r_n(x)$ of blocks satisfies \begin{equation}\label{g:eq:block-count} r_n(x)\leq2b_n(x)+1. 
\end{equation}
The following theorem is proved directly for an almost additive sequence
$\Phi=\{\phi_n\}_{n\geq1}$, without reducing it to an additive sequence
$S_\bullet f$. It combines the estimates above to give a uniform bound for
the difference between the asymptotic averages of $\nu$ and $\mu_Q$. This
bound will be used to prove global locking.
\begin{theorem}\label{thm:intro-global-locking}

	Let $(X,T)$ be a TDS, for every periodic orbit $Q$ there exists $C_Q>0$ such that, for every
	$\Phi\in\mathcal{AA}_{\rm orb}(X,T)$ and every
	$\nu\in\M(X,T)$,
	\begin{equation}\label{g:eq:representative-domination}
		|\Phi_*(\nu)-\Phi_*(\mu_Q)|
		\leq C_Q\|\Phi\|_{\mathcal{AA}_{\rm orb}}
		\int\dist(x,Q)\,d\nu(x).
	\end{equation}
	Consequently, for every $\eta\in\cE_{\rm orb}$,
	\begin{equation*}
		|\eta_*(\nu)-\eta_*(\mu_Q)|
		\leq C_Q\|\eta\|_{\cE_{\rm orb}}
		\int\dist(x,Q)\,d\nu(x).
	\end{equation*}
\end{theorem}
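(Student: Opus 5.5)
We would prove the pointwise estimate along orbit segments first and then integrate. Normalize by subtracting $na_Q$: set $\widetilde\phi_n:=\phi_n-na_Q$, $a_Q=\Phi_*(\mu_Q)$. This does not change $D$ or $L_{\rm orb}$, and Lemma~\ref{g:lem:phase} gives $|\widetilde\phi_n(q_i)|\leq K_Q(\Phi)=(2p+1)(A+D)$ uniformly in $n$ and $i$. The target is a bound of the form
\begin{equation*}
|\widetilde\phi_n(x)|\leq C'_Q\|\Phi\|_{\mathcal{AA}_{\rm orb}}\Bigl(1+\sum_{j=0}^{n-1}\dist(T^jx,Q)\Bigr)
\end{equation*}
for all $x\in X$ and $n\geq1$. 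Integrating against $\nu$, dividing by $n$, and letting $n\to\infty$ then gives \eqref{g:eq:representative-domination}, since the constant $1$ disappears after division by $n$ and $\int\widetilde\phi_n\,d\nu/n\to\Phi_*(\nu)-a_Q$. The statement for classes $\eta$ follows by taking the infimum over representatives, because $\Phi_*$ depends only on the class.

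For the pointwise bound, split $\{0,\ldots,n-1\}$ into the maximal good and bad blocks introduced before the theorem. On a good block $[j,j+\ell)$, the phase coherence property \eqref{g:eq:phase-coherence} shows there is an $i$ with $T^{j+t}x\in B(q_{i+t},\rho_Q)$ for all $0\leq t<\ell$. The orbit Lipschitz condition then gives
\begin{equation*}
|\phi_\ell(T^jx)-\phi_\ell(q_i)|\leq L_{\rm orb}(\Phi)\sum_{t<\ell}d(T^{j+t}x,q_{i+t}),
\end{equation*}
where $d(T^{j+t}x,q_{i+t})=\dist(T^{j+t}x,Q)$ by the disjointness of the balls. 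Combined with Lemma~\ref{g:lem:phase}, this yields $|\widetilde\phi_\ell(T^jx)|\leq K_Q+L_{\rm orb}\sum_{t<\ell}\dist(T^{j+t}x,Q)$.

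On a bad block of length $\ell$, we use the crude bounds $|\phi_\ell|\leq\ell A$ and $\ell|a_Q|\leq\ell A$, so $|\widetilde\phi_\ell|\leq 2\ell A$. Since every time in a bad block has $\dist\geq\rho_Q$, this is at most $(2A/\rho_Q)\sum\dist$ over the block. Finally, concatenating the $r_n(x)$ blocks with almost additivity costs at most $r_n(x)D$. By \eqref{g:eq:block-count} and the bound $b_n(x)\leq\rho_Q^{-1}\sum\dist$, both $r_n(x)D$ and $r_n(x)K_Q$ are bounded by a multiple of $(1+\rho_Q^{-1}\sum_j\dist(T^jx,Q))(A+D)$.

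The main obstacle we expect is this block bookkeeping. We must check that the number of good blocks, each contributing an additive $K_Q$, is controlled by the number of bad times; a good block has to be preceded by a bad time unless it is the first block. We must also verify that phase coherence really holds across an entire good block even when it starts at $j=0$. Once that is done, every constant is linear in $A+D+L_{\rm orb}\leq\|\Phi\|_{\mathcal{AA}_{\rm orb}}$. This gives $C_Q$ depending only on $p$ and $\rho_Q$, for instance $C_Q=(2p+2)(2+\rho_Q^{-1})\cdot 3+1$ or a similar explicit value.
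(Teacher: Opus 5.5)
Your proposal is correct and follows the paper's proof essentially step for step. The shared ingredients are the decomposition into maximal good and bad blocks, phase coherence combined with the orbit Lipschitz bound and Lemma~\ref{g:lem:phase} on good blocks, the crude $2A\ell$ bound on bad blocks, almost additivity across the at most $2b_n(x)+1$ blocks, and integration against $\nu$ followed by an infimum over representatives; the paper obtains $C_Q=1+(2p+3)/\rho_Q$, which your cruder explicit constant dominates, so either choice works.
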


\begin{proof}
	Fix $\Phi=\{\phi_n\}\in\mathcal{AA}_{\rm orb}(X,T)$ and abbreviate
	$A=\|\Phi\|_{\cA}$, $D=D(\Phi)$, $ L=L_{\rm orb}(\Phi)$,  $N_\Phi=A+D+L$. 
	Also write $a_Q=\Phi_*(\mu_Q)$ and
	$K_Q=K_Q(\Phi)$ as in Lemma~\ref{g:lem:phase}.
	
	Let $\{r,\ldots,r+\ell-1\}$ be a maximal consecutive interval such that
	$
	T^{r+j}x\in B(Q,\rho_Q)
	$
	for every $0\leq j<\ell$. 
	By \eqref{g:eq:phase-coherence}, there exists  $i$ such that  
	$
	\dist(T^{r+j}x,Q)=d(T^{r+j}x,q_{i+j\pmod p})
	$ for all $j\in\{0,\cdots,\ell-1\}$. 
	By the definition of $L=L_{\rm orb}(\Phi)$  and Lemma~\ref{g:lem:phase},
	\begin{equation}\label{g:eq:good-block}
		|\phi_\ell(T^rx)-\ell a_Q|
		\leq K_Q+L\sum_{j=0}^{\ell-1}
		\dist(T^{r+j}x,Q).
	\end{equation}	
	For a bad block of length $\ell$, the definition of $A=\|\Phi\|_{\cA}$ gives
	$
	\|\phi_\ell\|_\infty\leq\ell A,
	$ and
	$|a_Q|\leq A$ give
	\begin{equation}\label{g:eq:bad-block}
		|\phi_\ell(T^rx)-\ell a_Q|
		\leq2A\ell.
	\end{equation}	
	Recall that $r_n(x)$ denotes the total number of maximal good and bad
	blocks. By almost additivity,
	\begin{equation*}
		|\phi_n(x)-\sum_{s=1}^{r_n(x)}
		\phi_{\ell_s}(T^{t_s}x)|\leq
		(r_n(x)-1)D.
	\end{equation*}  Combining \eqref{g:eq:block-count},
	\eqref{g:eq:good-block}   and \eqref{g:eq:bad-block}, we have
	\begin{align*}
		|\phi_n(x)-na_Q|
		&\leq (b_n(x)+1)K_Q
		+L\sum_{j=0}^{n-1}\dist(T^jx,Q)
		+2A b_n(x)+2D b_n(x)\notag\\
		&\leq K_Q+
		\left(L+\frac{K_Q+2A+2D}{\rho_Q}\right)
		\sum_{j=0}^{n-1}\dist(T^jx,Q).
	\end{align*}
	By Lemma~\ref{g:lem:phase},
	$K_Q\leq(2p+1)N_\Phi$,  
	$K_Q+2A+2D\leq(2p+3)N_\Phi.$ 
	Thus  
	\begin{equation*}
		|\phi_n(x)-na_Q|
		\leq(2p+1)N_\Phi
		+\left(1+\frac{2p+3}{\rho_Q}\right)N_\Phi
		\sum_{j=0}^{n-1}\dist(T^jx,Q).
	\end{equation*}	
	Integrate with respect to $\nu$, 
	$$
	\int\sum_{j=0}^{n-1}\dist(T^jx,Q)\,d\nu(x)
	=n\int\dist(x,Q)\,d\nu(x).
	$$
	Divide by $n$ and let $n\to\infty$.  Thus, \eqref{g:eq:representative-domination} holds with
	$
	C_Q=1+(2p+3)/\rho_Q.
	$
	Finally, let $\eta\in\cE_{\rm orb}$.  For every $\delta>0$, choose  $\Phi$ such that
	$
	[\Phi]=\eta$ and $
	\|\Phi\|_{\mathcal{AA}_{\rm orb}}
	<\|\eta\|_{\cE_{\rm orb}}+\delta.
	$
	If $[\Psi]=\eta$, then
	$\Gamma=\Psi-\Phi=\{\gamma_n\}_{n\geq1}\in\mathcal N$ 
	Hence, for every $\mu\in\M(X,T)$, 		$$|\Psi_*(\mu)-\Phi_*(\mu)|
	\leq\lim_{n\to\infty}\frac1n\|\gamma_n\|_\infty=0.$$ 
	Thus $\eta_*(\mu)=\Phi_*(\mu)$.
	Apply \eqref{g:eq:representative-domination} and let
	$\delta\to0$ to give the proof.
\end{proof}

We next prove the global orbit Lipschitz locking theorem. It shows that if
$\mu_Q$ maximizes $\eta$, then subtracting a small multiple of the distance
from $Q$ makes $\mu_Q$ uniquely maximizing. Moreover, this property remains
valid under all sufficiently small perturbations in $\cE_{\rm orb}$
\begin{theorem}\label{g:thm:global-locking}
	Let $(X,T)$ be TDS, and $\eta\in\cE_{\rm orb}(X)$ and suppose
	$\mu_Q\in\Max(\eta)$.  For $\varepsilon>0$, define 
	$
		\eta_\varepsilon
		:=\eta-\varepsilon[S_\bullet d_Q].
	$
	Then $\mu_Q$ is the unique maximizing measure for every
	$\eta_\varepsilon+\theta$ satisfying
	\begin{equation}\label{g:eq:locking-ball}
		\|\theta\|_{\cE_{\rm orb}}<\frac{\varepsilon}{C_Q},
	\end{equation}
	where $C_Q$ is the constant in
	Theorem~\ref{thm:intro-global-locking}.  In particular, the  periodic
	measure $\mu_Q$ remains uniquely maximizing throughout an open neighborhood
	of $\eta_\varepsilon$.
\end{theorem}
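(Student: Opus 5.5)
The plan is to compare every invariant measure $\nu$ with $\mu_Q$ directly, using the domination estimate of Theorem~\ref{thm:intro-global-locking}. We will show that the penalty $-\varepsilon[S_\bullet d_Q]$ produces a gain proportional to $\int d_Q\,d\nu$. The perturbation $\theta$ can only offset a smaller multiple of that same integral.

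First we record that $[S_\bullet d_Q]\in\cE_{\rm orb}$, which holds because $d_Q$ is $1$-Lipschitz with zero additivity bound. Its functional is
$$[S_\bullet d_Q]_*(\nu)=\int d_Q\,d\nu,$$
which vanishes at $\mu_Q$. Now fix $\theta$ with $\|\theta\|_{\cE_{\rm orb}}<\varepsilon/C_Q$ and any $\nu\in\M(X,T)$. Linearity of $\eta\mapsto\eta_*(\nu)$ on $\cE_{\rm orb}$ gives
$$
(\eta_\varepsilon+\theta)_*(\mu_Q)-(\eta_\varepsilon+\theta)_*(\nu)
=\bigl(\eta_*(\mu_Q)-\eta_*(\nu)\bigr)+\varepsilon\int d_Q\,d\nu+\bigl(\theta_*(\mu_Q)-\theta_*(\nu)\bigr).
$$
The first term is $\geq0$ because $\mu_Q\in\Max(\eta)$. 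For the last term, the class-level inequality in Theorem~\ref{thm:intro-global-locking} bounds it below by $-C_Q\|\theta\|_{\cE_{\rm orb}}\int d_Q\,d\nu$. Hence the whole difference is at least
$$\bigl(\varepsilon-C_Q\|\theta\|_{\cE_{\rm orb}}\bigr)\int d_Q\,d\nu\geq0.$$
Therefore $\mu_Q\in\Max(\eta_\varepsilon+\theta)$.

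For uniqueness, suppose $\nu$ is also maximizing. Then the difference above is zero. Since the coefficient $\varepsilon-C_Q\|\theta\|$ is strictly positive, this forces $\int d_Q\,d\nu=0$. So $\nu$ is supported on the finite set $Q$. An invariant measure on a single periodic orbit must equal $\mu_Q$, because invariance forces equal weights on the points $q_i$. This establishes uniqueness.

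Finally, the set of $\eta_\varepsilon+\theta$ with $\|\theta\|_{\cE_{\rm orb}}<\varepsilon/C_Q$ is the open ball of that radius about $\eta_\varepsilon$, which gives the "in particular" clause.

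The only delicate point is the linearity of $\eta\mapsto\eta_*$ on the quotient, i.e.\ $(\eta+\theta)_*=\eta_*+\theta_*$. This follows because $\Phi\mapsto\Phi_*(\nu)$ is linear on representatives and is well defined on classes, as noted in Corollary~\ref{cor:orb-quotient}. Everything else is a direct application of Theorem~\ref{thm:intro-global-locking}.
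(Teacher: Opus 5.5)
Your proposal is correct and follows essentially the same argument as the paper. You expand $(\eta_\varepsilon+\theta)_*(\mu_Q)-(\eta_\varepsilon+\theta)_*(\nu)$, use maximality of $\mu_Q$ for $\eta$ together with the class-level domination estimate of Theorem~\ref{thm:intro-global-locking} to bound it below by $(\varepsilon-C_Q\|\theta\|_{\cE_{\rm orb}})\int d_Q\,d\nu$, and conclude uniqueness because $\int d_Q\,d\nu=0$ forces $\nu=\mu_Q$; your remarks on the linearity of $\eta\mapsto\eta_*$ on the quotient and on why an invariant measure supported on $Q$ must equal $\mu_Q$ simply spell out details the paper leaves implicit.
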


\begin{proof}
	Let $\nu\in\M(X,T)$.  Maximality of $\mu_Q$ for $\eta$ and
	Theorem~\ref{thm:intro-global-locking} give
	\begin{align*}
		(\eta_\varepsilon+\theta)_*(\nu)
		-(\eta_\varepsilon+\theta)_*(\mu_Q)
		&\leq-\varepsilon\int d_Q\,d\nu
		+C_Q\|\theta\|_{\cE_{\rm orb}}\int d_Q\,d\nu\\
		&=-\bigl(\varepsilon-C_Q\|\theta\|_{\cE_{\rm orb}}\bigr)  \int d_Q\,d\nu.
	\end{align*}
	Under \eqref{g:eq:locking-ball}, this is strictly negative unless
	$\nu$ is supported on $Q$. Since $mu_Q$ is the only invariant probability measure supported
	on the  periodic orbit $Q$.  Hence $\mu_Q$ is uniquely maximizing.
\end{proof}

We now give several equivalent criteria for global TPO in
$\cE_{\rm orb}$. We also introduce $X$-extendable TPO for a closed
invariant subsystem $Z$. The locking theorem and the global subsystem
reduction theorem are then used to transfer periodic optimization from
$Z$ to an open set $U\subset\cE_{\rm orb}(X,T)$ such that $Z$ is
$\eta$-maximizable for every $\eta\in U$.

\begin{definition}
	Let $(X,T)$ be a TDS and let $\eta\in\cE_{\rm orb}$.
	We say that $\eta$ has the \emph{periodic optimization property} if there
	exists   $\mu_Q\in \gPer(X,T)$ such that
	$$
	\Max(\eta)=\{\mu_Q\}.
	$$
	We say that $\eta$ has the \emph{weak periodic optimization property} if
	there exists  $\mu_Q\in \gPer(X,T)$ such that
	$$
	\mu_Q\in\Max(\eta),
	$$
	or equivalently,
	$
	\Max(\eta)\cap\Per(X,T)\neq\varnothing.
	$
	We say that $(X,T)$ has \emph{global TPO} in $\cE_{\rm orb}$ if there
	exists an open dense set $$\mathcal G\subset\cE_{\rm orb}$$ such that every
	$\eta\in\mathcal G$ has the periodic optimization property. Here openness
	and density are taken with respect to the norm of $\cE_{\rm orb}$.
\end{definition}
To compare periodic optimization, weak periodic optimization, and locking,
we introduce the following subsets of $\cE_{\rm orb}$.
\begin{align*}
	\mathcal U_!
	&:=\{\eta\in\cE_{\rm orb}:\eta\text{ has periodic optimization}\},\\
	\mathcal U_{\rm w}
	&:=\{\eta\in\cE_{\rm orb}:\eta\text{ has weak periodic optimization}\},\\
	\mathcal P
	&:=\interior(\mathcal U_!),\quad
	\mathcal P_+
	:=\interior(\mathcal U_{\rm w}),\quad
	\mathcal P_-
	:=\bigcup_{\mu\in\gPer(X,T)}
	\interior\{\eta\in\cE_{\rm orb}:\Max(\eta)=\{\mu\}\}.
\end{align*}
Clearly, 
\begin{equation}\label{g:eq:P-inclusions-global}
	\mathcal P_-
	\subset\mathcal P
	\subset\mathcal P_+
	\subset\mathcal U_{\rm w}.
\end{equation}
To understand how strong the openness condition in global TPO can be, we
compare the classes above. The strongest class, $\mathcal P_-$, requires one
fixed periodic measure to remain uniquely maximizing throughout an open
neighborhood. The following corollary shows that all these classes have the
same closure.
\begin{corollary}\label{g:cor:closure-equality}
	Let $(X,T)$ be a TDS. Then the sets defined above have the same closure in
	$\cE_{\rm orb}$, 
	$$
	\cl(\mathcal P_-)
	=\cl(\mathcal P)
	=\cl(\mathcal P_+)
	=\cl(\mathcal U_{\rm w}).
	$$
\end{corollary}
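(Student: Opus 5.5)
By the inclusions \eqref{g:eq:P-inclusions-global}, the four closures are already ordered:
$$\cl(\mathcal P_-)\subset\cl(\mathcal P)\subset\cl(\mathcal P_+)\subset\cl(\mathcal U_{\rm w}).$$
So I only need the reverse inclusion $\mathcal U_{\rm w}\subset\cl(\mathcal P_-)$. Since $\cl(\mathcal P_-)$ is closed, this single inclusion closes the chain and gives equality throughout.

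To prove it, I fix $\eta\in\mathcal U_{\rm w}$ and choose a periodic orbit $Q$ with $\mu_Q\in\Max(\eta)$. For $\varepsilon>0$ I set $\eta_\varepsilon:=\eta-\varepsilon[S_\bullet d_Q]$. As noted before Lemma~\ref{g:lem:scalar-deviation}, $S_\bullet d_Q\in\mathcal{AA}_{\rm orb}(X,T)$: it is an additive sequence of a Lipschitz function, with almost additivity bound zero and orbit Lipschitz constant at most one. Hence $[S_\bullet d_Q]$ is a fixed element of $\cE_{\rm orb}$ with some finite norm $c$, and $\|\eta_\varepsilon-\eta\|_{\cE_{\rm orb}}\leq\varepsilon c\to0$ as $\varepsilon\to0$.

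Next I apply the global locking theorem, Theorem~\ref{g:thm:global-locking}. Every $\eta_\varepsilon+\theta$ with $\|\theta\|_{\cE_{\rm orb}}<\varepsilon/C_Q$ has $\Max(\eta_\varepsilon+\theta)=\{\mu_Q\}$. The open ball $B(\eta_\varepsilon,\varepsilon/C_Q)$ is therefore contained in $\{\zeta:\Max(\zeta)=\{\mu_Q\}\}$, hence in the interior of that set. Thus $\eta_\varepsilon\in\mathcal P_-$, and letting $\varepsilon\to0$ gives $\eta\in\cl(\mathcal P_-)$.

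There is no serious obstacle. The point that most needs checking is the well-definedness of the class $[S_\bullet d_Q]$ and the finiteness of its norm, which were recorded earlier in the paper. The rest follows directly from the locking theorem.
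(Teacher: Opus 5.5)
Your proposal is correct and follows the paper's proof essentially verbatim. You reduce via the inclusion chain to $\mathcal U_{\rm w}\subset\cl(\mathcal P_-)$, penalize by $\varepsilon[S_\bullet d_Q]$, and invoke the global locking theorem to place $\eta_\varepsilon$ in $\mathcal P_-$ with $\eta_\varepsilon\to\eta$.
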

\begin{proof}
	It is enough to prove
	$\mathcal U_{\rm w}\subset\cl(\mathcal P_-)$. 	
	Let $\eta\in\mathcal U_{\rm w}$. By the definition of weak periodic
	optimization, there exists a periodic orbit $Q$ such that
	$
	\mu_Q\in\Max(\eta).
	$
	For every $\varepsilon>0$, set
	$
	\eta_\varepsilon=\eta-\varepsilon[S_\bullet d_Q].
	$
	By Theorem~\ref{g:thm:global-locking}, $\mu_Q$ is the unique maximizing
	measure for every element in an open neighborhood of
	$\eta_\varepsilon$. Therefore,
	$$
	\eta_\varepsilon
	\in
	\interior
	\{\zeta\in\cE_{\rm orb}:\Max(\zeta)=\{\mu_Q\}\}
	\subset\mathcal P_-.
	$$
	Moreover, $$\|\eta_\varepsilon-\eta\|_{\cE_{\rm orb}}
	=\varepsilon\|[S_\bullet d_Q]\|_{\cE_{\rm orb}}\to0\quad \text{ as } \varepsilon\to0.$$
	Hence
	$\eta_\varepsilon\longrightarrow\eta\in\cE_{\rm orb}$, 
	and so $\eta\in\cl(\mathcal P_-)$. Thus,
	$
	\mathcal U_{\rm w}\subset\cl(\mathcal P_-).
	$
	Combining this inclusion with
	\eqref{g:eq:P-inclusions-global} proves the result.
\end{proof}

The equality of closures gives several equivalent ways to verify global TPO.
In particular, it is enough to prove that weak periodic optimization is
dense; global locking then gives density of the strongest open class
$\mathcal P_-$. This yields the following equivalenc
\begin{corollary}\label{g:cor:TPO-equivalence}
	The following are equivalent:
	\begin{enumerate}[label=\textup{(\roman*)}]
		\item $(X,T)$ has global TPO in $\cE_{\rm orb}$;
		\item $\mathcal P$ is dense in $\cE_{\rm orb}$;
		\item $\mathcal U_!$ is dense in $\cE_{\rm orb}$;
		\item $\mathcal P_+$ is dense in $\cE_{\rm orb}$;
		\item $\mathcal U_{\rm w}$ is dense in $\cE_{\rm orb}$;
		\item $\mathcal P_-$ is dense in $\cE_{\rm orb}$.
	\end{enumerate}
\end{corollary}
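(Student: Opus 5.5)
The six conditions reduce to two chains of implications. Both rest on the closure identity of Corollary~\ref{g:cor:closure-equality} and on the inclusions \eqref{g:eq:P-inclusions-global}. We first record the trivial inclusions
$$
\mathcal P_-\subset\mathcal P\subset\mathcal U_!\subset\mathcal U_{\rm w},
\qquad
\mathcal P\subset\mathcal P_+\subset\mathcal U_{\rm w}.
$$
Here $\mathcal P\subset\mathcal U_!$ because an interior is contained in the set, and $\mathcal U_!\subset\mathcal U_{\rm w}$ because a unique periodic maximizing measure is in particular a periodic maximizing measure. Consequently, density of any of the sets $\mathcal P_-,\mathcal P,\mathcal P_+,\mathcal U_!$ implies density of $\mathcal U_{\rm w}$. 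This gives (vi)$\Rightarrow$(ii)$\Rightarrow$(iii)$\Rightarrow$(v) and (ii)$\Rightarrow$(iv)$\Rightarrow$(v).

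For the reverse direction, (v)$\Rightarrow$(vi) is exactly Corollary~\ref{g:cor:closure-equality}. Indeed, if $\cl(\mathcal U_{\rm w})=\cE_{\rm orb}$, then $\cl(\mathcal P_-)=\cl(\mathcal U_{\rm w})=\cE_{\rm orb}$. Behind this lies global locking (Theorem~\ref{g:thm:global-locking}). Each $\eta\in\mathcal U_{\rm w}$ is approximated by $\eta-\varepsilon[S_\bullet d_Q]$, and each such class lies in the interior of the set where $\mu_Q$ is the unique maximizing measure. So the whole cycle (ii)--(vi) is closed.

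It remains to connect (i). If (ii) holds, then $\mathcal G:=\mathcal P$ is open, being an interior, and dense by assumption. Every element of $\mathcal P\subset\mathcal U_!$ has the periodic optimization property, which gives (i). Conversely, suppose $\mathcal G$ is an open dense set of classes with periodic optimization. Then $\mathcal G\subset\mathcal U_!$, and since $\mathcal G$ is open, $\mathcal G\subset\interior(\mathcal U_!)=\mathcal P$. Hence $\mathcal P$ is dense, which is (ii).

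None of these steps is a real obstacle: all the analytic content already sits in the locking estimate of Theorem~\ref{thm:intro-global-locking} and in Corollary~\ref{g:cor:closure-equality}. The one point to state explicitly is that global TPO requires the set $\mathcal G$ to be open. This is what makes $\mathcal G\subset\mathcal P$, rather than merely $\mathcal G\subset\mathcal U_!$, and it is why (iii) needs the closure corollary rather than a direct argument.
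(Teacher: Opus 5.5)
Your proof is correct and takes the same route as the paper. The paper's proof combines the closure identity of Corollary~\ref{g:cor:closure-equality} with the inclusions \eqref{g:eq:P-inclusions-global} and the fact that $\mathcal P=\interior(\mathcal U_!)$ is the largest open subset of $\mathcal U_!$, and you simply write out the implication chain, including (i)$\Leftrightarrow$(ii), that the paper leaves implicit.
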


\begin{proof}
	Use Corollary~\ref{g:cor:closure-equality} and the fact that
	$\mathcal P=\interior(\mathcal U_!)$ is the largest open subset of
	$\mathcal U_!$.
\end{proof}
The subsystem reduction theorem is stated on the extendable space
$\cE_{\rm orb}^{X}(Z,T)$. To apply this theorem to periodic optimization,
we introduce $X$-extendable TPO on this space. 
\begin{definition}
	Let $(X,T)$ be a TDS, and let $Z\subset X$ be a non-empty closed
	invariant set.	
	A class $\zeta\in\cE_{\rm orb}^{X}(Z,T)$ has the
	\emph{weak periodic optimization property} if
	$\mathcal M_{\max,Z}(\zeta)\cap\gPer(Z,T)\neq\varnothing.$ 
	Define
	$$
	\mathcal U_{\rm w}^{X}(Z)
	:=
	\left\{
	\zeta\in\cE_{\rm orb}^{X}(Z,T):
	\mathcal M_{\max,Z}(\zeta)\cap\gPer(Z,T)\neq\varnothing
	\right\}.
	$$
	A class $\zeta\in\cE_{\rm orb}^{X}(Z,T)$ has the
	\emph{periodic optimization property} if
	$$
	\mathcal M_{\max,Z}(\zeta)=\{\mu_Q\}
	$$
	for some $\mu_Q\in\gPer(Z,T)$.
	
	We say that $(Z,T)$ has \emph{$X$-extendable TPO} if there exists an
	open and dense set
	$$
	\mathcal G_Z^{X}\subset\cE_{\rm orb}^{X}(Z,T)
	$$
	such that every $\zeta\in\mathcal G_Z^{X}$ has the periodic
	optimization property. Here openness and density are taken with respect
	to the lifting norm $\|\cdot\|_{Z\leftarrow X}$.
\end{definition}
If $Q\subset Z$ is a periodic orbit, then
$
[S_\bullet\dist_X(\cdot,Q)]|_Z
\in\cE_{\rm orb}^{X}(Z,T),
$
because it is the restriction of
$[S_\bullet\dist_X(\cdot,Q)]\in\cE_{\rm orb}(X,T)$.
The periodic domination theorem on $Z$, together with
\eqref{x:eq:two-restriction-norms}, gives the same locking estimate in the
lifting norm. The next lemma therefore gives equivalent conditions for
$X$-extendable TPO.

\begin{lemma}
	\label{x:lem:extendable-TPO-equivalence}
	The following are equivalent:
	\begin{enumerate}[label=\textup{(\roman*)}]
		\item $(Z,T)$ has $X$-extendable TPO;
		\item $\mathcal U_!^{X}(Z)$ is dense in
		$\cE_{\rm orb}^{X}(Z,T)$;
		\item $\mathcal U_{\rm w}^{X}(Z)$ is dense in
		$\cE_{\rm orb}^{X}(Z,T)$.
	\end{enumerate}
\end{lemma}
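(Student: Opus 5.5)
The plan is to mirror the global argument of Corollary~\ref{g:cor:closure-equality} and Corollary~\ref{g:cor:TPO-equivalence}, working inside the Banach space $\bigl(\cE_{\rm orb}^{X}(Z,T),\|\cdot\|_{Z\leftarrow X}\bigr)$ given by Proposition~\ref{x:prop:quotient-lifting}. Here $\mathcal U_!^{X}(Z)$ denotes the set of $\zeta\in\cE_{\rm orb}^{X}(Z,T)$ with the periodic optimization property. The implications (i)$\Rightarrow$(ii)$\Rightarrow$(iii) are formal: an open dense set of classes with periodic optimization lies inside $\mathcal U_!^{X}(Z)$, and $\mathcal U_!^{X}(Z)\subset\mathcal U_{\rm w}^{X}(Z)$. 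The real work is (iii)$\Rightarrow$(i). For that I will show that every weakly periodic class is a limit of classes lying in the interior of a set $\{\zeta:\mathcal M_{\max,Z}(\zeta)=\{\mu_Q\}\}$. The union of these interiors is then an open set. By (iii) it is also dense, and each of its members has periodic optimization.

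\textbf{Step 1: locking in the lifting norm.} Fix a periodic orbit $Q\subset Z$ and let $d_Q:=\dist_X(\cdot,Q)$. Apply Theorem~\ref{thm:intro-global-locking} to the system $(Z,T)$, with $Q$ regarded as a periodic orbit of $Z$. This gives a constant $C_Q^Z$ such that, for every $\zeta\in\cE_{\rm orb}(Z,T)$ and every $\nu\in\M(Z,T)$,
\begin{equation*}
|\zeta_*(\nu)-\zeta_*(\mu_Q)|\leq C_Q^Z\|\zeta\|_{\cE_{\rm orb}(Z,T)}\int \dist(x,Q)\,d\nu .
\end{equation*}
On $Z$ the function $\dist_Z(\cdot,Q)$ is the restriction of $d_Q$, since $Z$ carries the induced metric. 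By \eqref{x:eq:two-restriction-norms}, the same estimate holds with $\|\zeta\|_{Z\leftarrow X}$ in place of $\|\zeta\|_{\cE_{\rm orb}(Z,T)}$.

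\textbf{Step 2: the perturbation.} The class $[S_\bullet d_Q]|_Z=R_Z[S_\bullet d_Q]$ lies in $\cE_{\rm orb}^{X}(Z,T)$. Given $\zeta\in\mathcal U_{\rm w}^{X}(Z)$ with $\mu_Q\in\mathcal M_{\max,Z}(\zeta)$, set
\begin{equation*}
\zeta_\varepsilon:=\zeta-\varepsilon R_Z[S_\bullet d_Q].
\end{equation*}
Let $\theta\in\cE_{\rm orb}^{X}(Z,T)$ satisfy $\|\theta\|_{Z\leftarrow X}<\varepsilon/C_Q^Z$, and let $\nu\in\M(Z,T)$. Repeating the proof of Theorem~\ref{g:thm:global-locking}, and using that $\mu_Q$ maximizes $\zeta$ over $\M(Z,T)$, we get
\begin{equation*}
(\zeta_\varepsilon+\theta)_*(\nu)-(\zeta_\varepsilon+\theta)_*(\mu_Q)\leq -\bigl(\varepsilon-C_Q^Z\|\theta\|_{Z\leftarrow X}\bigr)\int d_Q\,d\nu .
\end{equation*}
The right-hand side is strictly negative unless $\nu$ is supported on $Q$, that is, unless $\nu=\mu_Q$. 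Hence $\zeta_\varepsilon$ lies in the interior, for the lifting norm, of $\{\zeta':\mathcal M_{\max,Z}(\zeta')=\{\mu_Q\}\}$. Moreover
\begin{equation*}
\|\zeta_\varepsilon-\zeta\|_{Z\leftarrow X}\leq\varepsilon\|[S_\bullet d_Q]\|_{\cE_{\rm orb}(X,T)}\to0 .
\end{equation*}
It follows that $\mathcal U_{\rm w}^{X}(Z)$ is contained in the closure of
\begin{equation*}
\mathcal P_-^{X}(Z):=\bigcup_{Q\subset Z}\interior\{\zeta:\mathcal M_{\max,Z}(\zeta)=\{\mu_Q\}\}.
\end{equation*}

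\textbf{Step 3: conclusion.} If (iii) holds, then $\mathcal P_-^{X}(Z)$ is open and dense, and it consists of classes with periodic optimization. So $\mathcal G_Z^X:=\mathcal P_-^{X}(Z)$ witnesses (i).

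The main obstacle is Step 1. Theorem~\ref{thm:intro-global-locking} is stated for the full space of the given system, so I must check that it applies verbatim to $(Z,T)$. It does, since $(Z,T)$ is itself a TDS, and every element of $\cE_{\rm orb}^{X}(Z,T)$ lies in $\cE_{\rm orb}(Z,T)$ by Lemma~\ref{x:lem:contractive-restriction}. I also need that the norm comparison goes in the right direction. The lifting norm dominates the intrinsic norm, so smallness of $\theta$ in the lifting norm is enough for the estimate.
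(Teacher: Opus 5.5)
Your proposal is correct and follows the paper's proof: you perturb a weakly periodic class by $-\varepsilon R_Z[S_\bullet \dist_X(\cdot,Q)]$, apply the locking estimate of Theorem~\ref{thm:intro-global-locking} on $(Z,T)$ together with $\|\cdot\|_{\cE_{\rm orb}(Z,T)}\leq\|\cdot\|_{Z\leftarrow X}$, and conclude that the weakly periodic classes lie in the closure of an open set of periodically optimized classes. The differences are cosmetic: you take the open dense set to be $\mathcal P_-^{X}(Z)$ rather than $\interior(\mathcal U_!^{X}(Z))$, and you write out the locking inequality that the paper only asserts.
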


\begin{proof}
	The implications \textup{(i)}$\Rightarrow$\textup{(ii)}
	and \textup{(ii)}$\Rightarrow$\textup{(iii)} follow directly from
	the definitions.
	
	Let
	$\zeta\in\mathcal U_{\rm w}^{X}(Z)$ and choose a periodic orbit
	$Q\subset Z$ such that
	$\mu_Q\in\mathcal M_{\max,Z}(\zeta)$. For $t>0$, set
	$$
	\zeta_t
	:=
	\zeta-t[S_\bullet\dist_X(\cdot,Q)]|_Z.
	$$
	The global locking theorem applied to $(Z,T)$, together with
	$
	\|\theta\|_{\cE_{\rm orb}(Z,T)}
	\leq
	\|\theta\|_{Z\leftarrow X},
	$
	shows that $\zeta_t$ belongs to
	$\interior\bigl(\mathcal U_!^{X}(Z)\bigr)$.
	
	Moreover,
	$$
	\|\zeta_t-\zeta\|_{Z\leftarrow X}
	\longrightarrow0
	\qquad\text{as }t\to0.
	$$
	Hence
	$
	\mathcal U_{\rm w}^{X}(Z)
	\subset
	\cl\bigl(\interior(\mathcal U_!^{X}(Z))\bigr).
	$
	Since $\mathcal U_{\rm w}^{X}(Z)$ is dense,
	$\interior(\mathcal U_!^{X}(Z))$ is open and dense in
	$\cE_{\rm orb}^{X}(Z,T)$. Therefore, $(Z,T)$ has
	$X$-extendable TPO.
\end{proof}

\begin{remark}
	Global TPO and $X$-extendable TPO have the same form, but they are defined
	on different spaces and have different roles. 
	There are three cases.
	\begin{enumerate}[label=\textup{(\roman*)}]
		\item If $Z=X$, then
		$
		\cE_{\rm orb}^{X}(X,T)=\cE_{\rm orb}(X,T),
		$
		and the lifting norm is the norm of $\cE_{\rm orb}(X,T)$. Hence
		$X$-extendable TPO is exactly global TPO.
		
		\item Suppose that $Z\subsetneq X$ and $R_Z$ is surjective. Then
		$$
		\cE_{\rm orb}^{X}(Z,T)=\cE_{\rm orb}(Z,T),
		$$
		and their norms are equivalent by
		Theorem~\ref{x:thm:surjective-restriction}. Therefore,
		$X$-extendable TPO is equivalent to TPO of $(Z,T)$ in
		$\cE_{\rm orb}(Z,T)$. However, this is still a property of the
		subsystem $(Z,T)$ and does not by itself give global TPO on $X$.
		
		\item If $R_Z$ is not surjective, then
		$$
		\cE_{\rm orb}^{X}(Z,T)
		=
		R_Z(\cE_{\rm orb}(X,T))
		\subsetneq
		\cE_{\rm orb}(Z,T).
		$$
		In this case, $X$-extendable TPO is defined only for the classes on
		$Z$ that have a lift to $X$. It does not imply TPO in  
		$\cE_{\rm orb}(Z,T)$ without an additional assumption.
	\end{enumerate}
\end{remark}

Combining the global subsystem reduction theorem with global locking, we
obtain the following transfer of periodic optimization from $Z$ to $X$.
\begin{corollary}\label{x:cor:local-periodic-transfer}
	Under the hypotheses on $U$ and $Z$ in
	Theorem~\ref{x:thm:subsystem-reduction-extendable}, suppose that $(Z,T)$ has
	$X$-extendable TPO.  Then
	\begin{equation*} 
		\mathcal P\cap U\quad\text{is dense in }U.
	\end{equation*}
\end{corollary}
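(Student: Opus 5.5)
The plan is to chain three earlier results: the subsystem reduction Theorem~\ref{x:thm:subsystem-reduction-extendable} with $\mathscr N=\gPer(X,T)$, the locking Theorem~\ref{g:thm:global-locking}, and Corollary~\ref{g:cor:closure-equality} (in its local form). Concretely, we will show that $\mathcal U_{\rm w}\cap U$ is dense in $U$ and then that every point of $\mathcal U_{\rm w}$ is a limit of points of $\mathcal P_-\subset\mathcal P$. Since $U$ is open, these limits can be taken inside $U$.

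First, since $(Z,T)$ has $X$-extendable TPO, Lemma~\ref{x:lem:extendable-TPO-equivalence} shows that $\mathcal U_{\rm w}^X(Z)$ is dense in $\cE_{\rm orb}^X(Z,T)$. Take $\mathscr N:=\gPer(X,T)$. A periodic measure in $\M(Z,T)$ is a periodic measure of $(X,T)$ supported on $Z$, and a periodic orbit contained in $Z$ is periodic for $(Z,T)$. Hence $\mathcal M_{\max,Z}(\zeta)\cap\gPer(Z,T)\neq\varnothing$ is the same as $\mathcal M_{\max,Z}(\zeta)\cap\mathscr N\cap\M(Z,T)\neq\varnothing$, so $\mathcal W_Z^X(\mathscr N)=\mathcal U_{\rm w}^X(Z)$ is dense. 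The one subtlety is that $\mathscr N$ need not be closed. Theorem~\ref{x:thm:subsystem-reduction-extendable}, however, uses only density of $\mathcal W_Z^X(\mathscr N)$, the lifting property of Proposition~\ref{x:prop:quotient-lifting}, and maximizability of $Z$ on $U$, not closedness. Its conclusion therefore gives that $\cE_{\mathscr N}\cap U=\mathcal U_{\rm w}\cap U$ is dense in $U$.

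Second, fix $\eta_0\in U$ and $\varepsilon>0$, and choose $\eta\in\mathcal U_{\rm w}\cap U$ with $\|\eta-\eta_0\|<\varepsilon/2$. Let $Q$ be a periodic orbit with $\mu_Q\in\Max(\eta)$. For small $t>0$, set $\eta_t=\eta-t[S_\bullet d_Q]$. By Theorem~\ref{g:thm:global-locking}, the ball of radius $t/C_Q$ around $\eta_t$ consists of classes with $\Max=\{\mu_Q\}$, so $\eta_t\in\mathcal P_-\subset\mathcal P$. Choosing $t$ with $t\|[S_\bullet d_Q]\|_{\cE_{\rm orb}}<\varepsilon/2$, and also small enough that $\eta_t\in U$ (possible since $U$ is open), gives $\eta_t\in\mathcal P\cap U$ with $\|\eta_t-\eta_0\|<\varepsilon$. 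This proves the density of $\mathcal P\cap U$ in $U$.

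The main obstacle is the first step. One must check that Theorem~\ref{x:thm:subsystem-reduction-extendable} genuinely applies to the non-closed set $\mathscr N=\gPer(X,T)$, and that the periodic measures of $(Z,T)$ are identified correctly with elements of $\mathscr N\cap\M(Z,T)$. Rereading that proof confirms that only pointwise membership $\mu\in\mathcal M_{\max,Z}(\zeta)\cap\mathscr N$ is used, together with the inclusion $\mathcal M_{\max,Z}(R_Z\eta')\subset\Max(\eta')$, which holds whenever $Z$ is $\eta'$-maximizable by Lemma~\ref{lem:restriction}. So the argument goes through.
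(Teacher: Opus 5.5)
Your proposal is correct and follows the paper's proof. You apply Theorem~\ref{x:thm:subsystem-reduction-extendable} with $\mathscr N=\gPer(X,T)$ to get density of weak periodic optimization in $U$, then upgrade locally to $\mathcal P$; the only difference is that you write out the paper's appeal to Corollary~\ref{g:cor:closure-equality} on small balls inside $U$ as a direct use of Theorem~\ref{g:thm:global-locking}, and your worry about closedness of $\mathscr N$ is moot, since that theorem never assumes it.
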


\begin{proof}
	Apply Theorem~\ref{x:thm:subsystem-reduction-extendable} with
	$\mathscr N=\gPer(X,T)$.  Since
	$\gPer(X,T)\cap\M(Z,T)=\gPer(Z,T)$, weak periodic optimization is dense in
	$U$.  The equality of closures in Corollary~\ref{g:cor:closure-equality},
	applied inside sufficiently small open balls contained in $U$, gives the proof.
\end{proof}

If $R_Z$ is surjective, Theorem~\ref{x:thm:surjective-restriction} shows that
the  norm on $\cE_{\rm orb}(Z,T)$ and the lifting norm are equivalent.
Hence  global TPO of $(Z,T)$ implies $X$-extendable TPO, and
Corollary~\ref{x:cor:local-periodic-transfer} applies.  By
Theorem~\ref{x:thm:retraction-lifting}, the same conclusion holds when $Z$
admits an equivariant Lipschitz retraction from $X$.

The implications above can be summarized as follows:
\begin{equation*}
	\substack{
		(Z,T)\text{ has TPO in }\cE_{\rm orb}(Z,T)\\
		R_Z\text{ is surjective}
	}
	\Longrightarrow
	\substack{
		(Z,T)\text{ has}\\
		X\text{-extendable TPO}
	}
	\Longrightarrow
	\substack{
		\mathcal U_{\rm w}\cap U\\
		\text{is dense in }U
	}
	\Longrightarrow
	\substack{
		\mathcal P\cap U\\
		\text{is dense in }U.
	}
\end{equation*}

Combining the Baire reduction with the local periodic transfer result, we
give the proof of Theorem \ref{thm:intro-global-structural}.

\begin{proof}[Proof of Theorem~\ref{thm:intro-global-structural}]
	Put $U_i:=\interior(\cE_{Z_i})$.  By
	Theorem~\ref{g:thm:Baire-global}, $\bigcup_{i\geq0}U_i$ is dense in $\cE_{\rm orb}(X,T)$.
	For every $i\geq1$, the set $Z_i$ is maximizable throughout $U_i$, so
	Corollary~\ref{x:cor:local-periodic-transfer} says that
	$\mathcal P\cap U_i$ is dense in $U_i$.  Therefore
	$U_0\cup\mathcal P$ is dense in $\bigcup_{i\geq0}U_i$, and hence in
	$\cE_{\rm orb}(X,T)$.
\end{proof}

\section{A global TPO example}

In this section, we construct a compact dynamical system from countably many
periodic orbits. We prove that it has global TPO in
$\cE_{\rm orb}(X,T)$. Moreover, $\cE_{\rm orb}(X,T)$ is infinite-dimensional,
and the periods of the maximizing periodic orbits can be arbitrarily large.

For $m\geq1$, set
$ p_m:=m+1,$ $ a_m:=2^{-m}$, $Q_m:=\{(m,j):j\in\mathbb Z/p_m\mathbb Z\}.$ 
Let $Q_0:=\{\ast\}$ and  $Y:=\{\ast\}\sqcup\bigsqcup_{m\geq1}Q_m.$ 
Define 
$
r(\ast)=0$, $
r(m,j)=a_m,
$
and define the  metric
\begin{equation}\label{b:eq:star-metric}
	d_Y(u,v)
	:=
	\begin{cases}
		0,&u=v,\\
		r(u)+r(v),&u\neq v.
	\end{cases}
\end{equation}
Define $R:Y\to Y$ by
$R(\ast)=\ast,$  $R(m,j)=(m,j+1\!\!\pmod {p_m}).$

We begin with the topological and dynamical properties of $Y$ and the dynamics of $R$. These facts
will also be used below to describe the invariant measures of the system.
\begin{lemma}\label{b:lem:Y-compact}
	The space $(Y,d_Y)$ is compact, and $R$ is an isometry.  The orbit $Q_m$ has
	prime period $p_m=m+1$, while $Q_0$ is a fixed point.
\end{lemma}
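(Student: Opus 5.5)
The plan is to check the three claims of Lemma~\ref{b:lem:Y-compact} directly from the definitions \eqref{b:eq:star-metric} and of $R$, with compactness being the only point that needs an argument.

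First, $d_Y$ is a metric. Symmetry and $d_Y(u,v)=0\iff u=v$ are clear, since $r(u)+r(v)>0$ whenever $u\neq v$: at most one of $u,v$ equals $\ast$, and $r>0$ off $\ast$. For the triangle inequality with $u,v,w$ distinct we have $r(u)+r(w)\le (r(u)+r(v))+(r(v)+r(w))$, and the remaining cases are trivial. Note also that $d_Y(u,\ast)=r(u)$.

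For compactness I will show that every sequence $(y_k)$ in $Y$ has a convergent subsequence. If some point occurs infinitely often, take the constant subsequence. Otherwise the $y_k$ take infinitely many distinct values. Each $Q_m$ is finite, so only finitely many terms lie in $Q_1\cup\cdots\cup Q_M$ for any fixed $M$. Choose a subsequence with $y_k\in Q_{m_k}$ and $m_k\to\infty$. Then $d_Y(y_k,\ast)=2^{-m_k}\to0$, so the subsequence converges to $\ast$. Equivalently, one could argue that every open set containing $\ast$ contains all but finitely many points of $Y$, which gives compactness via finite subcovers. I expect this step to be the only mild obstacle, and it is routine.

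For the isometry claim, note that $r\circ R=r$, because $R$ fixes $\ast$ and maps $Q_m$ to itself, where $r\equiv a_m$. Moreover $R$ is a bijection, since it is a cyclic rotation on each $Q_m$. Hence $R(u)\neq R(v)$ whenever $u\neq v$, and $d_Y(Ru,Rv)=r(Ru)+r(Rv)=r(u)+r(v)=d_Y(u,v)$. In particular $R$ is continuous.

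Finally, $R^k(m,j)=(m,j+k \bmod p_m)$. This equals $(m,j)$ exactly when $p_m\mid k$, so $Q_m$ is a single orbit of prime period $p_m=m+1$. The point $\ast$ is fixed by definition.
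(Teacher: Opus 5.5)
Your proposal is correct and follows essentially the same route as the paper. You verify the metric axioms via $d_Y(u,v)=r(u)+r(v)$, get sequential compactness by splitting into a constant subsequence versus a subsequence in $Q_{m_k}$ with $m_k\to\infty$ converging to $\ast$, and obtain the isometry from $r\circ R=r$ together with injectivity of $R$; your explicit check of the triangle inequality and of the prime period via $R^k(m,j)=(m,j+k \bmod p_m)$ just fills in details the paper only asserts.
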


\begin{proof}
	For $u\in Y$, set
	$
	r(u):=d_Y(u,\ast).
	$
	Then $r(\ast)=0$ and $r(u)=a_m$ for $u\in Q_m$. By
	\eqref{b:eq:star-metric}, for $u\neq v$,
	$
	d_Y(u,v)=r(u)+r(v).
	$
	For any $u,v,w\in Y$,
	$
	d_Y(u,v)\leq d_Y(u,w)+d_Y(w,v).
	$
	Hence $d_Y$ is a metric on $Y$. 
	Let $\{u_n\}_{n\geq1}\subset Y$. If $\{u_n\}_{n\geq1}$ belong to only finitely many sets $Q_m$, then they
	belong to a finite subset of $Y$. Thus $\{u_n\}$ has a constant
	subsequence. Otherwise, there is
	a subsequence $\{u_{n_k}\}$ such that
	$
	u_{n_k}\in Q_{m_k}
	$
	and $m_k\to\infty$. Since $a_m\to0$,
	$
	d_Y(u_{n_k},\ast)=a_{m_k}\to0.
	$
	Hence $u_{n_k}\to\ast$. This proves that $Y$ is compact. For every $u\in Y$,
	$
	r(Ru)=r(u).
	$
	The map $R$ fixes $\ast$ and permutes the points of each $Q_m$. If
	$u\neq v$, then $Ru\neq Rv$, and
	\begin{align*}
		d_Y(Ru,Rv)
		=r(Ru)+r(Rv)
		=r(u)+r(v)
		=d_Y(u,v).
	\end{align*}
	Thus $R$ is an isometry.
	By the definition of $R$,  $Q_m$ has  period $p_m=m+1$ for $m\geq1$, and
	$Q_0=\{\ast\}$ is a fixed point. 
\end{proof}
Fix $0<\lambda<1$ and define, 
\begin{align}\label{b:eq:system}
	X:=Y\times[0,1],\quad
	d_X((u,t),(v,s)):=d_Y(u,v)+|t-s|,\quad 
	T(u,t):=(Ru,\lambda t).	 
\end{align} 
For $m\geq0$,  let $
Z_m:=Q_m\times\{0\}.$ 
When $m=0$ this means $Z_0=\{(\ast,0)\}$.
We first describe the periodic orbits of $(X,T)$. Since the second coordinate
is multiplied by $\lambda<1$, every periodic point must lie in
$Y\times\{0\}$. Thus the periodic orbits are exactly the sets $Z_m$.
\begin{proposition}\label{b:prop:orbits}
	The set $Z_m$ is a periodic orbit of prime period $p_m=m+1$ for $m\geq1$,
	and $Z_0$ is a fixed point.  Every $(u,t)$ with $t>0$ is non-periodic.
\end{proposition}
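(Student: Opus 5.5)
The plan is to read off the periodic structure from the product form $T(u,t)=(Ru,\lambda t)$: the second coordinate alone rules out periodicity when $t>0$, and Lemma~\ref{b:lem:Y-compact} supplies the periods on $Y\times\{0\}$.

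\emph{Step 1: points with $t>0$.} Compute the iterates $T^n(u,t)=(R^nu,\lambda^n t)$ for all $n\geq1$, by induction on $n$. Suppose $(u,t)$ with $t>0$ were periodic, so $T^n(u,t)=(u,t)$ for some $n\geq1$. Comparing second coordinates gives $\lambda^n t=t$. This forces $t=0$, because $0<\lambda^n<1$. Hence every $(u,t)$ with $t>0$ is non-periodic.

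\emph{Step 2: $Z_m$ is invariant and periodic.} On $Y\times\{0\}$ the map reduces to $T(u,0)=(Ru,0)$, so $T^n(u,0)=(R^nu,0)$. Consequently $(u,0)$ is $T$-periodic with prime period $k$ if and only if $u$ is $R$-periodic with prime period $k$. By Lemma~\ref{b:lem:Y-compact} (or directly from $R(m,j)=(m,j+1 \bmod p_m)$), $Q_m$ is a single $R$-orbit of prime period $p_m=m+1$, and $\ast$ is fixed. Therefore $Z_m=Q_m\times\{0\}$ is a single $T$-orbit of prime period $m+1$ for $m\geq1$, and $Z_0=\{(\ast,0)\}$ is a fixed point.

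\emph{Step 3 (optional): no other periodic orbits.} Every point of $Y$ lies either at $\ast$ or in some $Q_m$. By Step 1, every periodic orbit of $(X,T)$ therefore equals some $Z_m$.

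None of the steps is a real obstacle. The only point needing care is the prime period claim, and that is inherited verbatim from Lemma~\ref{b:lem:Y-compact} through the conjugacy $u\mapsto(u,0)$ between $R$ and $T|_{Y\times\{0\}}$.
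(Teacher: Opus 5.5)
Your proof is correct and follows essentially the same route as the paper: the paper uses the iterate formula $T^n(u,t)=(R^nu,\lambda^nt)$ to rule out periodicity when $t>0$, and reads off the periods on $Y\times\{0\}$ directly from Lemma~\ref{b:lem:Y-compact}. Your optional Step~3, that the $Z_m$ exhaust all periodic orbits, is a harmless addition that the paper states only in the text preceding the proposition.
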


\begin{proof}
	For every $n\geq0$,
	$
	T^n(u,t)=(R^nu,\lambda^nt).
	$
	If $t>0$ and $n\geq1$, then $\lambda^nt\neq t$, so $(u,t)$ cannot be
	periodic.  The remaining assertions follow from Lemma~\ref{b:lem:Y-compact}.
\end{proof}

The next proposition gives  every invariant measure is
supported on
$
Y\times\{0\}=\bigsqcup_{m\geq0}Z_m,
$
and is a unique convex combination of the periodic measures $\mu_m$. For $m\geq1$, let $ 
\mu_m
:=\frac1{p_m}\sum_{j=0}^{p_m-1}
\delta_{((m,j),0)},$ 
$\mu_0:=\delta_{(\ast,0)}.$

\begin{proposition}
	\label{b:prop:measures}
	Let $(X,T)$ be defined by \eqref{b:eq:system}. For $m\geq0$, let $\mu_m$
	be the periodic measure supported on $Z_m$. In particular,
	$
	\mu_0=\delta_{(\ast,0)}.
	$
	Every $\mu\in\M(X,T)$ has a unique representation
	\begin{equation}\label{b:eq:measure-simplex}
		\mu=\sum_{m=0}^{\infty}w_m\mu_m,
		\qquad
		w_m\geq0,
		\qquad
		\sum_{m=0}^{\infty}w_m=1.
	\end{equation}
	Moreover, $	\E(X,T)=\{\mu_m:m\geq0\},$ $\mu_{m}\to\mu_{0}.$
\end{proposition}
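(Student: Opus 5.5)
The plan is to show first that every invariant measure lives on $Y\times\{0\}$, and then to decompose it along the disjoint closed invariant pieces $Z_m$.

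\emph{Step 1: support.} Let $\mu\in\M(X,T)$ and let $\pi_2(u,t)=t$. By invariance, $\int \pi_2\,d\mu=\int \pi_2\circ T^n\,d\mu=\lambda^n\int t\,d\mu(u,t)$ for every $n$. Since the integrand is nonnegative and $\lambda<1$, letting $n\to\infty$ gives $\int t\,d\mu=0$. Hence $\mu(Y\times(0,1])=0$, so $\mu$ is carried by $Y\times\{0\}=\bigsqcup_{m\geq0}Z_m$. This is a countable disjoint union of Borel sets, each of which is finite and therefore closed.

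\emph{Step 2: decomposition.} Put $w_m:=\mu(Z_m)$. Then $w_m\geq0$ and $\sum_m w_m=1$ by Step 1. Each $Z_m$ satisfies $T^{-1}(Z_m)\cap (Y\times\{0\})=Z_m$, because $T$ maps $Y\times\{0\}$ to itself, permutes each $Z_m$, and preserves the index $m$. So whenever $w_m>0$, the normalized restriction $\mu|_{Z_m}/w_m$ is an invariant probability measure on the single periodic orbit $Z_m$. The dynamics on $Z_m$ is a cyclic permutation, so invariance forces equal weights on all its points, and the restriction equals $\mu_m$. This gives $\mu=\sum_m w_m\mu_m$. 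For uniqueness, note that the supports $Z_m$ are pairwise disjoint, so any such representation must have $w_m=\mu(Z_m)$.

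\emph{Step 3: ergodic measures and convergence.} Each $\mu_m$ is ergodic, since it is a periodic orbit measure. Conversely, let $\mu$ be ergodic. Each $Z_m$ is an invariant set, so $\mu(Z_m)\in\{0,1\}$, and Step 2 then gives $\mu=\mu_m$ for some $m$. For convergence, take $g\in C(X,\mathbb R)$. Every point $((m,j),0)$ is at distance $a_m=2^{-m}\to0$ from $(\ast,0)$, so uniform continuity gives $\int g\,d\mu_m\to g(\ast,0)=\int g\,d\mu_0$. Hence $\mu_m\to\mu_0$ weak$^*$.

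I expect the only delicate point to be Step 1. We have to justify that $\int t\,d\mu=\lambda^n\int t\,d\mu$; this holds because the coordinate function $\pi_2$ is bounded and continuous and satisfies $\pi_2\circ T^n=\lambda^n\pi_2$. We also have to be careful that $Y\times\{0\}$ is not a finite union, so countable additivity is what yields $\sum w_m=1$.
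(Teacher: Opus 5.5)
Your proposal is correct and follows essentially the same route as the paper: you remove the $t$-coordinate using $\int\pi\,d\mu=\lambda\int\pi\,d\mu$, set $w_m=\mu(Z_m)$, get uniqueness from the disjointness of the $Z_m$ and the ergodic classification from the invariance of each $Z_m$, and obtain $\mu_m\to\mu_0$ from $d_X(((m,j),0),(\ast,0))=a_m\to0$. Your Step~2 also supplies a justification the paper leaves implicit: the normalized restriction $\mu|_{Z_m}/w_m$ is invariant, and since $T$ acts cyclically on $Z_m$ it must equal $\mu_m$.
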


\begin{proof}
	Let $\mu\in\M(X,T)$.  Let $\pi:X\to[0,1]$, $\pi(u,t)=t$, be the  projection map.  Then
	$$
	\int_X\pi\,d\mu
	=\int_X\pi\circ T\,d\mu
	=\lambda\int_X\pi\,d\mu.
	$$
	Hence $\int \pi\,d\mu=0$, and by the non-negativity of $\pi$, $\mu(Y\times \{0\})=1$.    And $Y\times\{0\}$ is the disjoint union of the
	$Q_m\times\{0\}$.  Let 
	$w_m=\mu(Q_m\times\{0\})$, by the $\mu$ is
	supported on $Y\times\{0\}$, this proves
	\eqref{b:eq:measure-simplex}. Since   $Q_m\times\{0\}$ are pairwise disjoint and $\mu_j$ is
	supported on $Q_j\times\{0\}$, this proves uniqueness. Each $\mu_m$ is ergodic since it is the periodic measure on the $Z_m$. Conversely, if $\mu$ is ergodic, then
	$
	w_m=\mu(Z_m)\in\{0,1\}
	$
	for every $m\geq0$. Since
	$
	\sum_{m\geq0}w_m=1,
	$
	there is a unique $m$ such that $w_m=1$. Hence $\mu=\mu_m$.  Finally, each   $(u,v)\in Q_m\times\{0\}$ has $d_{X}((u,v),(\ast,0))=a_m$,  and $a_m\to0$, which finish the proof.
\end{proof}
We next show that the periodic orbits $\{Z_m:m\geq0\}$ form a global
maximizable family. For each $\eta\in\cE_{\rm orb}(X,T)$, an ergodic
$\eta$-maximizing measure is equal to some $\mu_m$ which is supported on
$Z_m$.

For a closed invariant set $Z\subset X$, define
$
\cE_Z
:=\{\eta\in\cE_{\rm orb}(X,T):
\Max(\eta)\cap\M(Z,T)\neq\varnothing\}.
$ 
A collection $\cZ$ of closed invariant sets is called
$\cE_{\rm orb}$-maximizable if
\begin{equation*}
	\cE_{\rm orb}(X,T)=\bigcup_{Z\in\cZ}\cE_Z.
\end{equation*}
Since $\mu_m\to\mu_0$ and the map $\eta_*$ is continuous, this
leads to the following countable maximizable family.
\begin{theorem}\label{b:thm:max-family}
	The family $\cZ:=\{Z_m:m\geq0\}$ 
	is a countable $\cE_{\rm orb}$-maximizable family.
\end{theorem}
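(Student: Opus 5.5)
The argument only needs the measure structure from Proposition~\ref{b:prop:measures} together with the fact that each $\eta_*$ is affine and weak$^*$ continuous. Fix $\eta\in\cE_{\rm orb}(X,T)$ and a representative $\Phi\in\mathcal{AA}_{\rm orb}(X,T)$. By Lemma~\ref{lem:functional} and Corollary~\ref{cor:orb-quotient}, $\eta_*=\Phi_*$ is continuous and affine on $\M(X,T)$, and the maximum $\beta(\eta)$ is attained. The task is to exhibit, for this fixed $\eta$, an index $m\geq0$ with $\mu_m\in\Max(\eta)$. Since $\mu_m\in\M(Z_m,T)$, this gives $\eta\in\cE_{Z_m}$, and as $\eta$ is arbitrary, $\cE_{\rm orb}(X,T)=\bigcup_{m\geq0}\cE_{Z_m}$.

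The first step is to find an ergodic maximizing measure. Since $\Max(\eta)$ is a nonempty compact convex subset of $\M(X,T)$ and $\eta_*$ is affine and continuous, $\Max(\eta)$ is a face of $\M(X,T)$. I would take an extreme point $\mu$ of $\Max(\eta)$ by Krein--Milman. Being an extreme point of a face, $\mu$ is extreme in $\M(X,T)$, so $\mu$ is ergodic. Proposition~\ref{b:prop:measures} gives $\E(X,T)=\{\mu_m:m\geq0\}$, so $\mu=\mu_m$ for some $m$, and we are done.

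There is also a more hands-on route that uses the countable decomposition directly, so I would include it as an alternative or check. Write any maximizer as $\mu=\sum_m w_m\mu_m$ as in \eqref{b:eq:measure-simplex}. The partial sums converge to $\mu$ weak$^*$: the tail has total mass $\sum_{m>M}w_m\to0$, so the normalized partial sums converge in total variation, hence weak$^*$. Continuity and affinity of $\eta_*$ then yield
\[
\eta_*(\mu)=\sum_m w_m\,\eta_*(\mu_m).
\]
Because $\eta_*(\mu_m)\leq\beta(\eta)$ for all $m$ and $\eta_*(\mu)=\beta(\eta)$, some $w_m>0$ must have $\eta_*(\mu_m)=\beta(\eta)$.

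The step that needs care is this passage of $\eta_*$ through an infinite convex combination; it is legitimate exactly because $\eta_*$ is weak$^*$ continuous and affine, which Lemma~\ref{lem:functional} supplies. Alternatively, one can note that $\sup_m\eta_*(\mu_m)$ is attained, either at a finite $m$ or in the limit $\mu_m\to\mu_0$, so that value is achieved by some $\mu_m$. Either route gives the countable maximizable family $\{Z_m:m\geq0\}$.
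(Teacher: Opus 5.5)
Your proposal is correct. Your main argument differs from the paper's; your fallback routes essentially reproduce it.

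\textbf{The paper's route.} The paper sets $F_m=\eta_*(\mu_m)$ and uses $\mu_m\to\mu_0$ with weak$^*$ continuity to get $F_m\to F_0$. Hence $\max_m F_m$ is attained at some $m$. It then applies the decomposition of Proposition~\ref{b:prop:measures} and the interchange $\eta_*\bigl(\sum_n w_n\mu_n\bigr)=\sum_n w_nF_n\leq F_m$, justified by the uniform estimate \eqref{eq:uniform-approx}, to conclude $\mu_m\in\Max(\eta)$.

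\textbf{Your Krein--Milman route.} This avoids both the convergence $\mu_m\to\mu_0$ and the infinite convex combination. It uses only two facts: $\Max(\eta)$ is a face of $\M(X,T)$, so its extreme points are ergodic; and $\E(X,T)=\{\mu_m:m\geq0\}$.
\begin{itemize}
\item What it buys: this is the general principle that any family of closed invariant sets carrying every ergodic measure is maximizable, for every continuous affine functional. It is also the heuristic the paper itself states just before the theorem.
\item What the paper's route buys instead: it is more elementary and explicit. It identifies a maximizing index as any $m$ maximizing $F_m$, and it gives $\beta(\eta)=\max_m\eta_*(\mu_m)$ directly.
\end{itemize}

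\textbf{Your second route.} This sits between the two. Starting from an actual maximizer, which exists by compactness, sidesteps the question of whether $\sup_m F_m$ is attained. Your total-variation justification of $\eta_*(\mu)=\sum_m w_m\eta_*(\mu_m)$ replaces the paper's appeal to \eqref{eq:uniform-approx}. This is the same argument the paper uses later in Lemma~\ref{b:lem:common-interior}.

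\textbf{Your closing remark.} Attaining $\sup_m\eta_*(\mu_m)$, either at a finite index or via $\mu_m\to\mu_0$, is the first half of the paper's proof.
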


\begin{proof}
	Fix $\eta=[\Phi]\in\cE_{\rm orb}(X,T)$, where
	$
	\Phi=\{\phi_k\}_{k\geq1}.
	$. For $m\geq0$, set 
	$$F_m=\eta_*(\mu_m)
	=\lim_{k\to\infty}\frac1k\int_X\phi_k\,d\mu_m.$$   Lemma~\ref{lem:functional} and
	$\mu_m\to\mu_0$ imply $F_m=\eta_*(\mu_m)\longrightarrow F_0=\eta_*(\mu_0).$  
	Hence the set $\{F_m:m\geq0\}$ has a largest element.  Choose $m$ such that
	$F_m=\max_nF_n$.  By the Proposition \ref{b:prop:measures} and $\eta_*$ is continuous and affine,
	\begin{equation*}
		\eta_*\bigg(\sum_{n=0}^{\infty}w_n\mu_n\bigg)
		=\sum_{n=0}^{\infty}w_nF_n
		\leq F_m.
	\end{equation*}
	The interchange  is justified by the uniform
	estimate \eqref{eq:uniform-approx}.  Thus $\mu_m\in\Max(\eta)$, so
	$\eta\in\cE_{Z_m}$.  This complete the proof.
\end{proof}

The next two lemmas apply the general results of global TPO to the sets
$\cE_{Z_m}$. Lemma~\ref{b:lem:EZ-closed} follows from
Lemma~\ref{g:lem:closedness-global}. Lemma~\ref{b:lem:common-interior}
follows from Lemma~\ref{g:lem:common-interior-global} and
$
\M(Z_m,T)=\{\mu_m\}.
$

\begin{lemma}\label{b:lem:EZ-closed}
	For every $m\geq0$, the set $\cE_{Z_m}$ is closed in
	$\cE_{\rm orb}(X,T)$.
\end{lemma}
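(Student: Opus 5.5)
The statement is a direct specialization of Lemma~\ref{g:lem:closedness-global}, so I would simply apply that lemma with $\mathscr N:=\M(Z_m,T)$. The only point to verify is that this set is closed in $\M(X,T)$. By Proposition~\ref{b:prop:orbits}, $Z_m$ is a finite periodic orbit, so it is a non-empty closed $T$-invariant subset of $X$. Moreover, $\M(Z_m,T)=\{\mu_m\}$: by Proposition~\ref{b:prop:measures}, every invariant measure has the form $\sum_n w_n\mu_n$, and support in $Z_m$ forces $w_n=0$ for $n\neq m$. A singleton is closed in the weak$^*$ topology. (Alternatively, for any closed invariant $Z$ one can use the portmanteau theorem: $\mu_k\to\mu$ with $\mu_k(Z)=1$ gives $\mu(Z)\geq\limsup\mu_k(Z)=1$.)

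Lemma~\ref{g:lem:closedness-global} then says that $\cE_{\M(Z_m,T)}=\cE_{Z_m}$ is closed in $\cE_{\rm orb}(X,T)$. For completeness I would recall the mechanism behind that lemma. Take $\eta_k\to\eta$ in $\cE_{\rm orb}$ with $\mu_m\in\Max(\eta_k)$. Corollary~\ref{cor:orb-quotient} gives the uniform bound $\sup_\rho|(\eta_k)_*(\rho)-\eta_*(\rho)|\leq\|\eta_k-\eta\|_{\cE_{\rm orb}}$. Passing to the limit in $(\eta_k)_*(\mu_m)\geq(\eta_k)_*(\nu)$ then gives $\mu_m\in\Max(\eta)$. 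Here no compactness argument is needed, since the measure $\mu_m$ is fixed.

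There is no real obstacle. The only care needed is to check the hypotheses correctly, namely that $\M(Z_m,T)$ is closed, which the singleton description above settles.
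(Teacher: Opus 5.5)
Your proposal is correct and follows essentially the paper's route. The paper also remarks that the lemma follows from Lemma~\ref{g:lem:closedness-global}, and its written proof uses $\M(Z_m,T)=\{\mu_m\}$ to express $\cE_{Z_m}$ as $\bigcap_{\nu\in\M(X,T)}F_\nu^{-1}([0,\infty))$ with $F_\nu(\eta)=\eta_*(\mu_m)-\eta_*(\nu)$; each $F_\nu$ is $2$-Lipschitz by the same uniform bound from Corollary~\ref{cor:orb-quotient} that you use. Your sequential limit with the fixed measure $\mu_m$ is just the sequential form of this intersection-of-closed-sets argument.
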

\begin{proof}
	Since
	$
	\M(Z_m,T)=\{\mu_m\},
	$
	we have
	$
	\cE_{Z_m}
	=
	\bigcap_{\nu\in\M(X,T)}
	\left\{
	\eta\in\cE_{\rm orb}:
	\eta_*(\mu_m)-\eta_*(\nu)\geq0
	\right\}.
	$
	For each $\nu\in\M(X,T)$, define
	$
	F_\nu:\cE_{\rm orb}\longrightarrow\mathbb R,
	$, $
	F_\nu(\eta):=\eta_*(\mu_m)-\eta_*(\nu).
	$ 
	For $\eta,\zeta\in\cE_{\rm orb}$,
	\begin{align*}
		|F_\nu(\eta)-F_\nu(\zeta)|
		&\leq
		|(\eta-\zeta)_*(\mu_m)|
		+
		|(\eta-\zeta)_*(\nu)|\\
		&\leq
		2\|\eta-\zeta\|_{\cE_{\rm orb}}.
	\end{align*}
	Thus $F_\nu$ is continuous and
	$
	\cE_{Z_m}
	=
	\bigcap_{\nu\in\M(X,T)}
	F_\nu^{-1}\bigl([0,\infty)\bigr).
	$
	Therefore, $\cE_{Z_m}$ is closed in $\cE_{\rm orb}(X,T)$.
\end{proof}

Each $Z_m$ supports exactly one invariant measure, namely $\mu_m$. Hence an
interior point of $\cE_{Z_m}$ gives stable unique maximization by $\mu_m$, as
stated in the following lemma.
\begin{lemma}\label{b:lem:common-interior}
	If $\eta\in\interior(\cE_{Z_m})$, then there exists an open neighborhood
	$U\subset\cE_{\rm orb}(X,T)$ of $\eta$ such that
	$	\Max(\zeta)=\{\mu_m\}$ for every $\zeta\in U.
	$
	In particular, 
	\begin{align*}
		\Max(\eta)=\{\mu_m\}.
	\end{align*}
\end{lemma}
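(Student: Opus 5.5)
The plan is to combine Lemma~\ref{g:lem:common-interior-global} with the fact that $Z_m$ carries a single invariant measure. Put $\mathscr N:=\M(Z_m,T)$. By Proposition~\ref{b:prop:orbits}, $Z_m$ is a single periodic orbit, so $\mathscr N=\{\mu_m\}$. This set is nonempty, trivially convex, and closed in $\M(X,T)$.

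Lemma~\ref{g:lem:common-interior-global} then gives
\[
\interior(\cE_{Z_m})=\interior(\cE_{\mathscr N})=\interior\bigl(\cE_{\mathscr N}^{\subset}\bigr).
\]
Take $\eta\in\interior(\cE_{Z_m})$ and let $U:=\interior(\cE^{\subset}_{\mathscr N})$, which is an open neighborhood of $\eta$.

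For every $\zeta\in U$ we have $\Max(\zeta)\subset\{\mu_m\}$. Since $\Max(\zeta)$ is nonempty (the maximum is attained by compactness and continuity of $\zeta_*$), this forces $\Max(\zeta)=\{\mu_m\}$. Taking $\zeta=\eta$ gives the final assertion $\Max(\eta)=\{\mu_m\}$.

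No step is a real obstacle; the only check is that the hypotheses of Lemma~\ref{g:lem:common-interior-global} hold. That lemma's separation argument needs $\mathscr N$ closed and convex, which is immediate for a singleton.

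As a sanity check, one can also argue directly. If some $\nu\neq\mu_m$ belonged to $\Max(\eta)$, then adding $t[S_\bullet f]$ with $f\in\Lip(X)$ and $\int f\,d\nu>\int f\,d\mu_m$ would push $\eta$ out of $\cE_{Z_m}$ for arbitrarily small $t>0$. Such an $f$ exists because Lipschitz functions separate distinct Borel probability measures on a compact metric space. This is exactly the mechanism inside the cited lemma, applied at each point of the open set $\interior(\cE_{Z_m})$.
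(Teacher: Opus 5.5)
Your proposal is correct and matches the paper: just before the lemma, the paper notes that it follows from Lemma~\ref{g:lem:common-interior-global} together with $\M(Z_m,T)=\{\mu_m\}$, which is exactly your main argument. The written proof only unpacks the separation step concretely, as in your sanity check: it uses the decomposition $\nu=\sum_n w_n\mu_n$ to find a competitor $\mu_n\in\Max(\eta)$ with $n\neq m$, and then the explicit perturbation $t[S_\bullet D_m]$ with $D_m=\dist(\cdot,Z_m)$ in place of an abstract separating Lipschitz function.
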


\begin{proof}
	We already know that $\mu_m\in\Max(\eta)$.  Suppose that
	$\nu\in\Max(\eta)$ and $\nu\neq\mu_m$.  Write
	$\nu=\sum_nw_n\mu_n$.  Since every $\eta_*(\mu_n)\leq\beta(\eta)$ and the
	weighted average equals $\beta(\eta)$, some $n\neq m$ with $w_n>0$ satisfies
	$\mu_n\in\Max(\eta)$.
	
	Let $D_m(x)=\dist(x,Z_m)$.  This is a Lipschitz function, it vanishes on
	$Z_m$, and it is strictly positive on $Z_n$.  For every $t>0$, the additive
	perturbation $t[S_\bullet D_m]$ raises the energy of $\mu_n$ strictly more
	than that of $\mu_m$.  Consequently,
	$\eta+t[S_\bullet D_m]\notin\cE_{Z_m}$.  Such perturbations converge to zero
	as $t\to0$, contradicting
	$\eta\in\interior(\cE_{Z_m})$.  This complete the proof.
	
	Since $\interior(\cE_{Z_m})$ is open, a sufficiently small ball around
	$\eta$ remains inside this interior.  Applying the first part at every point
	of that ball shows that the same $\mu_m$ remains uniquely maximizing.
\end{proof}
We now apply the global Baire reduction theorem  to the countable maximizable family
$\{Z_m:m\geq0\}$. Since each $Z_m$ supports only the periodic measure
$\mu_m$, Lemma~\ref{b:lem:common-interior} turns the resulting open dense
set into stable unique periodic optimization.
\begin{theorem}\label{b:thm:global-TPO}
	The set
	\begin{equation}\label{b:eq:Pfam}
		\mathcal P 
		:=\bigcup_{m=0}^{\infty}\interior(\cE_{Z_m})
	\end{equation}
	is open and dense in $\cE_{\rm orb}(X,T)$.  Every
	$\eta\in\mathcal P $ has a unique maximizing measure $\mu_m$ for a
	unique $m\geq0$.  Hence $(X,T)$ has global orbit Lipschitz TPO.  The
	maximizing periods occurring in open locking regions are unbounded.
\end{theorem}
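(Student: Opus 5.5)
The plan is to combine Theorem~\ref{g:thm:Baire-global} with Lemma~\ref{b:lem:common-interior}, and then to exhibit, for each $m$, an explicit class lying in $\interior(\cE_{Z_m})$.

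\textbf{Step 1: $\mathcal P$ is open and dense.} By Theorem~\ref{b:thm:max-family}, $\{Z_m:m\geq0\}$ is a countable $\cE_{\rm orb}$-maximizable family. Theorem~\ref{g:thm:Baire-global} therefore applies directly and shows that $\mathcal P$ in \eqref{b:eq:Pfam} is open and dense.

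\textbf{Step 2: each class in $\mathcal P$ has a unique periodic maximizing measure.} If $\eta\in\interior(\cE_{Z_m})$, Lemma~\ref{b:lem:common-interior} gives $\Max(\eta)=\{\mu_m\}$. The index $m$ is unique because the measures $\mu_m$ are pairwise distinct (Proposition~\ref{b:prop:measures}). Indeed, if $\eta\in\interior(\cE_{Z_m})\cap\interior(\cE_{Z_n})$, then $\{\mu_m\}=\Max(\eta)=\{\mu_n\}$, which forces $m=n$. Since every $\mu_m$ is periodic (Proposition~\ref{b:prop:orbits}), $\mathcal P$ is contained in the set of classes with the periodic optimization property. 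Hence $(X,T)$ has global TPO.

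\textbf{Step 3: every $\interior(\cE_{Z_m})$ is nonempty.} This gives unboundedness of the periods, since $Z_m$ has period $m+1$. For fixed $m$, I take $\eta=-[S_\bullet d_{Z_m}]$ with $d_{Z_m}=\dist(\cdot,Z_m)$. This function is Lipschitz, so $\eta\in\cE_{\rm orb}$ by the remark preceding Lemma~\ref{g:lem:scalar-deviation}. We have $\eta_*(\mu_m)=0$ and $\eta_*(\nu)=-\int d_{Z_m}\,d\nu\le 0$, so $\mu_m\in\Max(\eta)$. Applying Theorem~\ref{g:thm:global-locking} to the zero class $0$ (for which $\mu_m$ is maximizing) with $\varepsilon=1$ and $Q=Z_m$ shows that $\mu_m$ is the unique maximizer for every $\eta+\theta$ with $\|\theta\|_{\cE_{\rm orb}}<1/C_{Z_m}$. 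Thus a ball around $\eta$ lies in $\cE_{Z_m}$, so $\eta\in\interior(\cE_{Z_m})$.

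The only potential subtlety, which I expect to be the main (if minor) obstacle, is that $\mu_0$ is an accumulation point of the $\mu_m$. One might worry that the locking constant degenerates for $m=0$. However, Theorem~\ref{thm:intro-global-locking} gives a finite $C_Q$ for each individual periodic orbit, including the fixed point $Z_0$, so no uniformity in $m$ is needed. Each open region $\interior(\cE_{Z_m})$ is nonempty, and its maximizing period is $m+1$, which is unbounded in $m$.
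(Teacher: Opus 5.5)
Your proposal is correct and follows essentially the same route as the paper: Baire category on the countable closed cover $\{\cE_{Z_m}\}$ (you quote Theorem~\ref{g:thm:Baire-global} instead of redoing the argument via Lemma~\ref{b:lem:EZ-closed}), then Lemma~\ref{b:lem:common-interior} for uniqueness, then a locking ball around $-[S_\bullet D_m]$ to show that each $\interior(\cE_{Z_m})$ is nonempty. The only difference is that you use the general locking Theorem~\ref{g:thm:global-locking}, whose ball radius $1/C_{Z_m}$ depends on the orbit, whereas the paper uses the example-specific domination of Proposition~\ref{b:prop:domination} (constant $1$) through Theorem~\ref{b:thm:locking} to get an explicit ball of radius $1$; either suffices for nonemptiness, and you are right that no uniformity in $m$ is needed.
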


\begin{proof}
	Openness of \eqref{b:eq:Pfam} is immediate.  By
	Theorem~\ref{b:thm:max-family} and Lemma~\ref{b:lem:EZ-closed}, the Banach space
	$\cE_{\rm orb}(X,T)$ is covered by the countable collection of closed sets
	$\cE_{Z_m}$.  The Baire category theorem implies that the union of their
	interiors is dense.  Lemma~\ref{b:lem:common-interior} gives uniqueness and
	locking.  Finally, Section~\ref{b:sec:explicit-locking} below constructs a
	non-empty locking ball corresponding to every $Z_m$.  Since
	$p_m=m+1\to\infty$, the locked maximizing periods are unbounded.
\end{proof}

\label{b:sec:explicit-locking}

Set $a_0:=0$.  For $m,n\geq0$, define
\begin{equation*}
	c_{mn}:=
	\begin{cases}
		0,&m=n,\\
		a_m+a_n,&m\neq n.
	\end{cases}
\end{equation*}
By the metric,
\begin{equation}\label{b:eq:distance-integral}
	\int_XD_m\,d\mu_n=c_{mn}.
\end{equation}
The following proposition is the form of the Theorem~\ref{thm:intro-global-locking}.
The special form of the metric and the description of the invariant
measures allow us to replace the constant $C_Q$ in the general estimate by
$1$. 
\begin{proposition}\label{b:prop:domination}
	For every $\theta\in\cE_{\rm orb}(X,T)$, every $m\geq0$, and every
	$\nu\in\M(X,T)$,
	\begin{equation*}
		|\theta_*(\nu)-\theta_*(\mu_m)|
		\leq
		\|\theta\|_{\cE_{\rm orb}}
		\int_XD_m\,d\nu.
	\end{equation*}
\end{proposition}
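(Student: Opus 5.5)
The plan is to reduce the estimate to pairs of periodic measures, using the simplex structure of $\M(X,T)$ from Proposition~\ref{b:prop:measures}, and then to compare two periodic orbits directly through the orbit Lipschitz seminorm. The special metric $d_Y$ makes every point of $Z_n$ lie at the same distance from every point of $Z_m$, for all times. This is what lets the constant $C_Q$ of Theorem~\ref{thm:intro-global-locking} be replaced by $1$.

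\textbf{Step 1 (pairwise bound).} Fix a representative $\Phi=\{\phi_k\}\in\mathcal{AA}_{\rm orb}(X,T)$ of $\theta$, and fix $n\neq m$. For $x\in Z_n$ and $y\in Z_m$ we have $T^jx\in Z_n$ and $T^jy\in Z_m$ for all $j$, so by \eqref{b:eq:star-metric} and \eqref{b:eq:system}
\[
d_X(T^jx,T^jy)=a_n+a_m=c_{mn}
\]
for every $j$, with $a_0=0$. Hence $d_k^{(1)}(x,y)=k\,c_{mn}$, and the orbit Lipschitz seminorm gives
\[
|\phi_k(x)-\phi_k(y)|\leq L_{\rm orb}(\Phi)\,k\,c_{mn}.
\]
Next average this over $x$ with respect to $\mu_n$ and over $y$ with respect to $\mu_m$, that is, integrate against $\mu_n\otimes\mu_m$. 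This yields
\[
\Bigl|\tfrac1k\int\phi_k\,d\mu_n-\tfrac1k\int\phi_k\,d\mu_m\Bigr|\leq L_{\rm orb}(\Phi)\,c_{mn}.
\]
Letting $k\to\infty$ and using Lemma~\ref{lem:functional} gives
\[
|\Phi_*(\mu_n)-\Phi_*(\mu_m)|\leq L_{\rm orb}(\Phi)\,c_{mn}\leq\|\Phi\|_{\mathcal{AA}_{\rm orb}}\,c_{mn}.
\]
The case $n=m$ is trivial.

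\textbf{Step 2 (general $\nu$).} Write $\nu=\sum_n w_n\mu_n$ as in \eqref{b:eq:measure-simplex}. Since $\Phi_*$ is affine and continuous, and the interchange of sum and limit is justified by the uniform estimate \eqref{eq:uniform-approx} exactly as in the proof of Theorem~\ref{b:thm:max-family}, we get
\[
\Phi_*(\nu)-\Phi_*(\mu_m)=\sum_n w_n\bigl(\Phi_*(\mu_n)-\Phi_*(\mu_m)\bigr).
\]
By Step~1 its absolute value is at most $\|\Phi\|_{\mathcal{AA}_{\rm orb}}\sum_n w_n c_{mn}$. This sum equals $\int D_m\,d\nu$ by \eqref{b:eq:distance-integral}, because $D_m$ is a bounded continuous function and $\nu$ is the countable convex combination of the $\mu_n$.

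\textbf{Step 3 (passing to the class).} As in the last paragraph of the proof of Theorem~\ref{thm:intro-global-locking}, $\theta_*(\mu)=\Phi_*(\mu)$ for every representative $\Phi$ of $\theta$. Taking the infimum of $\|\Phi\|_{\mathcal{AA}_{\rm orb}}$ over representatives then gives the stated inequality with $\|\theta\|_{\cE_{\rm orb}}$.

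The only point needing care is the verification of \eqref{b:eq:distance-integral}, namely that $\dist(x,Z_m)=a_n+a_m$ for $x\in Z_n$ with $n\neq m$. This holds because all points of $Z_m$ are equidistant from $x$, since $d_Y(u,v)=r(u)+r(v)$. Once that is in place, the argument is a direct computation and I expect no real obstacle.
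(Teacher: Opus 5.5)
Your proposal is correct and follows essentially the same route as the paper: a pairwise comparison of $\mu_n$ and $\mu_m$ through the orbit Lipschitz seminorm integrated against the product coupling $\mu_n\otimes\mu_m$, followed by the decomposition $\nu=\sum_n w_n\mu_n$ from Proposition~\ref{b:prop:measures}. The differences are minor. You take the infimum over representatives at the end rather than after the pairwise step. You also justify the countable affinity $\Phi_*(\nu)=\sum_n w_n\Phi_*(\mu_n)$ explicitly via the uniform estimate \eqref{eq:uniform-approx}, which the paper's appeal to affinity alone leaves implicit.
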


\begin{proof}
	First let $n\neq m$.  If $\Gamma=(\gamma_N)$ represents $\theta$, integrate
	the orbit Lipschitz inequality against the invariant product joining
	$\mu_n\times\mu_m$.  Since for any $x\in Z_{n}$, $y\in Z_{m}$ and $j\geq0$, $$
	d_X(T^jx,T^jy)=d_X(x,y)=a_n+a_m=c_{mn}.
	$$
	Therefore, 
	\begin{align*}
		\left|\frac1N\int\gamma_N\,d\mu_n
		-\frac1N\int\gamma_N\,d\mu_m\right|
		&\leq
		\frac{L_{\rm orb}(\Gamma)}N
		\sum_{j=0}^{N-1}
		\int d_X(T^jx,T^jy)\,d(\mu_n\times\mu_m)\\
		&=\frac{L_{\rm orb}(\Gamma)}N\sum_{j=0}^{N-1}\iint d_X(T^jx,T^jy)\,d\mu_{n}(x)\,d\mu_{m}(y)\\
		&=L_{\rm orb}(\Gamma)c_{mn}.
	\end{align*}
	Letting $N\to\infty$ and then taking the infimum over all representatives
	gives
	\begin{equation*}
		|\theta_*(\mu_n)-\theta_*(\mu_m)|
		\leq\|\theta\|_{\cE_{\rm orb}}c_{mn}.
	\end{equation*}
	The assertion is trivial when $n=m$. For any $\nu\in\mathcal{M}(X,T)$, by Proposition \ref{b:prop:measures}, there exists a unique sequence of $(w_n)_{n\geq 0}$ such that 
	$\nu=\sum_nw_n\mu_n$. Since $\theta_*$ is affine on $\mathcal{M}(X,T)$, we have 
	$$\theta_*(\nu)=\sum_nw_n\theta_*(\mu_n).$$
	Therefore,
	\begin{align*}
		|\theta_*(\nu)-\theta_*(\mu_{m})|=\bigg|\sum_nw_n(\theta_*(\mu_n)-\theta_*(\mu_m))\bigg|&\leq\sum_nw_n\left|\theta_*(\mu_n)-\theta_*(\mu_m)\right|\\
		&=\|\theta\|_{\cE_{\rm orb}}
		\int_XD_m\,d\nu.
	\end{align*}
\end{proof}
The following theorem is the form of the global orbit Lipschitz locking
theorem, Theorem~\ref{g:thm:global-locking}, for the present system.
Proposition~\ref{b:prop:domination} gives the domination constant $1$, so
the distance penalty produces an explicit locking ball of radius
$\varepsilon$.
\begin{theorem}\label{b:thm:locking}
	Suppose $\mu_m\in\Max(\eta)$ and let $\varepsilon>0$.  Define $\eta_{m,\varepsilon}
	:=\eta-\varepsilon[S_\bullet D_m].$
	If $\|\theta\|_{\cE_{\rm orb}}<\varepsilon$, then
	\begin{equation*}
		\Max(\eta_{m,\varepsilon}+\theta)=\{\mu_m\}.
	\end{equation*}
	In particular,
	\begin{equation*}
		B_{\cE_{\rm orb}}(\eta_{m,\varepsilon},\varepsilon)
		\subset\interior(\cE_{Z_m}).
	\end{equation*}
\end{theorem}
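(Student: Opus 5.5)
The argument mirrors the proof of Theorem~\ref{g:thm:global-locking}, with the general domination constant $C_Q$ replaced by the constant $1$ from Proposition~\ref{b:prop:domination}.

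Fix $\nu\in\M(X,T)$ and $\theta$ with $\|\theta\|_{\cE_{\rm orb}}<\varepsilon$. The first step is to record the action of the additive penalty. The class $[S_\bullet D_m]$ lies in $\cE_{\rm orb}(X,T)$, since $D_m$ is $1$-Lipschitz, has zero almost additivity bound and is additive. Its functional is $[S_\bullet D_m]_*(\rho)=\int D_m\,d\rho$, and $D_m$ vanishes on $Z_m$, so
\begin{equation*}
(\eta_{m,\varepsilon}+\theta)_*(\nu)-(\eta_{m,\varepsilon}+\theta)_*(\mu_m)
=\bigl(\eta_*(\nu)-\eta_*(\mu_m)\bigr)-\varepsilon\int D_m\,d\nu+\bigl(\theta_*(\nu)-\theta_*(\mu_m)\bigr).
\end{equation*}
The first bracket is $\leq0$ because $\mu_m\in\Max(\eta)$. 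By Proposition~\ref{b:prop:domination}, the last bracket is at most $\|\theta\|_{\cE_{\rm orb}}\int D_m\,d\nu$. Hence the difference is bounded by $-(\varepsilon-\|\theta\|_{\cE_{\rm orb}})\int D_m\,d\nu\leq0$. This shows $\mu_m\in\Max(\eta_{m,\varepsilon}+\theta)$.

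For uniqueness, suppose $\nu$ is also maximizing. Then the difference above is $0$, which forces $\int D_m\,d\nu=0$. Write $\nu=\sum_n w_n\mu_n$ using Proposition~\ref{b:prop:measures}. By \eqref{b:eq:distance-integral}, $\int D_m\,d\nu=\sum_{n\neq m}w_n c_{mn}$. Every $c_{mn}$ with $n\neq m$ is strictly positive: for $m\geq1$, $c_{mn}\geq a_m>0$, and for $m=0$, $c_{0n}=a_n>0$ when $n\geq1$. Therefore $w_n=0$ for all $n\neq m$, so $\nu=\mu_m$. This gives $\Max(\eta_{m,\varepsilon}+\theta)=\{\mu_m\}$.

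For the inclusion, note that the open ball $B_{\cE_{\rm orb}}(\eta_{m,\varepsilon},\varepsilon)$ is exactly the set of points $\eta_{m,\varepsilon}+\theta$ with $\|\theta\|_{\cE_{\rm orb}}<\varepsilon$. Each of these has $\mu_m$ as a maximizing measure, so the ball lies in $\cE_{Z_m}$. Since the ball is open, it lies in $\interior(\cE_{Z_m})$.

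The only point needing care is the strict positivity of $c_{mn}$ in the boundary case $m=0$, and the convention $a_0=0$ together with the metric \eqref{b:eq:star-metric} handles it. Everything else is direct.
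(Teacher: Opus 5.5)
Your proposal is correct and follows the paper's proof essentially line by line. You use the same three-term decomposition of $(\eta_{m,\varepsilon}+\theta)_*(\nu)-(\eta_{m,\varepsilon}+\theta)_*(\mu_m)$, the same domination estimate with constant $1$ from Proposition~\ref{b:prop:domination}, and the same identification of $\int D_m\,d\nu=0$ with $\nu=\mu_m$ via Proposition~\ref{b:prop:measures} and \eqref{b:eq:distance-integral}. You additionally spell out the strict positivity of $c_{mn}$ (including the case $m=0$) and the ball inclusion, both of which the paper leaves implicit.
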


\begin{proof}
	For any $\nu\in\M(X,T)$, Since $\mu_m\in\Max(\eta)$  and
	Proposition~\ref{b:prop:domination} give
	\begin{align*}
		(\eta_{m,\varepsilon}+\theta)_*(\nu)
		-(\eta_{m,\varepsilon}+\theta)_*(\mu_m)
		&=(\eta_{*}(\nu)-\eta_{*}(\mu_{m}))-\varepsilon\int_{X} D_{m}\,d\nu+\theta_{*}(\nu)-\theta_{*}(\mu_{m})\\
		&\leq
		-\varepsilon\int_XD_m\,d\nu
		+\|\theta\|_{\cE_{\rm orb}}\int_XD_m\,d\nu\\
		&=
		-(\varepsilon-\|\theta\|_{\cE_{\rm orb}})
		\int_XD_m\,d\nu\\
		&\leq0.
	\end{align*}
	The last expression is strictly negative unless
	$\int D_m\,d\nu=0$.  By Proposition~\ref{b:prop:measures} and
	\eqref{b:eq:distance-integral}, the latter equality holds exactly when
	$\nu=\mu_m$.  This complete the proof.
\end{proof}

\begin{corollary}\label{b:cor:direct-density}
	The set of almost additive sequences possessing a locked unique periodic maximizing measure
	is dense in $\cE_{\rm orb}(X,T)$.
\end{corollary}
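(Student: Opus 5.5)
The plan is to prove the corollary directly from Theorem~\ref{b:thm:max-family} and Theorem~\ref{b:thm:locking}, without going through the Baire argument.

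Fix $\eta\in\cE_{\rm orb}(X,T)$ and $\delta>0$. By Theorem~\ref{b:thm:max-family}, there is some $m\geq0$ with $\mu_m\in\Max(\eta)$. We may in fact take any $m$ attaining $\max_n\eta_*(\mu_n)$. For $\varepsilon>0$, set
\[
\eta_{m,\varepsilon}:=\eta-\varepsilon[S_\bullet D_m],
\qquad D_m=\dist(\cdot,Z_m).
\]

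We first check that this perturbation is small and lies in the space. The function $D_m$ is $1$-Lipschitz and bounded by $\operatorname{diam}X$. Hence $S_\bullet D_m\in\mathcal{AA}_{\rm orb}(X,T)$ with $D(S_\bullet D_m)=0$ and $L_{\rm orb}(S_\bullet D_m)\leq1$. Its $\|\cdot\|_{\mathcal A}$-norm is at most $\|D_m\|_\infty$. So
\[
\|\eta_{m,\varepsilon}-\eta\|_{\cE_{\rm orb}}\leq\varepsilon\,(\|D_m\|_\infty+1),
\]
and choosing $\varepsilon$ small makes this less than $\delta$.

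Next, Theorem~\ref{b:thm:locking} gives
\[
\Max(\eta_{m,\varepsilon}+\theta)=\{\mu_m\}\quad\text{whenever }\|\theta\|_{\cE_{\rm orb}}<\varepsilon .
\]
So $\eta_{m,\varepsilon}$ has the periodic measure $\mu_m$ as a locked unique maximizing measure, and it lies within $\delta$ of $\eta$. This proves density.

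To phrase the result at the level of sequences rather than classes, pick any representative $\Phi$ of $\eta$. Then $\Phi-\varepsilon S_\bullet D_m$ represents $\eta_{m,\varepsilon}$. It is orbit Lipschitz and almost additive, so it is one of the required sequences.

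I expect no real obstacle. The only points to verify are the norm bound on $[S_\bullet D_m]$ and that Theorem~\ref{b:thm:locking} applies for every $m$, including $m=0$. Both are routine.
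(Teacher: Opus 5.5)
Your proof is correct and is essentially the paper's own argument. You choose $m$ with $\mu_m\in\Max(\eta)$ via Theorem~\ref{b:thm:max-family}, perturb to $\eta-\varepsilon[S_\bullet D_m]$ at norm cost at most $\varepsilon(\|D_m\|_\infty+\Lip(D_m))$, and invoke Theorem~\ref{b:thm:locking} to get the locking ball of radius $\varepsilon$. Your closing remark about the representative $\Phi-\varepsilon S_\bullet D_m$ is a harmless clarification that the paper leaves implicit.
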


\begin{proof}
	Given $\eta\in\cE_{\rm orb}$, Theorem~\ref{b:thm:max-family} provides $m$ with
	$\mu_m\in\Max(\eta)$. Define $\eta_{m,\varepsilon}
	:=\eta-\varepsilon[S_\bullet D_m]$. Since $D_m$ is Lipschitz on $X$,
	$$
	\|\eta_{m,\varepsilon}-\eta\|_{\cE_{\rm orb}}
	\leq
	\varepsilon\bigl(\|D_m\|_\infty+\Lip(D_m)\bigr)
	\rightarrow0(\varepsilon\to0).
	$$
	Theorem~\ref{b:thm:locking} places every $\eta_{m,\varepsilon}$ in an open
	ball.  This gives a constructive proof of density, in addition to the
	Baire proof in Theorem~\ref{b:thm:global-TPO}.
\end{proof}
We end this section with two further properties of the example. The space
$\cE_{\rm orb}(X,T)$ is infinite-dimensional, and the system admits orbit
Lipschitz almost additive sequences that are not additive.

\begin{proposition}
	\label{b:prop:infinite-dimensional}
	The Banach space $\cE_{\rm orb}(X,T)$ is infinite-dimensional.
\end{proposition}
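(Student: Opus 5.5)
The plan is to build, for every $N\geq1$, $N$ classes in $\cE_{\rm orb}(X,T)$ that are linearly independent. These will come from additive Lipschitz sequences $S_\bullet f_m$, and independence will be detected through the continuous linear evaluation functionals $\eta\mapsto\eta_*(\mu_m)$. By Corollary~\ref{cor:orb-quotient}, $\eta\mapsto\eta_*(\mu)$ is well defined on the quotient and bounded, with $|\eta_*(\mu)|\leq\|\eta\|_{\cE_{\rm orb}}$. It is also linear, since $(\Phi+\Psi)_*=\Phi_*+\Psi_*$. So the linear map
$$
\Lambda_N:\cE_{\rm orb}(X,T)\to\R^{N},\qquad \Lambda_N(\eta):=\bigl(\eta_*(\mu_0),\ldots,\eta_*(\mu_{N-1})\bigr),
$$
is well defined. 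If $\Lambda_N$ is surjective for every $N$, then $\dim\cE_{\rm orb}(X,T)\geq N$ for all $N$, and we are done.

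For surjectivity I would take Lipschitz functions that separate the orbits $Z_m$. A natural choice is the indicator-like function
$$
f_m(u,t):=\max\{0,\,a_m/2-d_X((u,t),Z_m)\}
$$
for $m\geq1$, and $f_0:=D_0=\dist(\cdot,Z_0)$ (or simply the constant $1$). Each $f_m$ is Lipschitz with constant $1$ and bounded. Hence $S_\bullet f_m\in\mathcal{AA}_{\rm orb}(X,T)$, with $D=0$, $L_{\rm orb}\leq1$, and $\|S_\bullet f_m\|_{\cA}\leq\|f_m\|_\infty$, exactly as noted for $S_\bullet d_Q$ before Lemma~\ref{g:lem:scalar-deviation}.

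The key computation is that $[S_\bullet f]_*(\mu_n)=\int f\,d\mu_n$. Using the metric \eqref{b:eq:star-metric}, distinct points of $Y\times\{0\}$ lying in different $Q$'s are at distance $a_m+a_n>a_m/2$ from $Z_m$. Hence $f_m$ equals $a_m/2$ on $Z_m$ and vanishes on $Z_n$ for every $n\neq m$. The vectors $\Lambda_N([S_\bullet f_m])$ for $1\leq m\leq N-1$ are therefore multiples of the standard basis vectors $e_m$. The constant function $1$ gives $\Lambda_N([S_\bullet 1])=(1,\ldots,1)$. Together these span $\R^N$.

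The only point needing care is the check that distance is large for $Z_n$ with $n<m$ (where $a_n>a_m$) and also for $Z_0$ (distance $a_m$). Both exceed $a_m/2$, so there is no real obstacle; this separation is the step I would verify first. The conclusion is that the classes $[S_\bullet 1],[S_\bullet f_1],\ldots,[S_\bullet f_{N-1}]$ are linearly independent for every $N$, so $\cE_{\rm orb}(X,T)$ is infinite-dimensional.
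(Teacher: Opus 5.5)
Your proof is correct, but it detects independence by a different mechanism from the paper's.

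The paper uses the $T$-invariant Lipschitz functions $h_m(u,t)=\mathbf 1_{Q_m}(u)$, $m\geq1$, for which $S_nh_m=nh_m$. A relation $\sum_m c_m[S_\bullet h_m]=0$ then says that $n\sum_m c_mh_m$ is uniformly sublinear. This forces $\sum_m c_mh_m\equiv0$, and evaluation gives $c_m=0$. No invariant measures or dual functionals are involved, but the argument relies on the skew-product structure, which makes the test functions exactly invariant on all of $X$.

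You instead use the bounded linear functionals $\eta\mapsto\eta_*(\mu_m)$ from Corollary~\ref{cor:orb-quotient} and show that $\Lambda_N$ is onto $\R^N$. This needs no invariant test functions, only Lipschitz functions separating finitely many ergodic measures. Distinct ergodic measures are linearly independent and $\Lip(X)$ is uniformly dense in $C(X,\R)$, so the same argument gives $\dim\cE_{\rm orb}(X,T)\geq N$ for any system with $N$ distinct ergodic measures. It also shows that the range of $\eta\mapsto(\eta_*(\mu_m))_{m\geq0}$ is infinite-dimensional, i.e.\ the data relevant to optimization already span an infinite-dimensional space, not just the quotient norm.

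One slip to delete: the alternative $f_0:=D_0=\dist(\cdot,Z_0)$ would not work. Since $D_0$ vanishes on $Z_0$, $\Lambda_N([S_\bullet D_0])=(0,a_1,\ldots,a_{N-1})$ has zero $\mu_0$-coordinate, like every $\Lambda_N([S_\bullet f_m])$ with $m\geq1$, so that family cannot span $\R^N$. You need the constant function $1$ (or any Lipschitz function with nonzero $\mu_0$-integral), which is what you actually use in the final count.
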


\begin{proof}
	For $m\geq1$, define
	$$
	h_m(u,t):=\mathbf 1_{Q_m}(u).
	$$
	The set $Q_m$ has distance $a_m>0$ from its complement, so $h_m$ is
	Lipschitz.  Moreover, $h_m\circ T=h_m$ and hence
	$S_nh_m=nh_m$.  Suppose a finite linear combination satisfies
	$$
	\sum_{m=1}^M c_m[S_\bullet h_m]=0
	\quad\text{in }\cE_{\rm orb}.
	$$
	Then $S_n(\sum_mc_mh_m)=n\sum_mc_mh_m$ is uniformly sublinear, which forces
	$\sum_mc_mh_m=0$.  Evaluation on $Q_m\times[0,1]$ gives $c_m=0$ for every
	$m$.  Thus the displayed additive classes are linearly independent.
\end{proof}

We finish with literal non-additive representatives.  Let $ H(u,t):=t$.

\begin{proposition}
	\label{b:prop:nonadditive}
	The sequence $\Phi^{(m)}=\{\phi_n^{(m)}\}_{n\geq1}$  defined by
	\begin{equation*}
		\phi_n^{(m)}:=-S_nD_m+(-1)^nH,\qquad n\geq1,
	\end{equation*}
	is almost additive and
	orbit Lipschitz, but it is not additive.  Its quotient class satisfies:
	\begin{enumerate}
		\item \label{b:eq:explicit-class} $[\Phi^{(m)}]=-[S_\bullet D_m]$;
		\item \label{b:eq:explicit-Max} $\Max([\Phi^{(m)}])=\{\mu_m\}$; 
		\item \label{b:eq:explicit-unit-ball} 
		$B_{\cE_{\rm orb}}([\Phi^{(m)}],1)
		\subset
		\{\eta:\Max(\eta)=\{\mu_m\}\}$.
	\end{enumerate}
\end{proposition}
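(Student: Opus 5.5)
The plan is to treat $\Phi^{(m)}$ as the additive sequence $-S_\bullet D_m$ plus the bounded, uniformly sublinear error $E=\{(-1)^nH\}_{n\geq1}$. Every required property then follows from estimates on $E$ together with Theorem~\ref{b:thm:locking}.

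\textbf{Almost additivity and the class.} Since $S_\bullet D_m$ is exactly additive, it contributes nothing to the defect. For $E$ we get
$$
(-1)^{n+m}H-(-1)^nH-(-1)^mH\circ T^n,
$$
which is bounded in sup norm by $3$ because $0\le H\le 1$. Hence $D(\Phi^{(m)})\le 3$. Also $\|(-1)^nH\|_\infty/n\le 1/n$, so $E\in\mathcal N$. The sequence $E$ is asymptotically additive with $g=0$, and $S_\bullet D_m$ is asymptotically additive as well, so $\Phi^{(m)}\in\mathcal A(X,T)$.

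\textbf{Orbit Lipschitz bound.} $D_m$ is $1$-Lipschitz, since it is a distance function, so $L_{\rm orb}(S_\bullet D_m)\le1$. For $E$,
$$
|H(x)-H(y)|\le d_X(x,y)\le d_n^{(1)}(x,y),
$$
which gives $L_{\rm orb}(E)\le1$. Thus $\Phi^{(m)}$ and $E$ both lie in $\mathcal{AA}_{\rm orb}$, and $E\in\mathcal{AA}_{\rm orb}\cap\mathcal N$. This yields $[\Phi^{(m)}]=-[S_\bullet D_m]$, which is item~1.

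\textbf{Non-additivity.} I would test the additivity relation at $n=m=1$ on a point $x=(u,t)$ with $t>0$. Additivity would require
$$
\phi_2^{(m)}(x)=\phi_1^{(m)}(x)+\phi_1^{(m)}(Tx).
$$
The $S_\bullet D_m$ parts cancel, leaving $H(x)=-H(x)-H(Tx)$, i.e. $t=-t-\lambda t$. This is false for $t>0$, so $\Phi^{(m)}$ is not additive.

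\textbf{Maximizing measure and locking ball.} For items~2 and~3, apply Theorem~\ref{b:thm:locking} with $\eta=0$ and $\varepsilon=1$. Every measure maximizes the zero class, so $\mu_m\in\Max(0)$. Then $\eta_{m,1}=-[S_\bullet D_m]=[\Phi^{(m)}]$, and every $\theta$ with $\|\theta\|_{\cE_{\rm orb}}<1$ satisfies $\Max([\Phi^{(m)}]+\theta)=\{\mu_m\}$. Taking $\theta=0$ gives item~2, and the general $\theta$ gives item~3. Alternatively, item~2 can be checked directly: $\int D_m\,d\nu=0$ forces $\nu=\mu_m$ by \eqref{b:eq:distance-integral}.

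The only step needing some care is confirming that $E$ really belongs to $\mathcal{AA}_{\rm orb}$, so that it can be discarded in the quotient. Without this, item~1 would fail even though $E$ is sublinear. The check is the explicit bounds above: $D(E)\le3$ and $L_{\rm orb}(E)\le1$.
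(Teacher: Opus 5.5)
Your proposal is correct and follows the paper's proof step for step. It uses the same defect bound $3\|H\|_\infty$, the same test at $n=s=1$ giving $(2+\lambda)t\neq0$, the same sublinearity of $\{(-1)^nH\}$, and the same application of Theorem~\ref{b:thm:locking} with $\eta=0$, $\varepsilon=1$. Your explicit check that $\{(-1)^nH\}$ lies in $\mathcal{AA}_{\rm orb}\cap\mathcal N$, and not merely in $\mathcal N$, fills in a point the paper leaves implicit for item~1; you should also rename the second almost-additivity index (e.g.\ to $s$) so it does not clash with the fixed orbit index $m$.
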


\begin{proof}
	Since $D_m$ and $H$ are
	Lipschitz on $X$, each $\phi_{n}^(m)$ is orbit Lipschitz.  
	$$
	\phi_{n+s}^{(m)}-\phi_n^{(m)}-\phi_s^{(m)}\circ T^n=(-1)^{n+s}H-(-1)^nH-(-1)^sH\circ T^n,
	$$
	whose norm is bounded by $3\|H\|_\infty$, hence $\Phi^{(m)}\in\cAA_{\rm orb}(X,T)$.
	However, taking $n=s=1$ gives
	$\phi_2^{(m)}-\phi_1^{(m)}-\phi_1^{(m)}\circ T=2H+H\circ T$, which is not identically zero since $(2H+H\circ T)(u,t)=(2+\lambda)t$. Therefore $\Phi^{(m)}$ is not additive. Since $||(-1)^{n}H||_{\infty}=||H||_{\infty}\leq1$, therefore
	belongs to $\mathcal N$, proving \eqref{b:eq:explicit-class}.  Finally, applying Theorem \ref{b:thm:locking} with $\eta=0$ and $\varepsilon=1$, we have $\eta_{m,1}=-[S_\bullet D_m]=[\Phi^{(m)}]$. 
	Theorem \ref{b:thm:locking} yields $\Max([\Phi^{(m)}])=\{\mu_m\}$ and $B_{\cE_{\rm orb}}([\Phi^{(m)}],1)
	\subset
	\{\eta:\Max(\eta)=\{\mu_m\}\}$. This completes the proof.
\end{proof}

\section{Relative typical periodic optimization}
This section develops relative TPO on the fixed Lipschitz leaf
$\mathcal L_\Phi(X)$. We establish the leafwise Baire and subsystem
reduction theorems, and use periodic locking to obtain equivalent forms of
relative TPO. We also give a necessary condition, a local transfer result,
and the leafwise structural theorem. Finally, we relate relative TPO to
classical Lipschitz TPO through a Lipschitz realization condition. We first introduce two classes of Lipschitz
perturbations.
\begin{definition}
Let $(X,T)$ be a TDS, and let
$\Phi=\{\phi_n\}_{n\geq1}\subset C(X,\mathbb R)$ be an almost additive
sequence.  For each $f\in\Lip(X)$, $
\Phi^f:=\Phi+S_\bullet f
=\{\phi_n+S_nf\}_{n\geq1}.
$ For any   \(\mathcal N\subset\M(X,T)\), define
\begin{align*}
 \Lip^\Phi_{\mathcal N}(X,T)
 &:=\{f\in\Lip(X):\Max(X,T,\Phi^f)\cap\mathcal N\neq\varnothing\},\\
 \Lip^{\Phi,\subset}_{\mathcal N}(X,T)
 &:=\{f\in\Lip(X):\Max(X,T,\Phi^f)\subset\mathcal N\}.
\end{align*}
For a closed invariant set \(Z\subset X\), abbreviate
$
 \Lip^\Phi_Z=\Lip^\Phi_{\M(Z,T)},
 \Lip^{\Phi,\subset}_Z=\Lip^{\Phi,\subset}_{\M(Z,T)}.
$
\end{definition}

Thus \(f\in\Lip^\Phi_Z\) exactly when \(Z\) is maximizable for \(\Phi^f\).

\begin{remark}
The implication
\(\Lip^\Phi_Z(X,T)\neq\varnothing\Rightarrow Z\neq\varnothing\) is immediate.
In the additive cases, the converse follows
by using a constant potential.  On a fixed non-additive leaf, a constant
perturbation $f$ does not change the maximizing measures of the $\Phi$.
Consequently, non-emptiness of \(Z\) alone does not supply the converse.  
\begin{example}\label{ex 4.3}
	Let \(T\) be the identity on \([0,1]\), let
	\(\Phi=S_\bullet f\) with \(f(x)=\sqrt{x}\), thus $\Phi^g=S_\bullet(f+g)$, and let \(Z=\{0\}\).  For every \(g\in\Lip([0,1])\), one has
	\(f(x)+g(x)-(f(0)+g(0))=f(x)+g(x)-\int(f+g)\,d\delta_{0}\geq\sqrt{x}-L_{g}x>0\) for all sufficiently small \(x>0\); hence \(Z\) is not
	maximizable anywhere on the leaf \(\cL_\Phi(X)\).
\end{example}
\end{remark}
We first establish the closedness of the leafwise maximizing classes. This
property will be used in the leafwise Baire reduction.
\begin{lemma}\label{lem:closedness}
Let $(X,T)$ be a TDS, and let
$\Phi=\{\phi_n\}_{n\geq1}\subset C(X,\mathbb R)$ be an almost additive
sequence. If \(\mathcal N\subset\M(X,T)\) is closed, then
\(\Lip^\Phi_{\mathcal N}(X,T)\) is closed in \(\Lip(X)\).  In particular,
\(\Lip^\Phi_Z\) is closed for every closed invariant \(Z\).
\end{lemma}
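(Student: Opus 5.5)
We follow the argument of Lemma~\ref{g:lem:closedness-global}, now in the Lipschitz topology of the leaf. Let $f_k\to f$ in $\Lip(X)$ with $f_k\in\Lip^\Phi_{\mathcal N}(X,T)$. For each $k$ we pick $\mu_k\in\Max(X,T,\Phi^{f_k})\cap\mathcal N$. By weak$^*$ compactness of $\M(X,T)$ we pass to a subsequence with $\mu_k\to\mu$. Since $\mathcal N$ is closed, $\mu\in\mathcal N$. It remains to show that $\mu\in\Max(X,T,\Phi^f)$.

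The key estimate is the uniform closeness of the functionals. By the formula $(\Phi^g)_*(\rho)=\Phi_*(\rho)+\int g\,d\rho$ on the leaf, for every $\rho\in\M(X,T)$ we have
\begin{equation*}
|(\Phi^{f_k})_*(\rho)-(\Phi^{f})_*(\rho)|=\Bigl|\int(f_k-f)\,d\rho\Bigr|\leq\|f_k-f\|_\infty\leq\|f_k-f\|_{\Lip}.
\end{equation*}
This bound is uniform in $\rho$. The map $(\Phi^f)_*$ is weak$^*$ continuous on $\M(X,T)$, by Lemma~\ref{lem:functional} applied to $\Phi$ together with the continuity of $\rho\mapsto\int f\,d\rho$. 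Hence $(\Phi^{f_k})_*(\mu_k)\to(\Phi^f)_*(\mu)$, by the same triangle-inequality splitting used in Lemma~\ref{g:lem:closedness-global}.

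Now fix $\nu\in\M(X,T)$. Maximality of $\mu_k$ gives $(\Phi^{f_k})_*(\mu_k)\geq(\Phi^{f_k})_*(\nu)$. The right-hand side converges to $(\Phi^f)_*(\nu)$ by the uniform estimate. Letting $k\to\infty$ yields $(\Phi^f)_*(\mu)\geq(\Phi^f)_*(\nu)$ for all $\nu$. Thus $\mu\in\Max(X,T,\Phi^f)\cap\mathcal N$, and so $f\in\Lip^\Phi_{\mathcal N}(X,T)$.

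For the final clause, $\M(Z,T)$ is weak$^*$ closed whenever $Z$ is closed and invariant, because $\{\rho:\rho(Z)=1\}$ is closed by the portmanteau theorem since $Z$ is closed. So $\Lip^\Phi_Z$ is closed. No step is a real obstacle. The only point needing care is that $\Phi^f$ is again almost additive with the same constant $C_\Phi$, so $(\Phi^f)_*$ is well defined and continuous; this holds because adding Birkhoff sums does not change the almost additivity defect.
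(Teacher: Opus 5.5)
Your proof is correct and follows essentially the same route as the paper: choose maximizing measures in $\mathcal N$, extract a weak$^*$ limit lying in $\mathcal N$, and pass to the limit in the maximality inequality using continuity of $\Phi_*$ and uniform convergence $f_k\to f$. Your explicit check that $\M(Z,T)$ is weak$^*$ closed fills in a step the paper leaves implicit for the final clause.
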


\begin{proof}
Let \(f_j\to f\) in \(\Lip(X)\).  For every $j\geq1$, choose
$
\mu_j\in
\Max(X,T,\Phi^{f_j})\cap\mathcal N.
$
Since $\mathcal M(X,T)$ is weak$^*$ compact and $\mathcal N$ is weak$^*$ closed, it follows that
$\mu\in\mathcal N$, we may assume that
\(\mu_j\to\mu\in\mathcal N\).  For any \(\eta\in\M(X,T)\), maximality gives
$$
 \Phi_*(\mu_j)+\int f_j\,d\mu_j
 \geq \Phi_*(\eta)+\int f_j\,d\eta.
$$
Taking $j\to\infty$ and using continuity of \(\Phi_*\) together with the uniform convergence of
\(f_j\).  Then \(\mu\in\Max(X,T,\Phi^f)\).
\end{proof}
Similar, we give the definition of the leafwise maximizable family.
\begin{definition}
A collection \(\cZ\) of closed invariant subsets of \(X\) is
\emph{\(\Phi\)-leafwise maximizable} if, for every \(f\in\Lip(X)\), some
\(Z\in\cZ\) is maximizable for \(\Phi^f\).
\end{definition}
Equivalently,
\begin{equation}\label{eq:leaf-cover}
 \Lip(X)=\bigcup_{Z\in\cZ}\Lip^\Phi_Z(X,T).
\end{equation}
Corresponding to the global Baire reduction in
Theorem~\ref{g:thm:Baire-global}, we establish the following leafwise Baire
reduction on the fixed Lipschitz leaf.
\begin{theorem}\label{lem:baire}
If \(\cZ=\{Z_i\}_{i\geq1}\) is a countable \(\Phi\)-leafwise maximizable
family, then
$$
 \bigcup_{i\geq1}\interior\bigl(\Lip^\Phi_{Z_i}(X,T)\bigr)
$$
is dense in \(\Lip(X)\).
\end{theorem}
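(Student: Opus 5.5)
The plan is to mirror the proof of Theorem~\ref{g:thm:Baire-global}, with the Banach space $\cE_{\rm orb}$ replaced by $(\Lip(X),\|\cdot\|_{\Lip})$. The two ingredients are completeness of the ambient space and closedness of the covering sets.

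First, I record that $\Lip(X)$ with $\|f\|_{\Lip}=\|f\|_\infty+|f|_{\Lip}$ is a Banach space. This is standard: a Cauchy sequence converges uniformly, and the Lipschitz seminorm is lower semicontinuous under pointwise limits. Hence $\Lip(X)$ is a Baire space, and so is every non-empty open subset of it.

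Second, by the definition of a $\Phi$-leafwise maximizable family, the covering identity \eqref{eq:leaf-cover} gives $\Lip(X)=\bigcup_{i\geq1}\Lip^\Phi_{Z_i}(X,T)$. By Lemma~\ref{lem:closedness}, applied with $\mathcal N=\M(Z_i,T)$, each $\Lip^\Phi_{Z_i}(X,T)$ is closed in $\Lip(X)$. The convergence $f_j\to f$ in $\|\cdot\|_{\Lip}$ implies uniform convergence, so that lemma applies directly in the Lipschitz topology.

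Third, I fix a non-empty open set $U\subset\Lip(X)$ and write
$$
U=\bigcup_{i\geq1}\bigl(U\cap\Lip^\Phi_{Z_i}(X,T)\bigr).
$$
Each piece is relatively closed in $U$. The Baire category theorem, applied in the open subset $U$ of a complete metric space, yields an index $i$ such that $U\cap\Lip^\Phi_{Z_i}$ has non-empty relative interior. Since $U$ is open in $\Lip(X)$, this relative interior is open in $\Lip(X)$. It is therefore contained in $\interior(\Lip^\Phi_{Z_i})$, so $U$ meets $\bigcup_i\interior(\Lip^\Phi_{Z_i})$. This shows the union is dense. It is also trivially open, being a union of interiors.

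No step is a genuine obstacle. The only point needing care is that the closedness in Lemma~\ref{lem:closedness} holds in the Lipschitz norm topology, and this follows because that topology is finer than the uniform one.
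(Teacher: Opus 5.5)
Your proposal is correct and follows the paper's proof essentially step by step. You cover $\Lip(X)$ by the sets $\Lip^\Phi_{Z_i}(X,T)$, which are closed by Lemma~\ref{lem:closedness}, and then apply the Baire category theorem inside an arbitrary non-empty open $U\subset\Lip(X)$ to produce some $U\cap\Lip^\Phi_{Z_i}$ with non-empty interior. Your only additions are the explicit checks that $\Lip(X)$ is complete and that Lipschitz-norm convergence implies uniform convergence, both of which the paper uses implicitly.
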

\begin{proof}
	For $i\geq1$, set
	$
	F_i:=\Lip^\Phi_{Z_i}(X,T).
	$
	Since $\mathcal Z=\{Z_i\}_{i\geq1}$ is a countable
	$\Phi$-leafwise maximizable family, \eqref{eq:leaf-cover} gives
	$
	\Lip(X)=\bigcup_{i\geq1}F_i.
	$
	By Lemma~\ref{lem:closedness}, each $F_i$ is closed in $\Lip(X)$.
	
	Let $U\subset\Lip(X)$ be an   non empty open set. Then
	$
	U=\bigcup_{i\geq1}(U\cap F_i),
	$
	where each $U\cap F_i$ is closed in $U$. Since $\Lip(X)$ is a Banach
	space, it is a Baire space. Therefore,
	there exists $i\geq1$ such that $U\cap F_i$ has non empty interior
	relative to $U$. 
	It follows that there exists a non empty open set
	$
	V\subset\interior(F_i),
	$
	and 
	$
	U\cap
	\bigcup_{i\geq1}\interior
	\bigl(\Lip^\Phi_{Z_i}(X,T)\bigr)
	\neq\varnothing.
	$
	Hence the union is dense in $\Lip(X)$.
\end{proof}
The following lemma shows that, when this set is closed and convex, the
interior of this class is equal to the interior of the stronger class.
\begin{lemma}\label{lem:common-interior}
Let $(X,T)$ be a TDS, and let
$\Phi=\{\phi_n\}_{n\geq1}\subset C(X,\mathbb R)$ be an almost additive
sequence. If \(\mathcal N\subset\M(X,T)\) is closed and convex, then
$$
 \interior\bigl(\Lip^\Phi_{\mathcal N}(X,T)\bigr)
 =
 \interior\bigl(\Lip^{\Phi,\subset}_{\mathcal N}(X,T)\bigr).
$$
\end{lemma}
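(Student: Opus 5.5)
The inclusion $\Lip^{\Phi,\subset}_{\mathcal N}(X,T)\subset\Lip^\Phi_{\mathcal N}(X,T)$ holds because $\Max(X,T,\Phi^f)$ is non-empty. It therefore gives $\interior(\Lip^{\Phi,\subset}_{\mathcal N})\subset\interior(\Lip^\Phi_{\mathcal N})$. It remains to prove the reverse inclusion, and I will follow the argument of Lemma~\ref{g:lem:common-interior-global}, now carried out on the fixed leaf. The case $\mathcal N=\varnothing$ is trivial, since then both sets are empty. So assume $\mathcal N\neq\varnothing$.

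Let $f\in\interior(\Lip^\Phi_{\mathcal N})$. Since $\interior(\Lip^\Phi_{\mathcal N})$ is open, every $g$ in some ball $B_{\Lip}(f,r)$ also lies in it. It therefore suffices to show that every $g\in\interior(\Lip^\Phi_{\mathcal N})$ satisfies $\Max(X,T,\Phi^g)\subset\mathcal N$; then the whole ball lies in $\Lip^{\Phi,\subset}_{\mathcal N}$. Argue by contradiction. Suppose $\mu\in\Max(X,T,\Phi^g)\setminus\mathcal N$. The set $\mathcal N$ is weak$^*$ closed and convex in the locally convex space of signed measures with the weak$^*$ topology, and $\{\mu\}$ is compact. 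The strong separation theorem \cite[Corollary~5.59]{AliprantisBorder1999} then gives $h_0\in C(X,\mathbb R)$ and $\delta>0$ with $\int h_0\,d\mu\geq3\delta$ and $\sup_{\nu\in\mathcal N}\int h_0\,d\nu\leq0$, after adding a constant and rescaling. By uniform density of $\Lip(X)$ in $C(X,\mathbb R)$, pick $h\in\Lip(X)$ with $\|h-h_0\|_\infty<\delta$. Then $\int h\,d\mu\geq2\delta$ and $\int h\,d\nu\leq\delta$ for all $\nu\in\mathcal N$.

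Now perturb along the leaf by $g+th$ with $t>0$. For every $\nu\in\mathcal N$,
$$
(\Phi^{g+th})_*(\mu)-(\Phi^{g+th})_*(\nu)
=\bigl[(\Phi^{g})_*(\mu)-(\Phi^{g})_*(\nu)\bigr]+t\Bigl(\int h\,d\mu-\int h\,d\nu\Bigr)\geq t\delta>0.
$$
Here the bracket is nonnegative because $\mu$ is $\Phi^g$-maximizing. Hence no $\nu\in\mathcal N$ can be $\Phi^{g+th}$-maximizing, so $g+th\notin\Lip^\Phi_{\mathcal N}$. But $\|th\|_{\Lip}\to0$ as $t\to0$, which contradicts $g\in\interior(\Lip^\Phi_{\mathcal N})$.

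The only delicate point is the separation step. It requires the weak$^*$ dual of $C(X,\mathbb R)$ to be the space of continuous linear functionals, so that the separating functional is integration against some $h_0\in C(X,\mathbb R)$. The affine identity $(\Phi^{g+th})_*=(\Phi^g)_*+t\int h$ (from the definition of the leaf) means the non-additive part $\Phi$ cancels. No almost-additivity estimate is needed beyond the well-definedness of $\Phi_*$ from Lemma~\ref{lem:functional}.
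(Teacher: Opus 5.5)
Your proposal is correct and follows the same approach as the paper's proof: you separate a maximizing measure $\mu\notin\mathcal N$ from the closed convex set $\mathcal N$ by a continuous function, approximate that function uniformly by a Lipschitz $h$, and perturb along the leaf by $th$ so that $\mu$ beats every measure in $\mathcal N$ by at least $t\delta$, which contradicts interiority. You also spell out two steps the paper leaves implicit: the trivial inclusion, which uses $\Max(X,T,\Phi^f)\neq\varnothing$, and the passage from $\interior(\Lip^\Phi_{\mathcal N})\subset\Lip^{\Phi,\subset}_{\mathcal N}$ to the stated equality of interiors via openness.
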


\begin{proof}
It is only to prove $\interior(\Lip^\Phi_{\mathcal N}(X,T))
\subset
\interior(\Lip^{\Phi,\subset}_{\mathcal N}(X,T))$.  Suppose
\(f\in\interior(\Lip^\Phi_{\mathcal N})\) and that
\(\mu\in\Max(X,T,\Phi^f)\setminus\mathcal N\).  Strong separation Theorem \cite[Corollary~5.59]{AliprantisBorder1999} of  
\(\mu\) from the closed convex set \(\mathcal N\) gives \(\psi_0\in C(X,\mathbb{R})\) and
\(\delta>0\) such that
$
 \int\psi_0\,d\mu\geq 3\delta$, $
 \sup_{\nu\in\mathcal N}\int\psi_0\,d\nu\leq0.
$
Since $\Lip(X)$ is  dense in $C(X,\mathbb{R})$, choose
$\psi\in\Lip(X)$ such that
$
\|\psi-\psi_0\|_\infty<\delta.
$ Then
$
\int\psi\,d\mu\geq2\delta$, $
\sup_{\nu\in\mathcal N}\int\psi\,d\nu\leq\delta.
$
Consequently,
$
\int\psi\,d\mu\geq\int\psi\,d\nu+\delta
$
for every $\nu\in\mathcal N$.
For every \(t>0\) and \(\nu\in\mathcal N\),
$
 (\Phi^{f+t\psi})_*(\mu)
 \geq (\Phi^{f+t\psi})_*(\nu)+t\delta.
$
Thus \(f+t\psi\notin\Lip^\Phi_{\mathcal N}\) for every \(t>0\), contradicting
the fact that \(f\in\interior(\Lip^\Phi_{\mathcal N})\).
\end{proof}
The following theorem transfers a dense maximizing property from an
invariant subsystem to an open set of the whole Lipschitz leaf.
\begin{theorem}\label{thm:subsystem}
Let $(X,T)$ be a TDS, and let
$\Phi=\{\phi_n\}_{n\geq1}\subset C(X,\mathbb R)$ be an almost additive
sequence. Let \(Z\subset X\) be closed and invariant, and   \(U\subset\Lip(X)\) be a
non-empty open set such that \(Z\) is maximizable for \(\Phi^f\) for every
\(f\in U\).  Let \(\mathcal N\subset\M(X,T)\).  If
$
 \Lip^{\Phi|Z}_{\mathcal N\cap\M(Z,T)}(Z,T)
$
is dense in \(\Lip(Z)\), then
$
 \Lip^\Phi_{\mathcal N}(X,T)\cap U
$
is dense in \(U\).
\end{theorem}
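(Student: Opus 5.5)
The plan is to mirror the proof of Theorem~\ref{x:thm:subsystem-reduction-extendable}. The global lifting norm is replaced by the McShane extension, which the introduction says drives the leafwise subsystem reduction.

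Fix $f\in U$ and $\varepsilon>0$ with $B_{\Lip(X)}(f,\varepsilon)\subset U$. Restrict to get $f|_Z\in\Lip(Z)$, with $\|f|_Z\|_{\Lip(Z)}\leq\|f\|_{\Lip(X)}$. By the density hypothesis, choose $g\in\Lip^{\Phi|Z}_{\mathcal N\cap\M(Z,T)}(Z,T)$ with $\|g-f|_Z\|_{\Lip(Z)}<\varepsilon/3$. Put $h_0:=g-f|_Z\in\Lip(Z)$.

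Next extend $h_0$ to $X$ with controlled norm. McShane gives $\tilde h$ on $X$ with $|\tilde h|_{\Lip}=|h_0|_{\Lip}$ and $\tilde h|_Z=h_0$. Truncate to $h:=\max\{-\|h_0\|_\infty,\min\{\tilde h,\|h_0\|_\infty\}\}$. This keeps the Lipschitz constant and the values on $Z$, and gives $\|h\|_\infty=\|h_0\|_\infty$. Hence $\|h\|_{\Lip(X)}=\|h_0\|_{\Lip(Z)}<\varepsilon$, so $f+h\in U$ and $(f+h)|_Z=g$. This norm control is the only step that needs care: the bare McShane extension does not control the sup norm, so the truncation is essential.

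Now check that $f+h$ lies in the target set. By hypothesis, $Z$ is maximizable for $\Phi^{f+h}$ because $f+h\in U$. Lemma~\ref{lem:restriction}(b) then gives $\beta(Z,T,\Phi^{f+h})=\beta(X,T,\Phi^{f+h})$, and part (c) gives $\Max(Z,T,\Phi^{f+h})\subset\Max(X,T,\Phi^{f+h})$. Measures on $Z$ see only restrictions, so $(\Phi^{f+h})_*(\mu)=(\Phi|_Z)_*(\mu)+\int g\,d\mu$ for $\mu\in\M(Z,T)$. Therefore $\Max(Z,T,\Phi^{f+h})=\Max(Z,T,(\Phi|_Z)^g)$.

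Since $g$ lies in the assumed dense set, pick $\mu\in\Max(Z,T,(\Phi|_Z)^g)\cap\mathcal N\cap\M(Z,T)$. Then $\mu\in\Max(X,T,\Phi^{f+h})\cap\mathcal N$, so $f+h\in\Lip^\Phi_{\mathcal N}(X,T)\cap U$. Since $\|h\|_{\Lip(X)}<\varepsilon$ and $\varepsilon$ was arbitrary, $\Lip^\Phi_{\mathcal N}(X,T)\cap U$ is dense in $U$.
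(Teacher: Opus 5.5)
Your proof is correct and follows the paper's argument essentially step by step. You approximate $f|_Z$ by a suitable $g$ on $Z$, extend $g-f|_Z$ via McShane to a small $h\in\Lip(X)$, and use maximizability of $Z$ throughout $U$, together with Lemma~\ref{lem:restriction}, to lift a maximizing measure in $\mathcal N$ from $Z$ to $X$.

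Your truncation makes explicit the sup-norm control that the paper folds into its appeal to McShane. It is convenient rather than essential, as you claim: the bare extension already satisfies $\|\tilde h\|_\infty\le\|h_0\|_\infty+|h_0|_{\Lip}\,\mathrm{diam}(X)$, and this is small once you shrink the approximation on $Z$.
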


\begin{proof}
	Let $f_0\in U$ and $\varepsilon>0$. Since $U$ is open, there exists
	$r>0$ such that
	$
	B_{\Lip}(f_0,r)\subset U. 
	$Choose
	$
	0<\eta<\min\{r,\varepsilon\}.
	$
	By the density of
	$
	\Lip^{\Phi|_Z}_{\mathcal N\cap\M(Z,T)}(Z,T|_Z)
	$
	in $\Lip(Z)$, there exists $g\in\Lip(Z)$ such that
	$\|g-f_0|_Z\|_{\Lip(Z)}<\eta$
	and
	$
	\Max\bigl(Z,T|_Z,(\Phi|_Z)^g\bigr)
	\cap\mathcal N\cap\M(Z,T)\neq\varnothing.
	$
	Choose
	\begin{equation}\label{eq:subsystem-measure}
		\mu\in
		\Max\bigl(Z,T|_Z,(\Phi|_Z)^g\bigr)
		\cap\mathcal N\cap\M(Z,T).
	\end{equation}
	Set
	$
	h_Z:=g-f_0|_Z.
	$
	By the McShane extension theorem,  
	there exists $h\in\Lip(X)$ such that
	$
	h|_Z=h_Z$
	and $\|h\|_{\Lip(X)}\leq\|h_Z\|_{\Lip(Z)}
	<\eta.$ 
	Define 	$\widetilde f:=f_0+h.$ 
	Then
	$\|\widetilde f-f_0\|_{\Lip(X)}
	=\|h\|_{\Lip(X)}<\eta<\varepsilon,$ 
	 hence $\widetilde f\in B_{\Lip}(f_0,r)\subset U.$	And
	$\widetilde f|_Z=f_0|_Z+h_Z	=g.$ 
	Then, 
	\begin{equation}\label{eq:restricted-perturbed-sequence}
		(\Phi^{\widetilde f})|_Z
		=(\Phi|_Z)^g.
	\end{equation}	
	Since $\widetilde f\in U$,  $Z$ is
	maximizable for $\Phi^{\widetilde f}$. Hence, 
	$\beta\bigl(Z,T|_Z,(\Phi^{\widetilde f})|_Z\bigr)=\beta(X,T,\Phi^{\widetilde f}).
	$ 
	By \eqref{eq:subsystem-measure} and
	\eqref{eq:restricted-perturbed-sequence}, 
	$(\Phi^{\widetilde f})_*(\mu)		=\beta\bigl(Z,T|_Z,(\Phi^{\widetilde f})|_Z\bigr)=\beta(X,T,\Phi^{\widetilde f}).$ 
	Thus
	$
	\mu\in\Max(X,T,\Phi^{\widetilde f}).
	$
	Since $\mu\in\mathcal N$, it follows that
	$
	\widetilde f\in\Lip^\Phi_{\mathcal N}(X,T)\cap U.
	$
	Hence $\Lip^\Phi_{\mathcal N}(X,T)\cap U$ is dense in $U$.
\end{proof}

Corresponding to global TPO in $\cE_{\rm orb}$, we now define periodic
optimization and relative TPO on the fixed Lipschitz leaf
$\mathcal L_\Phi(X)$.

\begin{definition}
Let $(X,T)$ be a TDS, \(f\in\Lip(X)\) has the \emph{periodic optimization property
relative to \(\Phi\)} if $$\Max(X,T,\Phi^f)=\{\mu\}$$ for some
\(\mu\in\Per(X,T)\).  It has the \emph{weak periodic optimization property}
if \(\Max(X,T,\Phi^f)\cap\Per(X,T)\neq\varnothing\).

The system \((X,T)\) has \emph{typical periodic optimization relative to
\(\Phi\)}, abbreviated  \emph{relative $\Phi$-TPO}, if an open dense subset  $P\subset\Lip(X)$ such that every $f\in\mathcal P$ has the
periodic optimization property relative to $\Phi$.
\end{definition}
To compare periodic optimization, weak periodic optimization, and locking on
the fixed leaf, we introduce the following subsets of $\Lip(X)$.
\begin{align*}
 \mathcal U_!^\Phi
 &:=\{f\in\Lip(X):\Phi^f\text{ has the periodic optimization property}\},\\
 \mathcal U_{\rm w}^\Phi
 &:=\Lip^\Phi_{\Per(X,T)}(X,T)=\{f\in\Lip(X):\Max(X,T,\Phi^f)\cap\mathcal \Per(X,T)\neq\varnothing\},\\
 \mathcal P^\Phi
 &:=\interior(\mathcal U_!^\Phi),\qquad
 \mathcal P_+^\Phi
 :=\interior(\mathcal U_{\rm w}^\Phi),\qquad
 \mathcal P_-^\Phi
 :=\bigcup_{\mu\in\Per(X,T)}
 \interior\bigl(\Lip^{\Phi,\subset}_{\{\mu\}}(X,T)\bigr).
\end{align*}
Then
\begin{equation}\label{eq:P-inclusions}
 \mathcal P_-^\Phi\subset\mathcal P^\Phi
 \subset\mathcal P_+^\Phi\subset\mathcal U_{\rm w}^\Phi.
\end{equation}

We use the following periodic locking estimate; see
\cite{BochiZhang2015}. Let $Q$ be a periodic orbit and   $\mu_Q$ be its
periodic measure. There exists $C_Q>0$ such that
\begin{equation}\label{eq:periodic-locking}
 \int g\,d\nu-\int g\,d\mu_Q
 \leq C_Q|g|_{\Lip}\int\dist(x,Q)\,d\nu(x)
\end{equation}
for all \(g\in\Lip(X)\) and \(\nu\in\M(X,T)\).  If $T$ is Lipschitz, this
estimate follows by summing over one period; see
\cite[Lemma~6.5]{HuangJenkinsonXuZhang2026}.

Corresponding to the global closure result in
Corollary \ref{g:cor:closure-equality}, periodic locking gives the following
equality of closures on the fixed Lipschitz leaf.
\begin{lemma}\label{lem:closures}
Let $(X,T)$ be a TDS, and let
$\Phi=\{\phi_n\}_{n\geq1}\subset C(X,\mathbb R)$ be an almost additive
sequence. Then
\begin{equation*}
 \cl(\mathcal P_-^\Phi)
 =\cl(\mathcal P^\Phi)
 =\cl(\mathcal P_+^\Phi)
 =\cl(\mathcal U_{\rm w}^\Phi).
\end{equation*}
\end{lemma}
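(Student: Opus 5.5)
By the inclusions \eqref{eq:P-inclusions}, $\mathcal P_-^\Phi\subset\mathcal P^\Phi\subset\mathcal P_+^\Phi\subset\mathcal U_{\rm w}^\Phi$, so it suffices to prove the single inclusion
$$
\mathcal U_{\rm w}^\Phi\subset\cl(\mathcal P_-^\Phi).
$$
The argument mirrors Corollary~\ref{g:cor:closure-equality}. The global locking theorem is replaced by the additive periodic locking estimate \eqref{eq:periodic-locking}, and all perturbations are taken inside $\Lip(X)$.

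Fix $f\in\mathcal U_{\rm w}^\Phi$ and choose a periodic orbit $Q$ with $\mu_Q\in\Max(X,T,\Phi^f)$. Put $d_Q:=\dist(\cdot,Q)\in\Lip(X)$, which satisfies $\|d_Q\|_\infty\le\operatorname{diam}X$ and $|d_Q|_{\Lip}\le1$. For $\varepsilon>0$ set $f_\varepsilon:=f-\varepsilon d_Q$, so that $\|f_\varepsilon-f\|_{\Lip}\le\varepsilon\|d_Q\|_{\Lip}\to0$. The key claim is that
$$
B_{\Lip}\bigl(f_\varepsilon,\varepsilon/C_Q\bigr)\subset\Lip^{\Phi,\subset}_{\{\mu_Q\}}(X,T).
$$
To check it, take $g\in\Lip(X)$ with $\|g\|_{\Lip}<\varepsilon/C_Q$ and any $\nu\in\M(X,T)$. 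Since $(\Phi^{h})_*(\nu)=\Phi_*(\nu)+\int h\,d\nu$, we have
$$
(\Phi^{f_\varepsilon+g})_*(\nu)-(\Phi^{f_\varepsilon+g})_*(\mu_Q)
=\bigl[(\Phi^f)_*(\nu)-(\Phi^f)_*(\mu_Q)\bigr]-\varepsilon\int d_Q\,d\nu+\Bigl[\int g\,d\nu-\int g\,d\mu_Q\Bigr],
$$
using $\int d_Q\,d\mu_Q=0$. The first bracket is $\le0$ by maximality of $\mu_Q$. The last bracket is at most $C_Q|g|_{\Lip}\int d_Q\,d\nu$ by \eqref{eq:periodic-locking}. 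The total is therefore at most
$$
-(\varepsilon-C_Q\|g\|_{\Lip})\int d_Q\,d\nu,
$$
which is strictly negative unless $\int d_Q\,d\nu=0$. If $\int d_Q\,d\nu=0$, then $\supp\nu\subset Q$, which forces $\nu=\mu_Q$. Hence $\Max(X,T,\Phi^{f_\varepsilon+g})=\{\mu_Q\}$. So $f_\varepsilon$ lies in $\interior(\Lip^{\Phi,\subset}_{\{\mu_Q\}})\subset\mathcal P_-^\Phi$, and letting $\varepsilon\to0$ gives $f\in\cl(\mathcal P_-^\Phi)$.

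The only delicate point is the locking estimate \eqref{eq:periodic-locking} itself. As quoted, it is justified via \cite{BochiZhang2015} or, when $T$ is Lipschitz, via \cite[Lemma~6.5]{HuangJenkinsonXuZhang2026}. I will use it exactly as stated, with $C_Q$ independent of $g$ and $\nu$.

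If one wants to avoid any Lipschitz assumption on $T$, there is an alternative. Apply Theorem~\ref{thm:intro-global-locking} to the additive sequence $S_\bullet g$. For this sequence $D=0$, $L_{\rm orb}\le|g|_{\Lip}$ and $\|S_\bullet g\|_{\cA}\le\|g\|_\infty$, so $\|S_\bullet g\|_{\mathcal{AA}_{\rm orb}}\le\|g\|_{\Lip}$. This gives the same bound with $|g|_{\Lip}$ replaced by $\|g\|_{\Lip}$, which suffices for the argument above. This is the route I would take to keep the proof self-contained.

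Note that the non-additive part $\Phi$ enters only through the first bracket, which cancels by maximality. No property of $\Phi$ beyond the existence and affinity of $\Phi_*$ (Lemma~\ref{lem:functional}) is needed.
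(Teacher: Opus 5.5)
Your proposal is correct and follows the paper's proof essentially step for step. You reduce to $\mathcal U_{\rm w}^\Phi\subset\cl(\mathcal P_-^\Phi)$, perturb to $f_\varepsilon=f-\varepsilon\dist(\cdot,Q)$, and combine maximality of $\mu_Q$ with the locking estimate \eqref{eq:periodic-locking} to get a whole ball around $f_\varepsilon$ on which $\mu_Q$ is the unique maximizer; your radius $\varepsilon/C_Q$ in $\|\cdot\|_{\Lip}$, versus the paper's $\varepsilon/(2C_Q)$ in $|\cdot|_{\Lip}$, is immaterial. Your optional derivation of the locking bound from Theorem~\ref{thm:intro-global-locking} applied to $S_\bullet g$ is also valid, and it gives a self-contained justification for merely continuous $T$ (with $\|g\|_{\Lip}$ in place of $|g|_{\Lip}$) instead of relying on the external citation.
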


\begin{proof}
By \eqref{eq:P-inclusions}, it suffices to prove
\(\mathcal U_{\rm w}^\Phi\subset\cl(\mathcal P_-^\Phi)\).  Let
\(f\in\mathcal U_{\rm w}^\Phi\), and   \(\mu_Q\) be a periodic maximizing
measure for \(\Phi^f\).  For \(\varepsilon>0\), put
$
 f_\varepsilon:=f-\varepsilon\dist(\cdot,Q).
$
Since $\mu_Q$ is the only invariant measure supported on \(Q\), $f_{\varepsilon}\in\Lip^{\Phi,\subset}_{\{\mu_{Q}\}}(X,T).$ If \(|g|_{\Lip}<\varepsilon/(2C_Q)\), then for every
\(\nu\in\M(X,T)\), using maximality of \(\mu_Q\) and
\eqref{eq:periodic-locking},
\begin{align*}
 (\Phi^{f_\varepsilon+g})_*(\nu)
 -(\Phi^{f_\varepsilon+g})_*(\mu_Q)
 &\leq -\varepsilon\int\dist(x,Q)\,d\nu
      +\int g\,d\nu-\int g\,d\mu_Q\\
 &\leq-\frac\varepsilon2\int\dist(x,Q)\,d\nu.
\end{align*}
The last inequality  is strictly negative unless \(\nu=\mu_Q\).  Hence $f_{\epsilon}+g\in\Lip^{\Phi,\subset}_{\{\mu_{Q}\}}(X,T)$, by $f_{\varepsilon}\to f$ gives the proof. 
\end{proof}
We also establish the following corollary.
\begin{corollary}\label{cor:tpo-equivalent}
The following are equivalent:
\begin{enumerate}[label=(\alph*)]
 \item \((X,T)\) has \(\Phi\)-TPO;
 \item \(\mathcal P^\Phi\) is dense in \(\Lip(X)\);
 \item \(\mathcal U_!^\Phi\) is dense in \(\Lip(X)\);
 \item \(\mathcal P_+^\Phi\) is dense in \(\Lip(X)\);
 \item \(\mathcal U_{\rm w}^\Phi\) is dense in \(\Lip(X)\);
 \item \(\mathcal P_-^\Phi\) is dense in \(\Lip(X)\).
\end{enumerate}
\end{corollary}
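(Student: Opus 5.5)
The plan follows the global argument of Corollary~\ref{g:cor:TPO-equivalence}, now inside the Banach space $\Lip(X)$, with Lemma~\ref{lem:closures} taking the place of Corollary~\ref{g:cor:closure-equality}. Recall that relative $\Phi$-TPO means there is an open dense set $G\subset\Lip(X)$ contained in $\mathcal U_!^\Phi$. I will establish the chain (a)$\Rightarrow$(b)$\Rightarrow$(c)$\Rightarrow$(e), together with (b)$\Rightarrow$(d)$\Rightarrow$(e). I will then close the loop via (e)$\Rightarrow$(f)$\Rightarrow$(a).

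For (a)$\Rightarrow$(b): if $G$ is open, dense and contained in $\mathcal U_!^\Phi$, then $G\subset\interior(\mathcal U_!^\Phi)=\mathcal P^\Phi$, because the interior is the largest open subset. Hence $\mathcal P^\Phi$ is dense. The implications (b)$\Rightarrow$(c) and (c)$\Rightarrow$(e) are immediate from $\mathcal P^\Phi\subset\mathcal U_!^\Phi\subset\mathcal U_{\rm w}^\Phi$. The implications (b)$\Rightarrow$(d)$\Rightarrow$(e) follow in the same way from the inclusions \eqref{eq:P-inclusions}.

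The substantive step is (e)$\Rightarrow$(f). Here I will invoke Lemma~\ref{lem:closures}: if $\mathcal U_{\rm w}^\Phi$ is dense, then $\cl(\mathcal P_-^\Phi)=\cl(\mathcal U_{\rm w}^\Phi)=\Lip(X)$. The work is entirely in that lemma, namely the locking perturbation $f_\varepsilon=f-\varepsilon\dist(\cdot,Q)$ combined with the estimate \eqref{eq:periodic-locking}. Two points in it need care. First, the neighbourhood used there is controlled by the Lipschitz seminorm of $g$, which is dominated by $\|g\|_{\Lip}$, so it is genuinely open in $\Lip(X)$. Second, $\dist(\cdot,Q)$ is Lipschitz with constant at most $1$, so $f_\varepsilon\to f$ in $\|\cdot\|_{\Lip}$.

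For (f)$\Rightarrow$(a), I take $G:=\mathcal P_-^\Phi$. It is open, since it is a union of interiors. It is dense by (f). Each $f\in G$ lies in some $\Lip^{\Phi,\subset}_{\{\mu\}}(X,T)$ with $\mu\in\Per(X,T)$. Since $\Max(X,T,\Phi^f)$ is nonempty, the containment $\Max\subset\{\mu\}$ forces $\Max(X,T,\Phi^f)=\{\mu\}$, so $f$ has periodic optimization relative to $\Phi$. The main obstacle, such as it is, is therefore only the validity of \eqref{eq:periodic-locking} with a uniform constant $C_Q$. That is quoted from \cite{BochiZhang2015}, and everything else is set-theoretic bookkeeping with interiors and closures.
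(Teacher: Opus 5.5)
Your proposal is correct and is essentially the paper's proof, which just cites Lemma~\ref{lem:closures}, the inclusions \eqref{eq:P-inclusions}, and the fact that $\mathcal P^\Phi=\interior(\mathcal U_!^\Phi)$ is the largest open subset of $\mathcal U_!^\Phi$. Your cycle (a)$\Rightarrow$(b)$\Rightarrow$(c)$\Rightarrow$(e)$\Rightarrow$(f)$\Rightarrow$(a), with the branch (b)$\Rightarrow$(d)$\Rightarrow$(e), spells this out; closing the loop through $\mathcal P_-^\Phi$ (using $\Max(X,T,\Phi^f)\neq\varnothing$) rather than through $\mathcal P^\Phi$ is an equally valid choice.
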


\begin{proof}
Use Lemma~\ref{lem:closures}, the inclusions
\eqref{eq:P-inclusions}, and the fact that \(\mathcal P^\Phi\) is the largest
open subset of \(\mathcal U_!^\Phi\).
\end{proof}
The next lemma gives a necessary condition for relative TPO. It shows that
every ergodic measure can be approximated in the weak$^*$ topology by
periodic measures.
\begin{lemma}\label{lem:periodic-density}
Suppose that TDS \((X,T)\) has \(\Phi\)-TPO, then \(\Per(X,T)\) is weak$^*$ dense in
\(\E(X,T)\).
\end{lemma}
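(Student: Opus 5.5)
Fix an ergodic measure $\mu\in\E(X,T)$ and a weak$^*$ neighbourhood of $\mu$. I will produce a periodic measure inside it. Since the weak$^*$ topology is generated by finitely many continuous test functions, and $\Lip(X)$ is uniformly dense in $C(X,\mathbb R)$, it suffices to show that for every $g\in\Lip(X)$ and every $\varepsilon>0$ there is $\mu_Q\in\Per(X,T)$ with $\bigl|\int g\,d\mu_Q-\int g\,d\mu\bigr|<\varepsilon$ for finitely many such $g$ simultaneously. Equivalently, $\mu$ lies in the weak$^*$ closure of $\Per(X,T)$. The plan is to make $\mu$ the unique maximizing measure of some point on the leaf, perturb into the open dense set $\mathcal P^\Phi$, and use upper semicontinuity of $\Max$.

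\emph{Step 1: realize $\mu$ as a maximizing measure on the leaf.} By Remark~\ref{rem:cuneo}, choose $h\in C(X,\mathbb R)$ with $\Phi_*(\nu)=\int h\,d\nu$ for all $\nu\in\M(X,T)$. By Jenkinson's theorem \cite{Jenkinson2006Unique}, there is $u\in C(X,\mathbb R)$ with $\Max(u)=\{\mu\}$. Set $f_0:=u-h\in C(X,\mathbb R)$. Then $(\Phi+S_\bullet f_0)_*(\nu)=\int u\,d\nu$, so $\mu$ is the unique maximizer of $\Phi^{f_0}$.

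However, $f_0$ is only continuous. Approximate it uniformly by Lipschitz functions $f_k\to f_0$. For each $k$, use the density of $\mathcal P^\Phi$ in $\Lip(X)$ (the relative $\Phi$-TPO hypothesis) to pick $g_k\in\mathcal U_!^\Phi$ with $\|g_k-f_k\|_{\Lip}<1/k$. In particular $\|g_k-f_0\|_\infty\to0$, and $\Max(X,T,\Phi^{g_k})=\{\mu_{Q_k}\}$ for some periodic orbit $Q_k$.

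\emph{Step 2: upper semicontinuity.} Pass to a subsequence with $\mu_{Q_k}\to\nu$. Since $(\Phi^{g_k})_*\to(\Phi^{f_0})_*$ uniformly on $\M(X,T)$ (the difference is bounded by $\|g_k-f_0\|_\infty$), the argument of Lemma~\ref{lem:closedness} gives $\nu\in\Max(X,T,\Phi^{f_0})=\{\mu\}$. Hence $\mu_{Q_k}\to\mu$, and since the limit is forced along every subsequence, $\mu\in\cl(\Per(X,T))$.

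The main obstacle is Step 1: producing a potential on the leaf whose unique maximizer is the prescribed ergodic $\mu$. The leaf only allows Lipschitz perturbations, while Jenkinson's realization yields a merely continuous $u$. The approach above avoids the issue because density of $\mathcal P^\Phi$ in the $\Lip$-norm gives Lipschitz approximants that are also uniformly close to $f_0$, and the semicontinuity argument only needs uniform closeness. The points to check are that Cuneo's $h$ exists for almost additive $\Phi$ (it does, since almost additive sequences are asymptotically additive) and that Jenkinson's theorem applies to an arbitrary ergodic $\mu$ on a compact metric TDS.
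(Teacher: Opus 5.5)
Your proof is correct and follows essentially the same route as the paper. The paper also realizes $\mu$ as the unique maximizer via Jenkinson's theorem and absorbs $\Phi_*$ into a continuous function via Cuneo. It then approximates the difference uniformly by Lipschitz functions, picks nearby elements of $\mathcal U_!^\Phi$ by density in $\Lip(X)$, and concludes with the same semicontinuity argument that every limit point of the periodic maximizers is $\mu$. Your opening reduction to finitely many test functions is unnecessary, since the sequential convergence you establish already places $\mu$ in the weak$^*$ closure of $\Per(X,T)$.
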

\begin{proof}
	Let $\eta\in\E(X,T)$. By \cite{Jenkinson2006Unique}, there exists
	$h\in C(X,\mathbb{R})$ such that
	$
	\Max(X,T,S_\bullet h)=\{\eta\}.
	$
	i.e.,
	$
	\int h\,d\eta>\int h\,d\nu
	$
	for every $\nu\in\M(X,T)$ with $\nu\neq\eta$.
	
	By Remark~\ref{rem:cuneo}, there exists $f^*\in C(X,\mathbb{R})$ such that
	$
	\Phi_*(\nu)=\int f^*\,d\nu
	$
	for every $\nu\in\M(X,T)$. Hence, for every $f\in\Lip(X)$,
	$(\Phi^f)_*(\nu)
		=
		\Phi_*(\nu)+\int f\,d\nu
		=
		\int(f^*+f)\,d\nu.$	
	Since $h-f^*\in C(X,\mathbb{R})$ and $\Lip(X)$ is uniformly dense in $C(X,\mathbb{R})$, for
	each $k\geq1$, choose $g_k\in\Lip(X)$ such that
	$
	\|g_k-(h-f^*)\|_\infty<\frac1k.
	$
	Since $(X,T)$ has relative $\Phi$-TPO,
	$\mathcal U_!^\Phi$ is dense in $\Lip(X)$. Thus, for each $k\geq1$,
	we can choose
	$
	f_k\in\mathcal U_!^\Phi
	$
	such that
	$
	\|f_k-g_k\|_\infty\leq\|f_k-g_k\|_{\Lip}<\frac1k.
	$
	Therefore,
	\begin{align*}
		\|f^*+f_k-h\|_\infty
		\leq
		\|f_k-g_k\|_\infty
		+
		\|g_k-(h-f^*)\|_\infty
		<\frac2k.
	\end{align*}
	Hence
	$f^*+f_k\to h$ uniformly on $X.$ 
	Since $f_k\in\mathcal U_!^\Phi$, there exists
	$\mu_k\in\Per(X,T)$ such that
	$
	\Max(X,T,\Phi^{f_k})=\{\mu_k\}.
	$
 the measure $\mu_k$
	uniquely maximizes $f^*+f_k$. Thus, for every
	$\nu\in\M(X,T)$,
	\begin{equation*}
		(\Phi^{f_k})_*(\mu_{k})=\int(f^*+f_k)\,d\mu_k
		\geq
		\int(f^*+f_k)\,d\nu.
	\end{equation*}
	
	We now prove that
	$
	\mu_k\longrightarrow\eta
	$. Since $\M(X,T)$ is weak$^*$ compact, let
	$
	\mu_{k_j}\longrightarrow\mu
	$. Taking $j\to\infty$, we have
	$
	\int h\,d\mu\geq\int h\,d\nu
	$
	for every $\nu\in\M(X,T)$. Therefore,
	$
	\mu\in\Max(X,T,S_\bullet h).
	$
	Since $h$ has the unique maximizing measure $\eta$, we have
	$
	\mu=\eta.
	$
	Thus every weak$^*$r limit point of $\{\mu_k\}$ is equal to $\eta$.
	It follows that
	$
	\mu_k\longrightarrow\eta
	$
	in the weak$^*$ topology. Since every $\mu_k$ is periodic, hence
	$\Per(X,T)$ is weak$^*$ dense in $\E(X,T)$.
\end{proof}
\begin{corollary}\label{cor:local-transfer}
Let $(X,T)$ be a TDS, \(Z\subset X\) be closed and invariant.  Let \(U\subset\Lip(X)\) be
non-empty and open, and suppose \(Z\) is maximizable for \(\Phi^f\) for every
\(f\in U\).  If \((Z,T)\) has \((\Phi|Z)\)-TPO, then 
$
 \mathcal P^\Phi\cap U
$
is dense in \(U\).
\end{corollary}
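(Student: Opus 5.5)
The plan is to follow the global argument of Corollary~\ref{x:cor:local-periodic-transfer}, replacing the lifting machinery by the leafwise subsystem reduction Theorem~\ref{thm:subsystem}. It has two stages: first get density of weak periodic optimization inside $U$, then upgrade it to density of $\mathcal P^\Phi$ by leafwise locking.

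For the first stage, apply Theorem~\ref{thm:subsystem} with $\mathcal N=\Per(X,T)$.
\begin{enumerate}
\item We need $\Lip^{\Phi|Z}_{\Per(X,T)\cap\M(Z,T)}(Z,T)$ to be dense in $\Lip(Z)$.
\item A periodic measure of $(X,T)$ supported on $Z$ is exactly a periodic measure of $(Z,T)$. Hence $\Per(X,T)\cap\M(Z,T)=\Per(Z,T)$.
\item So this set is $\mathcal U_{\rm w}^{\Phi|Z}(Z)$, the weak periodic optimization set for the subsystem.
\item By hypothesis $(Z,T)$ has $(\Phi|_Z)$-TPO, so Corollary~\ref{cor:tpo-equivalent} applied to $(Z,T,\Phi|_Z)$ makes this set dense in $\Lip(Z)$.
\end{enumerate}
Note that $\Phi|_Z$ is almost additive on $Z$, so the corollary applies. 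Theorem~\ref{thm:subsystem}, which uses McShane extension, then shows that $\mathcal U_{\rm w}^\Phi\cap U=\Lip^\Phi_{\Per(X,T)}(X,T)\cap U$ is dense in $U$.

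For the second stage, localize the closure equality Lemma~\ref{lem:closures}.
\begin{enumerate}
\item Let $f\in U$ and $\varepsilon>0$.
\item Using the first stage, pick $f'\in\mathcal U_{\rm w}^\Phi\cap U$ with $\|f'-f\|_{\Lip}<\varepsilon/2$.
\item Let $\mu_Q$ be a periodic $\Phi^{f'}$-maximizing measure.
\item Put $f'_t=f'-t\,\dist(\cdot,Q)$. The locking computation in the proof of Lemma~\ref{lem:closures} shows that every $g$ with $|g-f'_t|_{\Lip}<t/(2C_Q)$ satisfies $\Max(X,T,\Phi^{g})=\{\mu_Q\}$.
\item Hence $f'_t\in\mathcal P_-^\Phi\subset\mathcal P^\Phi$.
\item Since $\|f'_t-f'\|_{\Lip}\le t\|\dist(\cdot,Q)\|_{\Lip}\to0$, choose $t$ small so that $f'_t\in U$ and $\|f'_t-f\|_{\Lip}<\varepsilon$.
\end{enumerate}
This shows $\mathcal P^\Phi\cap U$ is dense in $U$.

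Everything here is routine except for one point that needs care. Lemma~\ref{lem:closures} is stated globally, so I would verify that its proof gives the local statement $f'\in\cl(\mathcal P_-^\Phi)$ for each individual $f'\in\mathcal U_{\rm w}^\Phi$; by construction it does. I would also check that the constant $C_Q$ in \eqref{eq:periodic-locking} is available on $X$; the paper cites \cite{BochiZhang2015} for this. Its validity for a general TDS may need $T$ Lipschitz, and I would simply invoke \eqref{eq:periodic-locking} as the paper does.
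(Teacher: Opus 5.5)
Your proposal is correct and follows the paper's argument: Theorem~\ref{thm:subsystem} with $\mathcal N=\Per(X,T)$ and $\Per(X,T)\cap\M(Z,T)=\Per(Z,T)$, then Corollary~\ref{cor:tpo-equivalent} on $Z$ for density of weak periodic optimization, then the locking upgrade inside $U$, which you write out as the localized form of Lemma~\ref{lem:closures} that the paper leaves implicit. Your concern about $C_Q$ is harmless: applying Theorem~\ref{thm:intro-global-locking} to $S_\bullet g$, for which $\|S_\bullet g\|_{\mathcal{AA}_{\rm orb}}\le\|g\|_{\Lip}$, gives the locking estimate with $\|g\|_{\Lip}$ in place of $|g|_{\Lip}$ for any continuous $T$, and that is enough for a $\|\cdot\|_{\Lip}$-ball.
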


\begin{proof}
Apply Theorem~\ref{thm:subsystem} with
\(\mathcal N=\Per(X,T)\), then
\(\mathcal N\cap\M(Z,T)=\Per(Z,T)\).  Corollary~\ref{cor:tpo-equivalent}
gives density of weak periodic optimization on \(Z\) and then shows that $\mathcal P^\Phi$ is dense in $U$.
\end{proof}
We now combine Lemma~\ref{lem:baire} with
Corollary~\ref{cor:local-transfer} to give the proof of Theorem \ref{thm:intro-leafwise-structural}. For each $i\geq1$, relative TPO on
$Z_i$ gives relative periodic optimization on the open set associated with
$Z_i$.

\begin{proof}[Proof of Theorem \ref{thm:intro-leafwise-structural}]
Set
\(U_i:=\interior(\Lip^\Phi_{Z_i}(X,T))\).  Lemma~\ref{lem:baire} says that
\(\bigcup_{i\geq0}U_i\) is dense.  For \(i\geq1\),
Corollary~\ref{cor:local-transfer} gives that
\(\mathcal P^\Phi\cap U_i\) is dense in \(U_i\).  It follows that
$
 U_0\cup\mathcal P^\Phi
$
is dense in \(\bigcup_{i\geq0}U_i\), hence dense in \(\Lip(X)\).
\end{proof}

\begin{definition}
Under the hypotheses of Theorem~\ref{thm:intro-leafwise-structural}, call \(Z_0\) the boundary
of \(\cZ\).  It is \emph{\(\Phi\)-fragile} if
$
 \interior\bigl(\Lip^\Phi_{Z_0}(X,T)\bigr)=\varnothing.
$
\end{definition}

\begin{theorem}[Countable maximizing family with relative TPO]
Suppose \(\cZ=\{Z_i:i\geq1\}\) is a countable \(\Phi\)-leafwise maximizable
family and \((Z_i,T)\) has \((\Phi|Z_i)\)-TPO for every \(i\).  Then
\((X,T)\) has \(\Phi\)-TPO.
\end{theorem}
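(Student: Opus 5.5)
This is the case of Theorem~\ref{thm:intro-leafwise-structural} in which the family has no boundary set, and I will argue it directly. The only structural inputs are the leafwise Baire reduction (Theorem~\ref{lem:baire}) and the local transfer result (Corollary~\ref{cor:local-transfer}).

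First, put $U_i:=\interior\bigl(\Lip^\Phi_{Z_i}(X,T)\bigr)$ for $i\geq1$. Since $\cZ$ is countable and $\Phi$-leafwise maximizable, the sets $\Lip^\Phi_{Z_i}(X,T)$ cover $\Lip(X)$ by \eqref{eq:leaf-cover}. Each of them is closed by Lemma~\ref{lem:closedness}. Theorem~\ref{lem:baire} therefore gives that $\bigcup_{i\geq1}U_i$ is open and dense in $\Lip(X)$.

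Second, fix $i\geq1$ with $U_i\neq\varnothing$. Every $f\in U_i$ makes $Z_i$ maximizable for $\Phi^f$. By hypothesis $(Z_i,T)$ has $(\Phi|_{Z_i})$-TPO. So Corollary~\ref{cor:local-transfer} applies with $U=U_i$ and $Z=Z_i$, and yields that $\mathcal P^\Phi\cap U_i$ is dense in $U_i$. Here $\mathcal P^\Phi=\interior(\mathcal U_!^\Phi)$. The real content sits inside that corollary: the McShane extension step of Theorem~\ref{thm:subsystem} moves perturbations from $\Lip(Z_i)$ to $\Lip(X)$ with no increase in norm. Then the periodic locking of Lemma~\ref{lem:closures} turns density of weak periodic optimization into density of the open class $\mathcal P^\Phi$.

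Third, I combine the pieces. Let $V\subset\Lip(X)$ be nonempty and open. Since $\bigcup_{i}U_i$ is dense, $V\cap U_i\neq\varnothing$ for some $i\geq1$. This intersection is a nonempty open subset of $U_i$, so it meets $\mathcal P^\Phi$ by the second step. Hence $\mathcal P^\Phi$ is dense in $\Lip(X)$, and it is open by definition. Every $f\in\mathcal P^\Phi$ has $\Max(X,T,\Phi^f)=\{\mu_Q\}$ for some periodic orbit $Q$, so $\mathcal P^\Phi$ is the open dense set required for relative $\Phi$-TPO. Alternatively, once density of $\mathcal P^\Phi$ is known, Corollary~\ref{cor:tpo-equivalent}(b)$\Rightarrow$(a) finishes the proof.

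I expect the main obstacle to be the hypothesis check in Corollary~\ref{cor:local-transfer}, specifically that subsystem TPO gives density of weak periodic optimization on $Z_i$ in the topology of $\Lip(Z_i)$. Corollary~\ref{cor:tpo-equivalent} must be applied on the subsystem $(Z_i,T)$ with the sequence $\Phi|_{Z_i}$, which is again almost additive. Also, periodic measures on $Z_i$ are exactly the elements of $\Per(X,T)\cap\M(Z_i,T)$. Both facts are immediate, so I expect no genuine difficulty beyond invoking the earlier results in the right order.
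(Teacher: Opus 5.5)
Your proposal is correct and follows the paper's route. The paper proves this by applying Theorem~\ref{thm:intro-leafwise-structural} with empty boundary, and that theorem's proof is exactly your combination of the leafwise Baire reduction (Theorem~\ref{lem:baire}) with the local transfer Corollary~\ref{cor:local-transfer} on each $U_i=\interior(\Lip^\Phi_{Z_i}(X,T))$; you simply unfold it to conclude that $\mathcal P^\Phi$ is open and dense.
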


\begin{proof}
Apply Theorem~\ref{thm:intro-leafwise-structural} with empty boundary.
\end{proof}

The relative subsystem hypotheses become classical TPO hypotheses under a
useful realization condition.

\begin{corollary}\label{cor:lip-realization}
Let $(X,T)$ be a TDS with an almost additive sequence
 $\Phi=\{\phi_n\}_{n\geq1}$. Let \(Z\subset X\) be closed and invariant.  Suppose there exists
\(f_0\in\Lip(Z)\) such that
$
 \lim_{n\to\infty}\frac1n
 \|\phi_n|_Z-S_nf_0\|_{\infty,Z}=0.
$ 
If \((Z,T)\) has classical Lipschitz TPO, then it has
\((\Phi|Z)\)-TPO.
\end{corollary}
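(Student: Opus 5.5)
The idea is to show that the functionals on the restricted leaf coincide with those of the classical additive Lipschitz problem on $Z$, after the translation $f\mapsto f_0+f$. That translation is an affine isometry of $\Lip(Z)$, so it preserves open dense sets.

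First, fix $f\in\Lip(Z)$ and $\mu\in\M(Z,T)$. Since $\frac1n\|\phi_n|_Z-S_nf_0\|_{\infty,Z}\to0$, integrating against $\mu$ and using $\int S_nf_0\,d\mu=n\int f_0\,d\mu$ gives $(\Phi|_Z)_*(\mu)=\int f_0\,d\mu$. Hence
$$
\bigl((\Phi|_Z)^f\bigr)_*(\mu)=\int (f_0+f)\,d\mu
\qquad\text{for all }\mu\in\M(Z,T),
$$
and consequently
$$
\Max\bigl(Z,T,(\Phi|_Z)^f\bigr)=\Max\bigl(Z,T,S_\bullet(f_0+f)\bigr),
$$
where the right-hand side is the classical set of maximizing measures of the Lipschitz function $f_0+f$ on $Z$. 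Note that $\Phi|_Z$ is still almost additive on $Z$, with the same constant $C_\Phi$, because $Z$ is invariant. So the relative objects on $Z$ are well defined.

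Second, let $G\subset\Lip(Z)$ be the open dense set provided by classical Lipschitz TPO of $(Z,T)$: every $g\in G$ has $\Max(Z,T,S_\bullet g)=\{\mu_Q\}$ for some periodic orbit $Q\subset Z$. Define $\mathcal P:=G-f_0=\{g-f_0:g\in G\}$. The map $\tau(f)=f+f_0$ is a translation of the Banach space $\Lip(Z)$, hence a homeomorphism, so $\mathcal P=\tau^{-1}(G)$ is open and dense in $\Lip(Z)$. For $f\in\mathcal P$ the displayed identity gives $\Max(Z,T,(\Phi|_Z)^f)=\Max(Z,T,S_\bullet(f_0+f))=\{\mu_Q\}$. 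This is exactly relative $(\Phi|_Z)$-TPO.

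The proof is essentially formal. The only point needing care is the first step: one must check that the sublinear error disappears in $\Phi_*$ uniformly enough. This is immediate, since the bound $\frac1n|\int(\phi_n-S_nf_0)\,d\mu|\le\frac1n\|\phi_n|_Z-S_nf_0\|_{\infty,Z}$ holds for every $\mu$. One should also record that $f_0\in\Lip(Z)$ is needed precisely so that $\tau$ maps $\Lip(Z)$ to itself. Were $f_0$ only continuous, the translation would leave the Lipschitz space, and the argument would fail.
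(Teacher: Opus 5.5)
Your proposal is correct and follows the paper's own argument. The paper likewise identifies $\Max\bigl(Z,T,(\Phi|_Z)^h\bigr)$ with $\Max(Z,T,f_0+h)$ and transports the classical open dense TPO set through the translation $h\mapsto f_0+h$, a homeomorphism of $\Lip(Z)$; you also write out the verification of $(\Phi|_Z)_*(\mu)=\int f_0\,d\mu$, which the paper leaves implicit.
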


\begin{proof}
For every \(h\in\Lip(Z)\),
$
 \Max\bigl(Z,T,(\Phi|Z)^h\bigr)
 =\Max(Z,T,f_0+h).
$
Translation \(h\mapsto f_0+h\) is a homeomorphism of \(\Lip(Z)\), Thus it preserves open
dense sets in $\Lip(Z)$. The classical TPO set therefore gives an open dense
relative TPO set. 
\end{proof}

\section{A relative TPO example}

We give a matrix example of relative TPO. In this example, the matrix
family is fixed, while the function $f$ varies in the whole space
$\Lip(\Sigma_k)$. Thus, the result is a TPO statement on one fixed
Lipschitz leaf.

	Let $k\geq2$ and let
	$
	\Sigma_k:=\{1,\ldots,k\}^{\mathbb N_0}
	$
	be the one-sided full shift with the shift map $\sigma$. Fix
	$0<\theta<1$ and use the metric
	$$
	d_\theta(x,y)
	:=
	\theta^{\min\{j\geq0:x_j\neq y_j\}}
	$$
	for $x\neq y$, and set $d_\theta(x,x)=0$.
	
	Let
	$
	C=(c_{ij})_{1\leq i,j\leq k}
	$
	be a matrix with $c_{ij}>0$. For $1\leq i\leq k$, define
	$$
	u_i:=e_i\in\mathbb R^k,
	\qquad
	v_i:=(c_{i1},\ldots,c_{ik})^{\mathsf T},
	\qquad
	A_i:=u_iv_i^{\mathsf T}.
	$$
	Then
	$
	v_i^{\mathsf T}u_j=c_{ij}>0.
	$
	For $x=(x_j)_{j\geq0}\in\Sigma_k$, set
	$$
	A^{(n)}(x):=A_{x_{n-1}}\cdots A_{x_0},
	\qquad
	\phi_n^C(x):=\log\|A^{(n)}(x)\|,
	$$
	where $\|\cdot\|$ is the Euclidean operator norm. Write
	$
	\Phi_C:=\{\phi_n^C\}_{n\geq1}.
	$
	\begin{proposition}
	$\Phi_C$ is an orbit Lipschitz almost additive sequence. Moreover,
	$(\Sigma_k,\sigma)$ has relative $\Phi_C$-TPO. Hence there exists an open
	dense set
	$$
	G\subset\Lip(\Sigma_k)
	$$
	such that, for every $f\in G$, there is a periodic orbit $Q$
	satisfying
	$$
	\M_{max}\bigl(\Sigma_k,\sigma,(\Phi_C)^f\bigr)=\{\mu_Q\}.
	$$
\end{proposition}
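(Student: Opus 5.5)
The plan is to compute $\phi_n^C$ explicitly, using the rank-one structure. This shows that $\Phi_C$ differs from the Birkhoff sums of a single locally constant (hence Lipschitz) function by a bounded error. Relative TPO will then follow from Corollary~\ref{cor:lip-realization} together with the classical Lipschitz TPO of the full shift.

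\emph{Step 1 (explicit formula).} Since $A_i=u_iv_i^{\mathsf T}$ and $v_i^{\mathsf T}u_j=c_{ij}$, telescoping the product gives
\begin{equation*}
A^{(n)}(x)=\Bigl(\prod_{j=0}^{n-2}c_{x_{j+1}x_j}\Bigr)\,u_{x_{n-1}}v_{x_0}^{\mathsf T}.
\end{equation*}
For a rank-one matrix, $\|uv^{\mathsf T}\|=\|u\|\,\|v\|$, and here $\|u_{x_{n-1}}\|=1$. Put $g(x):=\log c_{x_1x_0}$ and $w(x):=\log\|v_{x_0}\|$. Then
\begin{equation*}
\phi_n^C=S_{n-1}g+w .
\end{equation*}
Both $g$ and $w$ are locally constant, depending on at most two coordinates. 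If $x_0=y_0$ and $x_1=y_1$ then $g(x)=g(y)$; otherwise $d_\theta(x,y)\ge\theta$. Hence $|g|_{\Lip}\le 2\|g\|_\infty/\theta$, and similarly for $w$. So $g,w\in\Lip(\Sigma_k)$.

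\emph{Step 2 (membership in $\mathcal{AA}_{\rm orb}$ and non-additivity).} From the formula,
\begin{equation*}
\phi^C_{n+m}-\phi^C_n-\phi^C_m\circ\sigma^n=g\circ\sigma^{n-1}-w\circ\sigma^n ,
\end{equation*}
which is bounded by $\|g\|_\infty+\|w\|_\infty$. This gives almost additivity.

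Next, $|S_{n-1}g(x)-S_{n-1}g(y)|\le |g|_{\Lip}\,d_n^{(1)}(x,y)$ and $|w(x)-w(y)|\le|w|_{\Lip}\,d(x,y)\le |w|_{\Lip}\,d_n^{(1)}(x,y)$. Hence $L_{\rm orb}(\Phi_C)<\infty$.

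Also $\phi_n^C-S_ng=w-g\circ\sigma^{n-1}$ is uniformly bounded. So $\Phi_C$ is asymptotically additive and lies in $\mathcal{AA}_{\rm orb}(\Sigma_k,\sigma)$.

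For non-additivity, $\phi_2^C-\phi_1^C-\phi_1^C\circ\sigma=g-w\circ\sigma$. Its value at $x$ is $\log c_{x_1x_0}-\log\|v_{x_1}\|$. This is strictly negative, because $v_{x_1}$ has $k\ge2$ positive entries, so its Euclidean norm strictly exceeds each entry $c_{x_1x_0}$.

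\emph{Step 3 (relative TPO).} By Step 2, $\frac1n\|\phi_n^C-S_ng\|_\infty\le (\|g\|_\infty+\|w\|_\infty)/n\to0$. This is the realization condition of Corollary~\ref{cor:lip-realization} with $Z=\Sigma_k$ and $f_0=g\in\Lip(\Sigma_k)$.

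The full shift with the metric $d_\theta$ is an open distance-expanding map: $\sigma$ expands distances by $\theta^{-1}$ on cylinders of length one and is open. So Contreras' theorem \cite{Contreras2016} gives classical Lipschitz TPO on $\Lip(\Sigma_k)$. (Alternatively, one can take the trivial family $\{\Sigma_k\}$ and use \cite{HuangJenkinsonXuZhang2026}.)

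Corollary~\ref{cor:lip-realization} then yields relative $\Phi_C$-TPO. Explicitly, we can take $G:=\mathcal P_{\rm cl}-g$, where $\mathcal P_{\rm cl}$ is the classical open dense TPO set. For $f\in G$ we have $\Max(\Sigma_k,\sigma,(\Phi_C)^f)=\Max(\Sigma_k,\sigma,g+f)=\{\mu_Q\}$.

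\emph{Expected obstacle.} The only genuinely delicate points are the rank-one telescoping identity (with the correct index order $c_{x_{j+1}x_j}$) and the strict inequality $\|v_i\|>c_{ij}$, which is needed for non-additivity. Everything else reduces directly to the earlier corollary and the classical theorem.
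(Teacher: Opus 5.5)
Your proposal is correct and follows essentially the same route as the paper: the rank-one telescoping formula, a bounded almost-additivity defect, orbit Lipschitz bounds coming from locally constant terms, and reduction to classical Lipschitz TPO of the full shift via the translation $f\mapsto g+f$, where $g(x)=\log c_{x_1x_0}$ is the paper's $G_C$. The differences are cosmetic: you write the decomposition as $\phi_n^C=S_{n-1}g+w$ and invoke Corollary~\ref{cor:lip-realization} instead of doing the translation inline, and you prove non-additivity inside the argument, whereas the paper does so in the remark after the proof.
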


\begin{proof}
	Since $A_i=u_iv_i^{\mathsf T}$ and
	$v_i^{\mathsf T}u_j=c_{ij}$, we have
	$$
	A^{(n)}(x)
	=
	u_{x_{n-1}}
	\left(
	\prod_{j=1}^{n-1}c_{x_jx_{j-1}}
	\right)
	v_{x_0}^{\mathsf T}.
	$$
	Therefore,
	\begin{equation}\label{eq:leaf-matrix-formula}
		\phi_n^C(x)
		=
		\log\|u_{x_{n-1}}\|
		+\log\|v_{x_0}\|
		+\sum_{j=1}^{n-1}\log c_{x_jx_{j-1}}.
	\end{equation}
	It follows that
	\begin{align*}
		\phi_{n+m}^C(x)
		-\phi_n^C(x)
		-\phi_m^C(\sigma^nx)=\log
		\frac{c_{x_nx_{n-1}}}
		{\|v_{x_n}\|\|u_{x_{n-1}}\|}.
	\end{align*}
	The right-hand side takes only finitely many values. Hence there exists
	$C_{\Phi_C}>0$ such that
	$$
	\left|
	\phi_{n+m}^C(x)
	-\phi_n^C(x)
	-\phi_m^C(\sigma^nx)
	\right|
	\leq C_{\Phi_C}.
	$$
	Thus $\Phi_C$ is almost additive.
	
	Since the alphabet is finite, there exists $L_C>0$ such that
	$$
	|\phi_n^C(x)-\phi_n^C(y)|
	\leq
	L_C\sum_{j=0}^{n-1}
	\boldsymbol{1}_{\{x_j\neq y_j\}}.
	$$
	Moreover,
	$
	\boldsymbol{1}_{\{x_j\neq y_j\}}
	\leq d_\theta(\sigma^jx,\sigma^jy).
	$
	Consequently,
	$
	|\phi_n^C(x)-\phi_n^C(y)|
	\leq
	L_C\sum_{j=0}^{n-1}
	d_\theta(\sigma^jx,\sigma^jy).
	$
	Therefore, $\Phi_C$ is orbit Lipschitz.
	
	Define
	$$
	G_C(x):=\log c_{x_1x_0}.
	$$
	The function $G_C$ depends only on $x_0$ and $x_1$, so
	$G_C\in\Lip(\Sigma_k)$. By \eqref{eq:leaf-matrix-formula},
	$$
	\phi_n^C(x)-S_nG_C(x)
	=
	\log\|u_{x_{n-1}}\|
	+\log\|v_{x_0}\|
	-\log c_{x_nx_{n-1}}.
	$$
	The right-hand side is uniformly bounded. Hence
	$$
	\lim_{n\to\infty}
	\frac1n
	\|\phi_n^C-S_nG_C\|_\infty
	=0.
	$$
	For every $\mu\in\M(\Sigma_k,\sigma)$ and every
	$f\in\Lip(\Sigma_k)$, we therefore have
	\begin{align*}
		\bigl((\Phi_C)^f\bigr)_*(\mu)
		=
		(\Phi_C)_*(\mu)+\int f\,d\mu
		=
		\int(G_C+f)\,d\mu.
	\end{align*}
	Thus
	\begin{equation}\label{eq:matrix-max-equality}
		\M_{max}\bigl(\Sigma_k,\sigma,(\Phi_C)^f\bigr)
		=
		\M_{max}\bigl(\Sigma_k,\sigma,S_\bullet(G_C+f)\bigr).
	\end{equation}
	
	The full shift $(\Sigma_k,\sigma)$ has classical Lipschitz TPO; see
	\cite{Contreras2016,HuangJenkinsonXuZhang2026}. Let
	$\mathcal G\subset\Lip(\Sigma_k)$ be the corresponding open dense set.
	Since $G_C\in\Lip(\Sigma_k)$, the set
	$$
	G
	:=
	\{f\in\Lip(\Sigma_k):G_C+f\in\mathcal G\}
	$$
	is open and dense in $\Lip(\Sigma_k)$. For every $f\in G$,
	the right-hand side of \eqref{eq:matrix-max-equality} consists of one
	periodic measure. Therefore, $(\Sigma_k,\sigma)$ has relative
	$\Phi_C$-TPO.
\end{proof}

\begin{remark}
	The matrix $C$ is fixed in this example. The open dense set is taken in
	$\Lip(\Sigma_k)$, and each $f\in\Lip(\Sigma_k)$ gives the sequence
	$$
	(\Phi_C)^f=\Phi_C+S_\bullet f.
	$$
	Thus this is a leafwise TPO result. 	
	Also, $\Phi_C$ is not additive. Indeed, since $k\geq2$ and all entries of
	$C$ are positive,
	$
	\|v_i\|>c_{ij}
	$
	for every $i,j$. Hence the almost-additivity difference above is not zero.
	Thus the example is given by a non-additive matrix sequence.
\end{remark}

\section*{Acknowledgement}

The first author was supported by the National Natural Science Foundation of
China (No.~12271256). The second author was supported by the National Natural
Science Foundation of China (No.~12471184). The third author was supported by
the National Natural Science Foundation of China (No.~11971236) and the Qinglan
Project of Jiangsu Province of China.

\section*{Data availability}

No data was used for the research described in the article.

\section*{Conflict of interest}

The authors declare no conflict of interest.

\printauthorcontact

\end{document}